\documentclass[11pt]{article}

\usepackage{geometry}

\geometry{left=2.2cm, right=2.2cm, top=2.2cm, bottom=2.5cm}

\usepackage{amsmath}
\usepackage{amssymb}
\usepackage{amsthm}
\usepackage{graphicx}
\usepackage{authblk}
\usepackage{color}
\usepackage{setspace}

\usepackage{paralist}

\usepackage{hyperref}
\hypersetup{
  colorlinks=false,
  linkcolor=cyan,
  citecolor=green,
  filecolor=blue,
  urlcolor=red,
}
\hypersetup{hidelinks}

\allowdisplaybreaks[4]


\pagestyle{plain}
\makeatletter
\@addtoreset{equation}{section}
\makeatother

\newtheorem{lemma}{Lemma}[section]
\newtheorem{proposition}{Proposition}[section]
\makeatletter
\renewcommand{\section}{\@startsection{section}{1}{0mm}
 {0.7\baselineskip}{0.5\baselineskip}{\Large\bf\leftline}}
\makeatother

\newtheorem{theorem}{Theorem}[section]

\theoremstyle{definition}

\newtheorem{remark}{Remark}

\begin{document}
\title{\bf Large-time behavior of solutions to the outflow problem for the compressible Navier-Stokes-Maxwell equations} \vskip 0.5cm

\author{Huancheng Yao, Changjiang Zhu\thanks{Corresponding author.
\authorcr E-mail addresses: mayaohch@mail.scut.edu.cn (Yao), machjzhu@scut.edu.cn (Zhu).}}
\affil{ \normalsize  School of Mathematics, South China University of Technology, \authorcr Guangzhou 510641, P.R. China }

\date{}
\maketitle

\begin{spacing}{1.24}

\begin{abstract}

We investigate the large-time behavior of solutions toward the combination of the boundary layer and 3-rarefaction waves to the outflow problem for the compressible non-isentropic Navier-Stokes equations coupling with the Maxwell equations through the Lorentz force (called the Navier-Stokes-Maxwell equations) on the half line $ \mathbb{R}_+ $.
It includes the electrodynamic effects into the dissipative structure of the hyperbolic-parabolic system and turns out to be more complicated than that in the simpler compressible Navier-Stokes equations.
We prove that this typical composite wave pattern is time-asymptotically stable with the composite boundary condition of the electromagnetic fields,
under some smallness conditions and the assumption that the dielectric constant is bounded.
This can be viewed as the first result about the nonlinear stability of the combination of two different wave patterns for the IBVP of the non-isentropic Navier-Stokes-Maxwell equations.

\vspace{3mm}

{\bf 2021 Mathematics Subject Classification:}  35Q30, 76N06, 76N30, 35Q61.

\vspace{2mm}

{\bf Keywords:} Non-isentropic Navier-Stokes-Maxwell equations, electromagnetic fields, dielectric constant, boundary layer, rarefaction wave.

\end{abstract}

\begin{spacing}{0.90}
\tableofcontents
\end{spacing}

\section{Introduction}

\noindent Plasma dynamics is a field of studying flow problems of electrically conducting fluids.
A complete analysis in this broad field includes the study of the gasdynamic field, the electromagnetic fields and the radiation field simultaneously in \cite{Pai1962}.
In this paper, we consider the motion of an electrically conducting fluid in the presence of electric field and  magnetic field.
At the macroscopic level, the flow of this electrically conducting fluid such as the movement in the electromagnetic fields generated by itself is described by hydrodynamics equations,
for example the compressible Navier-Stokes equations.
Since the dynamic motion of the fluid and the electromagnetic fields couple strongly,
the governing system in the non-isentropic case is derived from fluid mechanics with appropriate modifications to take account of the electromagnetic effects,
which consists of the laws of conservation of mass, momentum and energy, Maxwell's law, and the law of conservation of electric charge (see \cite{Imai}, \cite{Kawashima1}).
In this paper, we shall restrict ourselves to the one-dimensional motion (see \cite{fanhu}, \cite{yaozhu2021}) on the half line $ \mathbb{R}_+ $:
\begin{equation}\label{yuanfangcheng}
\left\{
\begin{aligned}
   &\rho_t + (\rho u)_x = 0,   \\
   &\rho (u_t + uu_x ) + p_x = \mu u_{xx} - (E + u b)b,   \\
   &\frac{R}{\gamma-1} \rho (\theta_t  + u \theta_x ) + p u_x = \mu u_x^2  +  \kappa \theta_{xx}  + (E + u b)^2,   \\
   &\varepsilon E_t  - b_x + E + u b =0,  \\
   &b_t - E_x =0,
\end{aligned}
\right.
\end{equation}
where $(x, t)\in\mathbb{R}_{+}\times\mathbb{R}_{+}$.
The detailed mathematical derivation of system \eqref{yuanfangcheng} will be given in Appendix.
Here, $\rho(x,t)>0$ denotes the mass density;
$u(x,t)$ is the fluid velocity;
$ \theta(x,t) > 0 $ is the absolute temperature;
$E(x,t)$ and $b(x,t)$ denote the electric field and the magnetic field, respectively.
The pressure $ p $ is expressed by the equations of states.
For the sake of simplicity, we will focus on only polytropic fluids throughout this paper, namely
\begin{equation}\label{zhuangtaifangcheng-p}
p = R \rho \theta = A \rho^{\gamma} \mathrm{exp}\left( \frac{\gamma-1}{R} s \right),
\end{equation}
where $ s $ is the entropy.
The parameters in the above equations, respectively,
$ R >0 $ is the gas constant and $ \gamma>1 $ is the adiabatic exponent;
$\mu >0 $ in $\eqref{yuanfangcheng}_2$ and $\eqref{yuanfangcheng}_3$ is the viscosity coefficient of the fluid;
the heat conductivity coefficient $\kappa$ in $\eqref{yuanfangcheng}_3$ is assumed to be a positive constant.
Moreover, $\varepsilon>0$ in $\eqref{yuanfangcheng}_4$ denotes the dielectric constant.

The system \eqref{yuanfangcheng} is obtained from the Navier-Stokes equations coupling with the Maxwell equations through the Lorentz force.
Thus it is usually called the Navier-Stokes-Maxwell equations.
Notice that the same terminology was used by Masmoudi in \cite{masmoudi-jmpa-2010} and Duan in \cite{duan-aa-2012} but for the different models.
In this paper, we consider the initial-boundary value problem for the compressible non-isentropic Navier-Stokes-Maxwell equations on a half line.
The initial data for the system \eqref{yuanfangcheng} is given by
\begin{equation}\label{yuanfangcheng-chuzhi}
  (\rho, u, \theta, E, b)(x,0) = (\rho_0, u_0, \theta_0, E_0, b_0)(x), \quad  \inf_{x \in \mathbb{R}_+} \rho_0 (x) >0, \quad  \inf_{x \in \mathbb{R}_+} \theta_0 (x) >0 .
\end{equation}
We assume that the initial data in the far field $ x =+\infty $ is constant, namely
\begin{equation*}
  \lim_{x\rightarrow +\infty} (\rho_0, u_0, \theta_0, E_0, b_0)(x) = (\rho_+, u_+, \theta_+, E_+, b_+),
\end{equation*}
and the boundary data for $ u $ and $ \theta $ at $x = 0$ is given by the following constants
\begin{equation*}
  ( u, \theta )(0,t) = ( u_-, \theta_- ), \quad \forall \; t \geq 0,
\end{equation*}
where $ \rho_+ > 0 $, $ \theta_\pm > 0 $ and $ u_- < 0 $.
The following compatibility conditions hold as well
\begin{equation*}
  u_0(0)= u_-,  \quad \theta_0(0) = \theta_-.
\end{equation*}
In particular, we suppose the boundary values for $ E $ and $ b $ satisfy the following condition:
\begin{equation}\label{bdycons-diancichang}
 \left( \sqrt{\varepsilon} E - b \right)(0,t)  =  0 .
\end{equation}

Actually, setting $ V = \left( \varepsilon E , \; b  \right)^T  $,
equations $\eqref{yuanfangcheng}_4 $ and $\eqref{yuanfangcheng}_5 $ can be transformed as the following form
\begin{align}\label{tiboundary-yuanfangcheng}
  V_t  +  A V_x  +  B V =0, \quad \mathrm{for} \quad
  A =
  \left(\begin{array}{cc}
  \displaystyle  0      \;   &     \displaystyle  -1       \\[2mm]
  \displaystyle    -1/\varepsilon  \;    &     \displaystyle 0
  \end{array}\right),
  \quad        B =
              \left(\begin{array}{cc}
              \displaystyle  1/\varepsilon     \;    &   \displaystyle  u       \\[2mm]
              \displaystyle  0    \;   &    \displaystyle  0
              \end{array}\right).
\end{align}
Here Matrix $ A $ has two eigenvalues $ \lambda_1 = 1/\sqrt{\varepsilon} $ and $ \lambda_2 = - 1/\sqrt{\varepsilon} $.
Direct calculations show that the pair of Riemann invariants $\left\{ W_1,  \;  W_2 \right\}$ associated with the eigenvalues $ \lambda_1 $ and $ \lambda_2 $ can be taken as
\begin{equation}\label{tiboundary-riemann-inv}
  \left\{ W_1,  \;  W_2 \right\}    =    \frac{\sqrt{\varepsilon}}{2} \left\{\sqrt{\varepsilon} E - b,  \;  \sqrt{\varepsilon} E + b \right\} .
\end{equation}
Using this pair of Riemann invariants, we can diagonalize the equations in \eqref{tiboundary-yuanfangcheng} as
\begin{equation*}
  W_t + \Lambda W_x + D W = 0,
\end{equation*}
where $ W : = \left( W_1 , \;   W_2  \right)^T $ and $ \Lambda := \mathrm{diag}(\lambda_1, \; \lambda_2) $.
The basic theory of hyperbolic systems of conservation laws (for example, see \cite{Smoller}) shows that
we must specify the boundary value $ W_1(0,t) $ since $ \lambda_1 > 0 $;
and $ W_2(0,t) $ is determined by the initial data since $ \lambda_2 < 0 $.
Now each $ W_i\;(i=1\;\mathrm{or}\; 2) $ is a linear combination of
the $ V_i\;(i = 1\;\mathrm{and}\;2) $, so we must specify only one condition on the components
of $ V $ at the boundary $ x = 0 $.
Due to \eqref{tiboundary-riemann-inv},
we specify the boundary value on $ \sqrt{\varepsilon} E - b $ satisfying \eqref{bdycons-diancichang}.
It should be pointed out that this type of boundary condition has ever been mentioned by Chen-Jerome-Wang in \cite{chen-wang-2000}.


Let us recall some known results about the Navier-Stokes-Maxwell equations.
There have been some research on the existence and large-time behavior of solutions,
and the vanishing dielectric constant limit problem to the compressible Navier-Stokes-Maxwell equations.
In \cite{Kawashima2} and \cite{Kawashima3}, Kawashima and Shizuta established the global existence of smooth solutions for small data and studied its zero dielectric constant limit in the whole space $ \mathbb{R}^2 $.
Li and Mu \cite{li-mu-jmaa-2014} studied the low Mach number limit problem for the solution of the full compressible Navier-Stokes-Maxwell equations converging to that of the incompressible system in $ \mathbb{R}^3 $.
Later, Jiang and Li in \cite{jiangsong1} studied the vanishing dielectric constant limit and obtained the convergence of the 3-D Navier-Stokes-Maxwell equations to the full compressible magnetohydrodynamic equations in the torus $ \mathbb{T}^3 $.
Recently, Xu in \cite{xux} studied the large-time behavior of the classical solution toward some given constant states and obtained the time-decay estimates in the whole space $ \mathbb{R}^3 $ with small initial perturbation in $ H^3\cap L^1 $.
For the one-dimensional non-isentropic model, Fan and Hu in \cite{fanhu} obtained the uniform estimates with respect to the dielectric constant and the global-in-time existence in a bounded interval without vacuum.
Furthermore, Fan and Ou in \cite{fanou} considered the one-dimensional full equations for a thermo-radiative electromagnetic fluid in a form similar to that in \eqref{yuanfangcheng};
and established the similar result to \cite{fanhu}.

However, for the one-dimensional compressible Navier-Stokes-Maxwell equations,
there are few results about the large-time behavior of the solution toward some non-constant states, especially wave patterns. 
To the authors' best knowledge, there are only four relevant results.
To the Cauchy problem, Luo-Yao-Zhu \cite{luoyaozhu2021} and Yao-Zhu \cite{yaozhu2021} established the stability of rarefaction wave for the compressible isentropic and non-isentropic Navier-Stokes-Maxwell equations under suitable smallness conditions, respectively.
Huang-Liu in \cite{huangyt} consider the stability of rarefaction wave for a macroscopic model derived from the Vlasov-Maxwell-Boltzmann system,
in which the model they consider is obviously different from this in our paper,
except for the similar dissipative term $E + ub$.
Recently, Yao-Zhu in \cite{yaozhu2021-fuhebo} study the asymptotic stability of the superposition of viscous contact wave with rarefaction waves for the compressible Navier-Stokes-Maxwell equations,
which is the first result on the combination of two different wave patterns of this complex coupled model.
But for the large-time behavior of solutions to an IBVP of the non-isentropic Navier-Stokes-Maxwell equations,
as far as we know there are still few results.
Here, we will partly give a positive answer for this important problem.

In fact, equations \eqref{yuanfangcheng} reduce to the classical Navier-Stokes equations if we ignore the effects of the electromagnetic fields.
Motivated by the relationship between the Navier-Stokes-Maxwell equations and Navier-Stokes equations,
we temporarily assume that $ E_+ = b_+  = 0 $, namely, the initial-boundary values satisfying
\begin{equation}\label{yuanfangcheng-chuzhi-shuju}
  \lim_{x\rightarrow +\infty} (\rho_0, u_0, \theta_0, E_0, b_0)(x) = (\rho_+, u_+, \theta_+, 0, 0),
\end{equation}
\begin{equation}\label{yuanfangcheng-bianzhi}
  (u, \theta, \sqrt{\varepsilon} E - b)(0,t) = (u_-, \theta_-, 0), \quad \forall \; t \geq 0 ,
\end{equation}
and can consider the large-time behavior of solutions to the outflow
problem \eqref{yuanfangcheng}-\eqref{yuanfangcheng-chuzhi} and \eqref{yuanfangcheng-chuzhi-shuju}-\eqref{yuanfangcheng-bianzhi} in the setting of $ E(x,t) = b(x,t) = 0 $.
Then the above outflow problem is reduced to consider the corresponding outflow problem of the Navier-Stokes equations:
\begin{equation*}
\left\{
\begin{aligned}
   &\rho_t + (\rho u)_x = 0,   \\
   &\rho (u_t + uu_x ) + p_x = \mu u_{xx} ,   \\
   &\frac{R}{\gamma-1} \rho (\theta_t  + u \theta_x ) + p u_x = \mu u_x^2  +  \kappa \theta_{xx}  ,
\end{aligned}
\right.
\end{equation*}
with the initial-boundary values
\begin{equation*}
  (\rho, u, \theta)(x, 0) = (\rho_0, u_0, \theta_0)(x) \rightarrow (\rho_+, u_+, \theta_+), \quad \mathrm{as} \quad x \rightarrow +\infty,
\end{equation*}
\begin{equation*}
  (u, \theta)(0,t) = (u_-, \theta_-), \quad \forall \;  t \geq 0.
\end{equation*}
Hence, under the assumption $ E_+ = b_+ =  0 $, when time tends to infinity, it is reasonable for us to expect that the solutions to the outflow problem \eqref{yuanfangcheng}-\eqref{yuanfangcheng-chuzhi} and \eqref{yuanfangcheng-chuzhi-shuju}-\eqref{yuanfangcheng-bianzhi} asymptotically converge to the profiles the same as that of the Navier-Stokes equations.
Moreover, the cases for $ E_+ \neq 0 $ and $ b_+ \neq 0 $ which lead to more complex structures are left for study in future.

In the past three decades, there have been many works on the large-time behavior of solutions to the Cauchy problem of 1-D compressible Navier-Stokes equations (including its isentropic case) with some end constant states at far fields $ x = \pm \infty $ of initial data.
We refer interested readers to \cite{kawashima-matsumura-1985, Kawashima1986-feidengshang, liu-1986, liu-xin-1988, matsumura-nishihara-cmp-1992, nishihara-yang-zhao-2004, huang-matsumura-xin-2006, huang-xin-yang-2008, huang-matsumura-2009, duan-liu-zhao-2009, huang-li-matsumura-2010} and some references therein.
The above literatures show that the large-time behavior of solutions to the Cauchy problem with the far field constant states of initial data is basically governed by its corresponding Riemann solutions to the compressible Euler equations,
just as contact discontinuity and shock wave are replaced by the corresponding viscous contact wave and (shifted) viscous shock wave, respectively.

However, for the large-time behavior of solutions to an IBVP of the Navier-Stokes equations, there exists a different wave phenomenon from the Cauchy problem.
In fact, the authors of \cite{liu-matsumura-nishihara-1998, liu-nishihara-1997, liu-yu-1997} found a new wave phenomenon while studying the IBVP for scalar viscous conservation law.
This phenomenon appeared due to the boundary effect, and they named it boundary layer.
Since this wave's form is the stationary solution, other people also call it stationary solution.
From then on,
the investigation of the existence and stability of the boundary layer,
including the stability of its combinations with viscous hyperbolic waves has aroused many researchers' interests.
Later, Matsumura in \cite{matsumura.maa2001} gave the complete classification of the large-time behavior of the solutions for the compressible isentropic Navier-Stokes equations in terms of the far field states and the boundary data.
According to the sign of $u_-$, i.e. the value of fluid velocity at the boundary $ x=0 $,
the IBVP of the Navier-Stokes equations can be divided into three cases:
the outflow problem $(u_-<0)$, the inflow problem $(u_->0)$ and the impermeable wall problem $(u_-=0)$.
Since then, some conjectures in \cite{matsumura.maa2001} have been extensively investigated and verified for the isentropic and non-isentropic Navier-Stokes equations by many authors.
Here, we mention several works on the asymptotic stability analysis of wave patterns to the IBVP:
\cite{zhupc-cmp-2003, kawashima-zhu-jde-2008, huangfm-qin-jde-2009, kawashima-zhu-arma-2009, zhupc-m3-2010, qinxiaohong-2011, Nakamura-Yuge-inflow2011} for the outflow problem,
\cite{matsumura-inflow2001, huang-matsumura-shi-cmp2003, qin-wang-siam-2009, qin-wang-siam-2011, Nakamura-inflow2011} for the inflow problem
and \cite{matsumura-mei-arma-1999, matsumura-nishihara-qam-2000, huang-li-shi-cms-2010} for the impermeable wall problem.

These three kinds of IBVP are still important topics in the theory of fluid dynamics and plasma physics.
So it is meaningful and interesting to study the corresponding problems for the Navier-Stokes-Maxwell equations.
In the present paper, we only discuss the outflow problem.
The outflow boundary value $ u_-<0 $ means that fluid blows out from the boundary $x = 0$ with the velocity $ u_- $.
Thus this problem is called the outflow problem (see \cite{matsumura.maa2001}).
The outflow boundary condition implies that the characteristic of the hyperbolic equation $ \eqref{yuanfangcheng}_1 $ for the density $ \rho $ is negative around the boundary so that boundary
conditions on $ u $ and $ \theta $ to parabolic equations $ \eqref{yuanfangcheng}_2 $ and $ \eqref{yuanfangcheng}_3 $ are necessary and sufficient for the wellposedness of the hydrodynamic parts.
Motivated by \cite{qinxiaohong-2011, huangfm-qin-jde-2009, zhupc-m3-2010},
we will consider asymptotic stability of solutions towards the superposition of the boundary layer (including the nondegenerate case) and the 3-rarefaction wave under some smallness conditions and with the composite boundary condition of the electromagnetic fields.
To our knowledge, this can be viewed as the first result for the Navier-Stokes-Maxwell equations in this direction.

Here, we briefly give some remarks on our problem and review some key analytical techniques.
Compared with the result of \cite{qinxiaohong-2011} and \cite{zhupc-m3-2010} for compressible Navier-Stokes equations, the outflow problem for compressible non-isentropic Navier-Stokes-Maxwell equations is more complicated.

Due to the strong interaction between the fluid motion and the electromagnetic fields, the main difficulties to prove the nonlinear stability of wave patterns lie in the additional terms produced by the electrodynamic effects.
The first bad term about the electric field $ E $ and the magnetic field $ b $ we suffered is $-\int_0^t\int_{\mathbb{R}_+} (E+\psi b+ \hat{u} b)\psi b \,{\rm{d}}x{\rm{d}}\tau$.
But the lack of damping decay mechanism of the magnetic field $ b $ hinders us from obtaining the time-space integrable good term $ \int_0^t\int_{\mathbb{R}_+} b^2 \,\mathrm{d}x \mathrm{d}\tau $,
which is disadvantageous to the derivation of the zero-order energy estimates.
To overcome this obstacle, we try to use the structure of the Maxwell equations and package extra terms together with the bad term $-\int_0^t\int_{\mathbb{R}_+} (E+\psi b+ \hat{u} b)\psi b \,{\rm{d}}x{\rm{d}}\tau$ to produce a compound time-space integrable good term $\int_0^t\int_{\mathbb{R}_+} (E+\psi b+ \hat{u} b)^2 \,\mathrm{d}x{\rm{d}}\tau$,
which is crucial to obtain the zero-order energy estimates and is essential to get high-order energy estimates.
One can see Lemma \ref{lemmadijieenergy} of the zero-order energy estimates for details.

Secondly, we would encounter some obstacles under the composite boundary condition of the electromagnetic fields: $ (\sqrt{\varepsilon} E - b)(0,t) = 0 $.
For example, once using the Poincar$\acute{\rm e}$ type inequality \eqref{budengshi-Nikkuni} to estimate $ \int_0^t  \int_{\mathbb{R_+}} \tilde{u}_x^2  b^2  \,\mathrm{d}x\mathrm{d}\tau $,
then the bad term $ \delta ^3 \int_0^t b^2(0,\tau) \,\mathrm{d}\tau $ would arise.
But the absence of good term $ \int_0^t\int_{\mathbb{R}_+} b^2 \,\mathrm{d}x \mathrm{d}\tau $ makes it invalid to apply $ L^\infty_x $ Sobolev inequality \eqref{sobolevlwuqiong} to estimate $ b^2(0,t) $.
This requires a good term produces from the boundary estimates so that we can absorb the corresponding bad term by choosing $ \delta  $ suitably small.
In addition, to treat the boundary term $\int_0^t\int_{\mathbb{R_+}} -   \left(  \frac{1}{2} \hat{u}\varepsilon E^2 +  \frac{1}{2} \hat{u} b^2 + Eb   \right)_x  \,\mathrm{d}x \mathrm{d}\tau $ in \eqref{guji-boundary-eb1},
the sign of $ u_- $ is bad for this term. This fact also urges us to use the boundary condition $ (\sqrt{\varepsilon} E - b)(0,t) = 0 $ and the additional technical condition \eqref{jiedianxishu-bdyguji-1} to produce a boundary good term $ \frac{3}{4} \int_0^t  \sqrt{\varepsilon}E^2(0,\tau)  \,\mathrm{d}\tau $ in \eqref{guji-boundary-eb1}
such that we can employ it to absorb the former bad term $ \delta ^3 \int_0^t b^2(0,\tau) \,\mathrm{d}\tau $ indeed.
On the other hand, for boundary terms $ \int_0^t\int_{\mathbb{R_+}} - \left( E_x b_x  \right)_x  \,\mathrm{d}x \mathrm{d}\tau $ and $ \int_0^t\int_{\mathbb{R_+}}  2 \varepsilon \left( E E_x  \right)_x  \,\mathrm{d}x \mathrm{d}\tau  $,
we try to use the specific structure of the Maxwell equations and the composite boundary condition $ (\sqrt{\varepsilon} E - b)(0,t) = 0 $ to transform $ E_x(0,t) $ and $ b_x(0,t) $ into some suitable forms and get desired estimates eventually;
see \eqref{bdycons-Ebguanxi-2}-\eqref{guji-boundary-eb2} and \eqref{guji-boundary-eb3} for details.

Thirdly, for the boundary layer $\tilde{u}$,{}
we have to estimate the terms including the weight $ \tilde{u}_x $ such as $ - \int_0^t\int_{\mathbb{R}_+} \frac{1}{2} \tilde{u}_x \left( \varepsilon E^2 + b^2 \right) \,\mathrm{d}x\mathrm{d}\tau $ on the right-hand side of the inequality (see in \eqref{guji-jibennl-1}).
It is not workable to apply usual method to estimate this term directly.
Motivated by the idea of \cite{qinxiaohong-2011}: for each $ (\tilde{u}, \tilde{\theta})(x) $ there exists a constant $ M_0 \geq 1 $ just depending on $ u_-, \theta_-, \rho_*, u_*, \theta_* $ such that $ \tilde{u}_x \geq 0 $ and $ \tilde{\theta}_x \geq 0 $ on $ [M_0,+\infty) $,
we divide the integral into two parts: $  -  \left\{ \int_0^t \int_0^{M_0}     + \int_0^t\int_{M_0}^{+\infty}  \right\} \frac{1}{2} \tilde{u}_x \left(\varepsilon E^2 + b^2 \right) \,\mathrm{d}x\mathrm{d}\tau $.
This treatment avoids us to estimate the bad term $  \int_0^t\int_{M_0}^{+\infty} \frac{1}{2} \tilde{u}_x \left( \varepsilon E^2 + b^2 \right) \,\mathrm{d}x\mathrm{d}\tau $.
Then together with the boundary good term $ \frac{3}{4} \int_0^t  \sqrt{\varepsilon}E^2(0,\tau)  \,\mathrm{d}\tau $ in \eqref{guji-boundary-eb1},
we can obtain the desired estimates; see the proof of \eqref{fenqujianbanfa-1}-\eqref{yuxiang-diancichang-1} for details.

Fourthly, in order to absorb some nonlinear bad terms by some good terms concerning the electric field $ E $ or the magnetic field $ b $,
we require a technical condition \eqref{jiedianchangshuxiao} that $ \varepsilon $ is bounded for some specific positive constants $\bar C$.
Through some elaborate analysis, we can finally determine the value of the constant $\bar{C}$.
One can see the discussion about $ \varepsilon $ in \eqref{jiedianxishu-bdyguji-1}, \eqref{jiedianxishu-no.1}, \eqref{jiedianxishu-no.2}, \eqref{jiedianxishu-bdyguji-2}, \eqref{jiedianxishu-no.3} and \eqref{jiedianxishu-no.4} for details.
So far it is unclear how to remove such restriction for the nonlinear stability of wave patterns on the Navier-Stokes-Maxwell equations.


In the appendix, we provide a mathematical derivation for the one-dimensional model \eqref{yuanfangcheng}.
We also obtain additional four 1-D models, which remains to be studied in mathematics and physics in future.


\vspace{2mm}

\textbf{Notations:}
Throughout this paper, we denote positive constants generally large (respectively, generally small) independent of $ x $ and $ t $ by $ C $ (respectively, by $ c $).
And the character `C' and `c' may vary from line to line.
$\|\cdot\|_{L^p}$ stands the $L^p$-norm on the Lebesgue space ${L^p}(\mathbb{R_+})\;(1\leq p\leq \infty)$.
For the sake of convenience, we always denote $\|\cdot\|=\|\cdot\|_{L^2}$.
What's more, $H^k$ will be used to denote the usual Sobolev space $W^{k,2}(\mathbb{R_+})\;(k\in \mathbb{Z}_+)$ with respect to variable $x$.

\section{Preliminaries and main results}

\noindent Let
\begin{equation*}
  c(\rho,s) := \sqrt{p_\rho(\rho,s)} = \sqrt{R \gamma \theta} =: c(\rho,\theta),  \qquad   M(\rho,u,\theta) := \frac{\left| u \right| }{c},
\end{equation*}
which are called the local sound speed and the local Mach number. Let
\begin{equation*}
  c_+ :=  c(\rho_+, \theta_+) = \sqrt{R \gamma \theta_+},  \qquad   M_+ := M(\rho_+, u_+, \theta_+) = \frac{\left| u_+ \right| }{c_+},
\end{equation*}
which are called the sound speed and the Mach number at the far field $ x = +\infty $, respectively.
We divide the state space: the quarter 3D space $ \{(\rho, u, \theta) \;|\; \rho >0,\; \theta >0 \} $ into three parts:
\begin{equation*}
\left\{\begin{aligned}
& \Omega_{\mathrm{sub}} := \left\{ (\rho, u, \theta)  \left|\; \left| u \right|  < \sqrt{R \gamma \theta} \right.\right\} ,     \\[1mm]
& \Gamma_{\mathrm{tran}} := \left\{ (\rho, u, \theta) \left|\; \left| u \right|  = \sqrt{R \gamma \theta} \right.\right\} ,     \\[1mm]
& \Omega_{\mathrm{super}} := \left\{ (\rho, u, \theta) \left|\; \left| u \right|  > \sqrt{R \gamma \theta} \right.\right\} ,
\end{aligned}\right.
\end{equation*}
which are called subsonic, transonic and supersonic regions, respectively.
If we add the alternative condition $ u < 0 $ or $ u \geq 0 $ into these regions,
then we have six connected subsets $ \Omega_{\mathrm{sub}}^\pm $, $ \Gamma_{\mathrm{tran}}^\pm $ and $ \Omega_{\mathrm{super}}^\pm $.

\subsection{Boundary layer}\label{subsection2.1}

\noindent It is known that the corresponding hyperbolic system of (1.1) with (1.2) has three characteristic values
\begin{equation*}
  \lambda_1(\rho,u,\theta) = u - \sqrt{R \gamma \theta}, \qquad \lambda_2(\rho,u,\theta) = u, \qquad \lambda_3(\rho, u, \theta) = u + \sqrt{R \gamma \theta}.
\end{equation*}
For the outflow problem $u_- < 0$, one easily knows $\lambda_1 < \lambda_2 < 0$ at the boundary $x=0$.
So if $u_+ < 0$ and $u_-$ is sufficiently close to $u_+$ such that $u_- < 0$ also holds, then a stationary solution $ (\tilde{\rho}, \tilde{u}, \tilde{\theta}, \tilde{E}, \tilde{b})(x) $ to the outflow problem \eqref{yuanfangcheng}-\eqref{yuanfangcheng-chuzhi} and \eqref{yuanfangcheng-chuzhi-shuju}-\eqref{yuanfangcheng-bianzhi} is expected:
\begin{equation}\label{wentaijie-fangcheng-1}
\left\{ \begin{aligned}
& (\tilde{\rho} \tilde{u})_{x} =0,  \qquad x \in \mathbb{R}_{+},    \\[1mm]
& \tilde{\rho} \tilde{u} \tilde{u}_{x} +  \tilde{p}_{x} = \mu  \tilde{u}_{xx},   \\[1mm]
& \frac{R}{\gamma-1} \tilde{\rho} \tilde{u}  \tilde{\theta}_{x} + \tilde{p}  \tilde{u}_{x} = \mu \tilde{u}_{x}^{2}+\kappa  \tilde{\theta}_{xx},        \\[1mm]   
& \tilde{u}(0) = u_{-}, \quad \tilde{\theta}(0)=\theta_{-}, \quad(\tilde{\rho}, \tilde{u}, \tilde{\theta})(+\infty)=\left(\rho_{+}, u_{+}, \theta_{+}\right),     \\[1mm] 
& \inf _{x \in \mathbb{R}_{+}} \tilde{\rho}(x)>0, \quad \inf _{x \in \mathbb{R}_{+}} \tilde{\theta}(x)>0 ,
\end{aligned}\right.
\end{equation}    
with $\tilde{E} = \tilde{b} = 0 $, where $ \tilde{p} := p(\tilde{\rho},\tilde{\theta}) = R \tilde{\rho} \tilde{\theta} $.     
The stationary solution is usually called the boundary layer, see \cite{matsumura.maa2001}, \cite{qinxiaohong-2011} for example.
From the fact that $ \tilde{\rho}(x) >0 $ and $ u_-<0 $, we have
\begin{equation}\label{wentaijie-bianzhi}
  \rho_-: = \tilde{\rho}(0) = \frac{\rho_+u_+}{u_-}, \qquad \tilde{\rho}(x) = \frac{\rho_+ u_+}{\tilde{u}(x)}, \qquad \tilde{u}(x) < 0.
\end{equation}
Thus, \eqref{wentaijie-fangcheng-1} is equivalent to the coupling of \eqref{wentaijie-bianzhi} and the following ordinary differential equations:
\begin{equation}\label{wentaijie-fangcheng-2}
 \left\{\begin{aligned}
&  \tilde{u}_{x} =\frac{\rho_{+} u_{+}}{\mu}\left[\left(\tilde{u}-u_{+}\right)+R\left(\frac{\tilde{\theta}}{\tilde{u}}-\frac{\theta_{+}}{u_{+}}\right)\right], \qquad x \in \mathbb{R}_{+} ,      \\[2mm]
&  \tilde{\theta}_{x}  =  \frac{\rho_{+} u_{+}}{\kappa}   \left[  \frac{R \theta_{+}}{u_{+}}\left(\tilde{u}-u_{+}\right)       + \frac{R}{\gamma-1} \left(\tilde{\theta}-\theta_{+}\right)         - \frac{1}{2}\left(\tilde{u}-u_{+}\right)^{2} \right] ,       \\[2mm]
&  (\tilde{u}, \tilde{\theta})(0) = (u_{-},\theta_{-}),  \qquad(\tilde{u}, \tilde{\theta})(+\infty)=\left(u_{+}, \theta_{+}\right) ,
\end{aligned}\right.
\end{equation}
where $ p_+ := p(\rho_+,\theta_+) = R \rho_+ \theta_+ $.
The strength of the boundary layer is measured by
\begin{equation}
\delta  = \left| u_+ - u_- \right| + \left| \theta_+ - \theta_- \right| .
\end{equation}
Then we have the following lemma.


\begin{lemma}\label{wentaijie-lemma-1}$($Existence of the boundary layer$)$
Suppose that the boundary data $ (u_-, \theta_-) $ satisfy
\begin{equation}\label{tuxing-m+}
  \left(u_{-}, \theta_{-}\right) \in \mathcal{M}^{+}:=\left\{(u, \theta) \in \mathbb{R}^{2} ; \;  \left|\left(u-u_{+}, \theta-\theta_{+}\right)\right|<\delta_{0}\right\}
\end{equation}
for a certain positive constant $ \delta_0 $.
Notice that \eqref{tuxing-m+} is equivalent to the inequality $ \delta  < \delta_0  $.
\begin{itemize}
\item[$\mathrm{(i)}$]
For the supersonic case $M_+ > 1$, there exists a unique smooth solution $ (\tilde{u}, \tilde{\theta})(x) $ to the problem \eqref{wentaijie-fangcheng-2} satisfying
\begin{equation}\label{shuaijian-feituihua}
  \left| \left(\tilde{u}(x)-u_{+}, \tilde{\theta}(x)-\theta_{+}\right)^{(k)} \right|
  \leq C \delta  \mathrm{e}^{- c x},  \quad  ^{(k)}:= \frac{\mathrm{d}^{k}}{\mathrm{d}x^{k}},  \quad k=0,1,2, \cdots,
\end{equation}
where $c$ and $C$ are positive constants.


\item[$\mathrm{(ii)}$]
For the transonic case $M_+ = 1$,
there exists a centre-stable manifold $ \mathcal{M} \subset \mathcal{M}^+ $ consisting of two trajectories $\Gamma_{i} :=  \left(M_{i 1}, M_{i 2}\right)(\xi),\; i=1,2,\; \xi \in \mathbb{R}_{+}$, tangent to the line $ \mu u_+ (u-u_+) - (\gamma-1)\kappa (\theta - \theta_+) = 0 $ on the opposite directions at $ (u_+,\theta_+) $.
Depending on the location of $ (u_-,\theta_-) $, this case is divided into three subcases:

Subcase 1. For each $ (u_-,\theta_-)\in \mathcal{M}^+ $, $ (\tilde{u},\tilde{\theta}) \subset \mathcal{M} $, it holds that
\begin{equation}
  \left| \left(\tilde{u}(x)-u_{+}, \tilde{\theta}(x)-\theta_{+}\right)^{(k)} \right|
  \leq C \delta  \mathrm{e}^{- c x},    \quad k=0,1,2, \cdots.
\end{equation}

Subcase 2. For each $ (u_-,\theta_-) \in \mathcal{M}^+ $ satisfying $ \frac{\mu u_+ (u_+ - u_-)}{(\gamma-1) \kappa}  -  (\theta_+ - \theta_-)   < M_{i2}(\xi) $, where $ \xi $ is determined uniquely by $ M_{i 1}(\xi)=\frac{(\gamma-1)^{2} \kappa\left(u_{-}-u_{+}\right)}{R \mu \gamma+(\gamma-1)^{2} \kappa}+\frac{(\gamma-1) R \gamma \kappa\left(\theta_{-}-\theta_{+}\right)}{\left[R \mu \gamma+(\gamma-1)^{2} \kappa\right] u_{+}} $, $ i=1 $ or $ 2 $,
there exists a unique solution $ (\tilde{u},\tilde{\theta}) \subset \mathcal{M}^+ $ satisfying
\begin{equation}\label{shuaijian-tuihua}
  \left| \left(\tilde{u}(x)-u_{+}, \tilde{\theta}(x)-\theta_{+}\right)^{(k)} \right| \leq C \frac{\delta ^{k+1}}{(1+\delta  x)^{k+1}}, \quad k=0,1,2, \cdots,
\end{equation}
and
\begin{equation}
  \tilde{u}_x > 0, \quad \tilde{\theta}_x > 0  \quad \;\;\;  \mathrm{for}  \;\;  x \gg 1.
\end{equation}

Subcase 3. For each $ (u_-,\theta_-) \in \mathcal{M}^+ $, if it does not belong to Subcases 1 or 2,
then there exists no solution.

\item[$\mathrm{(iii)}$]
For the subsonic case $M_+ < 1$,
there exists a curve such that the unique smooth solution $ (\tilde{u}, \tilde{\theta})(x) $ to the problem \eqref{wentaijie-fangcheng-2} satisfying \eqref{shuaijian-feituihua}.

\end{itemize}

\end{lemma}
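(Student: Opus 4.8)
The plan is to read \eqref{wentaijie-fangcheng-2} as a planar autonomous system in $(\tilde u,\tilde\theta)$ and to construct the boundary layer as a trajectory issuing from $(u_-,\theta_-)$ at $x=0$ and tending to the equilibrium $(u_+,\theta_+)$ as $x\to+\infty$. Because $\tilde E=\tilde b=0$ decouples the Maxwell fields, this is precisely the stationary problem for the non-isentropic Navier--Stokes system, so the argument follows the phase-plane/invariant-manifold strategy of \cite{qinxiaohong-2011}. First I would observe that under $\delta<\delta_0$ the only relevant critical point in $\mathcal M^+$ is $(u_+,\theta_+)$, which reduces everything to a local analysis of the flow near this equilibrium.

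Next I would linearize the right-hand side of \eqref{wentaijie-fangcheng-2} at $(u_+,\theta_+)$; the Jacobian is
\begin{equation*}
J=\begin{pmatrix} \dfrac{\rho_+u_+}{\mu}\Big(1-\dfrac{R\theta_+}{u_+^2}\Big) & \dfrac{\rho_+R}{\mu}\\[3mm] \dfrac{\rho_+R\theta_+}{\kappa} & \dfrac{\rho_+u_+R}{\kappa(\gamma-1)}\end{pmatrix},
\end{equation*}
and a direct computation yields
\begin{equation*}
\det J=\frac{\rho_+^2R}{\mu\kappa(\gamma-1)}\big(u_+^2-R\gamma\theta_+\big)=\frac{\rho_+^2R}{\mu\kappa(\gamma-1)}\,c_+^2\big(M_+^2-1\big).
\end{equation*}
Thus the eigenvalue configuration is governed entirely by the sign of $M_+^2-1$: for $M_+>1$ one has $\det J>0$ together with $\operatorname{tr}J<0$ (using $u_+<0$), a stable node; for $M_+<1$ one has $\det J<0$, a saddle; and for $M_+=1$ one eigenvalue is $0$ while the other equals $\operatorname{tr}J<0$, a degenerate center--stable equilibrium. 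I would also check that in the transonic case the stable eigenvector is proportional to $(\kappa(\gamma-1),\,\mu u_+)$, i.e.\ it lies along the line $\mu u_+(u-u_+)-(\gamma-1)\kappa(\theta-\theta_+)=0$ singled out in part (ii).

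For parts (i) and (iii) I would then invoke the stable-manifold theorem. When $M_+>1$ both eigenvalues have negative real part, so $(u_+,\theta_+)$ is locally asymptotically stable and a full neighborhood is attracted to it; hence every $(u_-,\theta_-)\in\mathcal M^+$ is joined to $(u_+,\theta_+)$ by a unique orbit, and the linearized exponential decay, bootstrapped through the differentiated equations \eqref{wentaijie-fangcheng-2}, gives \eqref{shuaijian-feituihua} for all derivatives. When $M_+<1$ the equilibrium is a saddle whose one-dimensional stable manifold is exactly the advertised curve; only boundary data on this curve produce a (unique) connecting orbit, again with exponential decay.

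The transonic case (part (ii)) is where I expect the real difficulty, and I would handle it by a center-manifold reduction. Projecting \eqref{wentaijie-fangcheng-2} onto the one-dimensional center subspace produces a scalar equation $\eta_x=a\,\eta^2+O(\eta^3)$, whose leading coefficient $a$ must be extracted from the quadratic part of the vector field; its sign divides the center manifold into a convergent half and a divergent half. Superimposing the exponentially attracting stable direction then yields the trichotomy: data lying on the stable manifold $\mathcal M$ decay exponentially (Subcase~1); data on the convergent side---encoded by the inequality on $M_{i2}(\xi)$---are pulled onto the center manifold and inherit the algebraic rate $\delta^{k+1}(1+\delta x)^{-(k+1)}$ of \eqref{shuaijian-tuihua}; and data on the other side admit no connecting orbit (Subcase~3). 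The main obstacle is twofold: verifying that the reduced dynamics is genuinely quadratic with the sign that makes exactly one side converge, and then integrating $\eta_x=a\eta^2+\cdots$ while tracking the dependence on the layer strength $\delta$, so as to obtain the sharp, $\delta$-uniform algebraic decay together with the monotonicity $\tilde u_x>0,\ \tilde\theta_x>0$ for $x\gg1$, which has to be read off from the sign of the velocity field along the center manifold.
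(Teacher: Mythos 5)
The paper does not actually prove this lemma: the remark immediately following it states that the result is borrowed from \cite{zhupc-m3-2010} (proved there via the center manifold theorem) and \cite{qinxiaohong-2011}, and the proof is skipped for brevity. Your phase-plane/center-manifold plan is exactly the strategy of \cite{zhupc-m3-2010}, and the computations you do carry out check out against \eqref{wentaijie-fangcheng-2}: the Jacobian, $\det J=\frac{\rho_+^2R}{\mu\kappa(\gamma-1)}\bigl(u_+^2-R\gamma\theta_+\bigr)$, the sign of $\operatorname{tr}J$ for $u_+<0$ (so that $M_+>1$ gives an attracting equilibrium, $M_+<1$ a saddle, and $M_+=1$ a zero eigenvalue paired with a negative one), and the stable eigendirection $\bigl((\gamma-1)\kappa,\,\mu u_+\bigr)$ matching the tangent line in part (ii). The only caveat is that the genuinely delicate portion — extracting the quadratic coefficient of the reduced dynamics on the center manifold and deriving the $\delta$-uniform algebraic rates and monotonicity of Subcase 2 — is left as an outline rather than executed, but that is precisely the portion the paper itself declines to reproduce and defers to the cited references.
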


\begin{remark} For the transonic case $ M_+ = 1 $, $ \tilde{u}_x > 0 $ and $ \tilde{\theta}_x > 0 $ when $ x \gg 1 $ will be fundamental to obtain some energy estimates in the proof of Theorem \ref{wendingxing-bianjieceng} and \ref{wendingxing-fuhebo}. See Section \ref{proof-theorem2.3} for details.
The result of Case $\mathrm{(ii)}$ is borrowed from \cite{qinxiaohong-2011} and we skip the proof for brevity.
We should mention that Lemma \ref{wentaijie-lemma-1} was first obtained by Zhu et al. in \cite{zhupc-m3-2010} by using the central manifold theorem and Qin in \cite{qinxiaohong-2011} gave another proof of Case $\mathrm{(ii)}$ by employing the qualitative theory of ordinary differential equations.

\end{remark}


The asymptotic stability of the boundary layer $ (\tilde{\rho}, \tilde{u}, \tilde{\theta}, 0, 0) $ is stated in the following theorem.

\begin{theorem}\label{wendingxing-bianjieceng}
Assume that the boundary layer $ (\tilde{\rho}, \tilde{u}, \tilde{\theta}, 0, 0) $ exists under one of the following three conditions: $\mathrm{(i)}$ $ M_+ > 1 $; $\mathrm{(ii)}$ $ M_+ =1 $; $\mathrm{(iii)}$ $ M_+ < 1 $.
In addition, the dielectric constant $\varepsilon$ satisfies
\begin{equation}
0<\varepsilon<\bar{C}
\end{equation}
for some positive constant $\bar{C}$ $($depending only on $|u_-|$ and $|u_+|$$)$.
Then there exist two small positive constants $ \delta_1 $ and $ \varepsilon_1 $ which are independent of $T$, such that if $ 0 < \delta  < \min \{\delta_0, \delta_1 \}$ and the initial data satisfies
\begin{equation}
  \| (\rho_0, u_0, \theta_0, E_0, b_0) - (\tilde{\rho}, \tilde{u}, \tilde{\theta}, 0, 0) \|^2_{H^1}     \leq \varepsilon_1 ,
\end{equation}
then the initial-boundary value problem \eqref{yuanfangcheng}-\eqref{yuanfangcheng-chuzhi} and \eqref{yuanfangcheng-chuzhi-shuju}-\eqref{yuanfangcheng-bianzhi} has a unique global solution $ (\rho, u, \theta, E, b) $. 
Moreover, the solution $ (\rho, u, \theta, E, b) $ converges to the boundary layer $ (\tilde{\rho}, \tilde{u}, \tilde{\theta}, 0, 0) $ uniformly as time tends to infinity in the sense that:
\begin{equation}
\lim_{t \rightarrow +\infty}  \sup_{x\in \mathbb{R}_+} \left| (\rho, u, \theta, E, b)(x,t)  -  (\tilde{\rho}, \tilde{u}, \tilde{\theta}, 0, 0)(x) \right| = 0.
\end{equation}

\end{theorem}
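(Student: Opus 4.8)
The plan is to combine the elementary energy method with a standard continuation argument. First I would recast the problem in terms of the perturbation from the boundary layer. Since $\tilde E = \tilde b = 0$, set $\phi = \rho - \tilde\rho$, $\psi = u - \tilde u$, $\zeta = \theta - \tilde\theta$ and retain $(E,b)$ as the electromagnetic perturbation. Subtracting the stationary system \eqref{wentaijie-fangcheng-1} from \eqref{yuanfangcheng} produces an evolution system for $(\phi,\psi,\zeta,E,b)$ whose inhomogeneous terms are weighted by the boundary-layer derivatives $\tilde\rho_x,\tilde u_x,\tilde\theta_x$, which are $O(\delta)$ and decay in $x$ (exponentially in the supersonic and subsonic cases and in transonic Subcase~1, algebraically in transonic Subcase~2) by Lemma~\ref{wentaijie-lemma-1}. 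The boundary conditions reduce to $\psi(0,t)=\zeta(0,t)=0$ and $(\sqrt\varepsilon E-b)(0,t)=0$, with all perturbations vanishing at $x=+\infty$. Local existence in $C([0,T];H^1)$ follows from a routine linearization-and-iteration scheme for this hyperbolic--parabolic--hyperbolic system, so the entire difficulty is concentrated in closing a uniform a priori estimate.

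For the a priori estimate I set
\[
N(T)^2 := \sup_{0\le t\le T}\,\|(\phi,\psi,\zeta,E,b)(t)\|_{H^1}^2 ,
\]
and, under the a priori assumption that $N(T)$ is small, I would establish the closed inequality
\[
\|(\phi,\psi,\zeta,E,b)(t)\|_{H^1}^2 + \int_0^t \mathcal{D}(\tau)\,\mathrm{d}\tau
\;\le\; C\,\|(\phi,\psi,\zeta,E,b)(0)\|_{H^1}^2 ,
\]
where $\mathcal{D}$ collects the parabolic dissipation $\|(\psi_x,\zeta_x)\|_{H^1}^2$, the compound electromagnetic good term $\|E+\psi b+\tilde u b\|_{H^1}^2$, and the boundary good terms. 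The zero-order estimate comes from the standard Kawashima--Matsumura--Nishihara entropy-type functional for the hydrodynamic variables, augmented by the electromagnetic energy $\tfrac12\int_{\mathbb{R}_+}(\varepsilon E^2+b^2)\,\mathrm{d}x$: multiplying $\eqref{yuanfangcheng}_2$ by $\psi$, $\eqref{yuanfangcheng}_3$ by a temperature weight, and combining with the Maxwell block. The only dissipation the Maxwell equations provide is $\int\!\!\int(E+\psi b+\tilde u b)^2$, so following the strategy outlined in the introduction I would repackage the bad coupling term $-\int_0^t\!\!\int_{\mathbb{R}_+}(E+\psi b+\tilde u b)\psi b$ with the remaining Maxwell contributions to recover precisely this good term. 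The cubic and boundary-layer-weighted remainders are then absorbed by the smallness of $\delta$, $N(T)$ and $\varepsilon<\bar C$; in particular the weighted term $-\int_0^t\!\!\int_{\mathbb{R}_+}\tfrac12\tilde u_x(\varepsilon E^2+b^2)$ is split at $x=M_0$, where Lemma~\ref{wentaijie-lemma-1} gives $\tilde u_x\ge 0$ on $[M_0,+\infty)$, discarding the favorable far part and dominating the remaining near part by the boundary good term $\tfrac34\int_0^t\sqrt\varepsilon E^2(0,\tau)\,\mathrm{d}\tau$.

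The first-order estimate is obtained by differentiating the system once in $x$, pairing with $(\phi_x,\psi_x,\zeta_x,E_x,b_x)$, and using the parabolic equations to trade second spatial derivatives for time derivatives where convenient. The delicate new contributions are the boundary terms $\int_0^t(E_xb_x)(0,\tau)\,\mathrm{d}\tau$ and $\int_0^t\varepsilon(EE_x)(0,\tau)\,\mathrm{d}\tau$: differentiating the boundary relation $(\sqrt\varepsilon E-b)(0,t)=0$ in $t$ and substituting the Maxwell equations $\eqref{yuanfangcheng}_4$ and $\eqref{yuanfangcheng}_5$ expresses $E_x(0,t)$ and $b_x(0,t)$ through the boundary values $E(0,t),b(0,t)$, so these contributions fold into the boundary good terms already produced at zero order. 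Adding the two orders and invoking the continuation argument (local existence together with the closed estimate, with $\delta_1,\varepsilon_1$ chosen so small that $N(T)$ can never reach the a priori threshold) extends the solution to $[0,+\infty)$ with a uniform $H^1$ bound and $\int_0^\infty\mathcal{D}\,\mathrm{d}\tau<\infty$.

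Finally, the uniform bound plus $\int_0^\infty\mathcal{D}\,\mathrm{d}\tau<\infty$, together with a uniform-in-time bound on $\tfrac{\mathrm{d}}{\mathrm{d}t}\|(\phi_x,\psi_x,\zeta_x,E,b)\|^2$ read off from the equations, implies through the standard lemma (an $L^1(0,\infty)$ function with bounded derivative tends to zero) that $\|(\phi_x,\psi_x,\zeta_x,E,b)(t)\|\to 0$ as $t\to+\infty$; the Sobolev inequality $\|f\|_{L^\infty}^2\le C\|f\|\,\|f_x\|$ then yields the claimed uniform convergence. I expect the main obstacle to be the absence of any direct damping for the magnetic field $b$: since one never controls $\int\!\!\int b^2$, every step must be arranged around the compound good term $\int\!\!\int(E+\psi b+\tilde u b)^2$ and the boundary good terms, and it is exactly the interplay of this repackaging with the composite boundary condition $(\sqrt\varepsilon E-b)(0,t)=0$ and the bound $\varepsilon<\bar C$ that closes the argument.
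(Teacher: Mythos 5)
Your proposal follows essentially the same route as the paper, which proves Theorem \ref{wendingxing-bianjieceng} by specializing the argument given in detail for Theorem \ref{wendingxing-fuhebo}: the same perturbation setup, the Kawashima--Matsumura--Nishihara entropy functional augmented by the electromagnetic energy, the repackaging into the compound good term $\int_0^t\!\int_{\mathbb{R}_+}(E+\psi b+\tilde u b)^2$, the splitting of the $\tilde u_x$-weighted term at $M_0$, the boundary manipulations via $(\sqrt{\varepsilon}E-b)(0,t)=0$ producing the good term $\tfrac34\int_0^t\sqrt{\varepsilon}E^2(0,\tau)\,\mathrm{d}\tau$, the restriction $\varepsilon<\bar C$, and the final convergence via the Sobolev inequality. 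You have correctly identified all the key mechanisms; the only cosmetic difference is that the paper obtains the dissipation of $E_x,b_x$ and of the compound term as separate integrals rather than an $H^1$ norm of the compound quantity.
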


\begin{remark}
Motivated by \cite{zhupc-m3-2010} and \cite{yin2019}, we expect to study the convergence rate of the solutions towards the \emph{non-degenerate} boundary layer for the 1-D Navier-Stokes-Maxwell equations.  
The main difficulty in the analysis lies that we can't get the weighted time-space integrable good terms $ \int_0^t\int_{\mathbb{R_+}} (1+\tau)^\xi W_{\hat{\nu},\beta} E^2 \,\mathrm{d}x \mathrm{d}\tau  $ and $ \int_0^t\int_{\mathbb{R_+}} (1+\tau)^\xi W_{\hat{\nu},\beta} b^2 \,\mathrm{d}x \mathrm{d}\tau  $ simultaneously, 
and only can obtain the dissipation good term $ \int_0^t\int_{\mathbb{R_+}} (1+\tau)^\xi W_{\hat{\nu},\beta} (E + ub)^2 \,\mathrm{d}x \mathrm{d}\tau $ instead, 
which is a typical feature of the Navier-Stokes-Maxwell equations with regularity-loss property. 
Precisely, obstacle occurs in the zero-order weighted energy estimates on $E $ and $ b $. 
By employing some similar argument as that of Lemma \ref{lemmadijieenergy}, then choosing $ \beta $, $ \varepsilon $ and $ \delta $ suitably small, it holds that 
\begin{align}\label{shijianquan} 
&\quad (1+t)^\xi \int_{\mathbb{R_+}}  W_{\hat{\nu},\beta} \left( \rho \eta(x,t) + \frac{1}{2}  \varepsilon E^2 + \frac{1}{2} b^2  +  \varepsilon E \tilde{u} b \right) \,\mathrm{d}x      \nonumber \\[1mm] 
&\quad  + c \int_0^t (1+\tau)^\xi \phi^2(0,\tau) \,\mathrm{d}\tau      + \left( 1- \left| u_- \right|\sqrt{\varepsilon} \right) \int_0^t (1+\tau)^\xi \sqrt{\varepsilon} E^2(0,\tau) \,\mathrm{d}\tau         \nonumber \\[1mm]  
&\quad  + c \beta \int_0^t\int_{\mathbb{R_+}} (1+\tau)^\xi W_{\hat{\nu}-1,\beta}  \left| (\phi,\psi,\zeta) \right|^2  \,\mathrm{d}x \mathrm{d}\tau       \nonumber \\[1mm]  
&\quad  + c \int_0^t\int_{\mathbb{R_+}} (1+\tau)^\xi W_{\hat{\nu},\beta} \left| (\psi_x, \zeta_x, E + u b) \right|^2  \,\mathrm{d}x \mathrm{d}\tau       \nonumber \\[1mm]  
& \leq  C \int_{\mathbb{R_+}} W_{\hat{\nu},\beta} \left| (\phi_0,\psi_0,\zeta_0,E_0,b_0) \right|^2 \,\mathrm{d}x      + C \delta \int_0^t (1+\tau)^\xi \, \varepsilon E^2(0,\tau) \,\mathrm{d}\tau     \nonumber \\[1mm] 
& + C \delta \int_0^t (1+\tau)^\xi \| (\phi_x,\sqrt{\varepsilon}E_x,b_x) \|^2  \,\mathrm{d}\tau         + C \int_0^t\int_{\mathbb{R_+}} \xi(1+\tau)^{\xi-1} W_{\hat{\nu},\beta} \left| (\phi, \psi, \zeta) \right|^2 \,\mathrm{d}x \mathrm{d}\tau     \nonumber \\[1mm]  
& -\beta \hat{\nu} \int_0^t\int_{\mathbb{R_+}} (1+\tau)^\xi W_{\hat{\nu}-1,\beta} \left( \frac{1}{2} \tilde{u} \varepsilon E^2 + \frac{1}{2} \tilde{u} b^2 + E b  \right)  \,\mathrm{d}x \mathrm{d}\tau      \nonumber \\[1mm]  
& + \int_0^t\int_{\mathbb{R_+}} \xi (1+\tau)^{\xi-1} W_{\hat{\nu},\beta} \left( \frac{1}{2} \varepsilon E^2  +  \frac{1}{2} b^2  +  \varepsilon E \tilde{u} b \right) \,\mathrm{d}x \mathrm{d}\tau.  
\end{align} 
Here, $ W_{\hat{\nu}, \beta} := (1+ \beta x)^{\hat{\nu}} $ is an algebraic weight and $ \eta(x,t) $ is defined in Lemma \ref{lemmadijieenergy}. 
In order to not only obtain the weighted space integrable good term $ (1+t)^\xi \int_{\mathbb{R_+}} W_{\hat{\nu},\beta} \left( \varepsilon E^2 +  b^2 \right) \,\mathrm{d}x $ from the first term on the left-hand side of \eqref{shijianquan}, 
but also apply an induction to $ \xi $ of the last term on the right-hand side of \eqref{shijianquan}, 
we hope that 
there exist two positive constants $ m $ and $ M $ such that 
\begin{equation}\label{tiaojian-danjifen}
 \left( \frac{1}{2} \varepsilon - a_1 \right) E^2 + \left( \frac{1}{2} - a_2 \right) b^2  +  \varepsilon E \tilde{u} b  \geq  m \left( \varepsilon E^2 + b^2 \right)
\end{equation}
and
\begin{equation}\label{tiaojian-dituixiang}
\left( \frac{1}{2} \varepsilon + k_1 \right) E^2 + \left( \frac{1}{2} + k_2 \right) b^2  +  \varepsilon E \tilde{u} b  \leq  M \left( E + u b \right)^2  
\end{equation}
for some given constants $ 0 \leq a_1 < \frac{1}{2} \varepsilon$, $ 0 \leq  a_2  < \frac{1}{2} $ and $ k_1,\,k_2 \geq 0 $. 
However, by means of positive semidefinite quadratic form, 
it is not hard to deduce $ 1 > \varepsilon \tilde{u}^2 $ from \eqref{tiaojian-danjifen} and $ \varepsilon \tilde{u}^2 > 1 $ from \eqref{tiaojian-dituixiang}, respectively. 
This implies \eqref{tiaojian-danjifen} and \eqref{tiaojian-dituixiang} can not be established simultaneously. 
Thus it seems unable to apply an induction to $ \xi $ of \eqref{shijianquan} to get the desired convergence rate just depending only on the weighted time-space integrable compound good term $ \int_0^t\int_{\mathbb{R_+}} (1+\tau)^\xi W_{\hat{\nu},\beta} \left( E + u b \right) ^2  \,\mathrm{d}x \mathrm{d}\tau $. 

\end{remark}


\subsection{Rarefaction wave}\label{subsection2.2}

\noindent It is well known that the 3-rarefaction wave curve through the right-hand side state $ (\rho_+, u_+, \theta_+) $ is
\begin{equation}
  R_{3}\left(\rho_{+}, u_{+}, \theta_{+}\right) := \left\{ (\rho, u, \theta)\; \left| \begin{aligned}
&\; 0<\rho<\rho_{+},\;\;  \rho^{1-\gamma} \theta=\rho_{+}^{1-\gamma} \theta_{+},         \\[2mm]
&\; u = u_{+}    + \int_{\rho_{+}}^{\rho} \sqrt{R \gamma \rho_{+}^{1-\gamma} \theta_{+}} \; \xi^{\frac{\gamma-3}{2}} \,\mathrm{d} \xi
\end{aligned}\right.\right\} .
\end{equation}
So for each pair of data $ (u_-, \theta_-) $ with the restriction condition
\begin{equation}
  u_{-} = u_{+}    + \int_{\rho_{+}}^{\left(\theta_{-} / \theta_{+}\right)^{\frac{1}{\gamma-1}} \rho_{+}} \sqrt{R \gamma \rho_{+}^{1-\gamma} \theta_{+}} \; \xi^{\frac{\gamma-3}{2}} \,\mathrm{d} \xi  ,
\end{equation}
there exists a unique $ \rho_- $ such that $ (\rho_-, u_-, \theta_-) \in R_3 (\rho_+,u_+,\theta_+)  $. The 3-rarefaction wave $ (\rho^r, u^r, \theta^r)(x/t) $ connecting $ (\rho_-, u_-, \theta_-) $ and $ (\rho_+, u_+, \theta_+) $ is the unique weak solution globally in time to the following Riemann problem:
\begin{equation}
\left\{\begin{aligned}
& \rho_{t}+(\rho u)_{x}=0,     \qquad x \in \mathbb{R},\;\; t>0,     \\[2mm]
& (\rho u)_{t}+\left(\rho u^{2}+p\right)_{x}=0,        \\[2mm]
& \left[\rho\left(\frac{R}{\gamma-1} \theta+\frac{1}{2} u^{2}\right)\right]_{t}+\left[\rho u\left(\frac{R}{\gamma-1} \theta+\frac{1}{2} u^{2}\right)+p u\right]_{x}=0,      \\[2mm]
& (\rho, u, \theta)(x, 0) =
              \left\{\begin{array}{ll}
              \left(\rho_{-}, u_{-}, \theta_{-}\right), \quad  & x<0,     \\[2mm]
              \left(\rho_{+}, u_{+}, \theta_{+}\right), \quad  & x>0,
              \end{array}\right.
\end{aligned}\right.
\end{equation}
with the electric field rarefaction wave $ \tilde{E}=0 $ and the magnetic field rarefaction wave $ \tilde{b}=0 $.
Here $ \theta_- < \theta_+  $ (or equivalently, $ \rho_- < \rho_+ $, $ u_- < u_+ $).
In addition, if $ \lambda_3(\rho_-,u_-,\theta_-) \geq 0 $, then the rarefaction wave is constant on $ (x,t) \in \mathbb{R}_- \times [0, + \infty) $.
For the outflow problem when $ u_- \in \Omega_{\mathrm{sub}}^- \cup \Gamma_{\mathrm{tran}}^- $, then $ \lambda_3(\rho_-,u_-,\theta_-) \geq 0 $.
Thus in this situation, one can expect that the solutions to the outflow problem converge towards 3-rarefaction wave which is similar to the Cauchy problem of \eqref{yuanfangcheng}.
To give the details of the large-time behavior of the solutions to the outflow problem, it is necessary to construct a smooth approximation $ (\bar{\rho},\bar{u},\bar{\theta})(x,t) $ of $ (\rho^r, u^r, \theta^r)(x/t) $. To this end, we borrow the idea from \cite{huangfm-qin-jde-2009} and \cite{matsumura-nishihara-cmp-1992}.

Consider the solution to the following Cauchy problem:
\begin{equation}\label{burgersfangcheng}
\left\{\begin{aligned}
& w_{t}+w w_{x}=0,     \qquad   x \in \mathbb{R},\;\; t>0,     \\[2mm]
& w(x, 0) = \left\{\begin{array}{ll}
            \displaystyle  w_{-},    & \;  x < 0,      \\[2mm]
            \displaystyle  w_{-}+C_{q} \delta_{r} \int_{0}^{\alpha x} y^{q} \mathrm{e}^{-y} \mathrm{~d} y,    & \;  x \geq 0.
            \end{array}\right.
\end{aligned}\right.
\end{equation}
Here $ \delta_r := w_+ - w_- > 0 $, $ 0 < \alpha < 1 \leq q  $ are two constants to be determined later, and $ C_q $ is a constant such that $ C_{q} \int_{0}^{+\infty} y^{q} \mathrm{e}^{-y} \,\mathrm{d}y = 1 $.
Let $ w_\pm = \lambda_3(\rho_\pm, u_\pm, \theta_\pm) $ and $ (\bar{\rho},\bar{u},\bar{\theta}, \bar{E}, \bar{b})(x,t) $ be defined as
\begin{equation}\label{gouzaoxishubo}
\left\{\begin{aligned}
&\left( \bar{u}+\sqrt{R \gamma \bar{\theta}} \right)  (x, t) = w(x, 1+t),    \qquad x  \in \mathbb{R},\;\;  t>0,      \\[2mm]
&\left(\bar{\rho}^{1-\gamma} \bar{\theta}\right)(x, t) = \rho_{+}^{1-\gamma} \theta_{+},      \\[2mm]
&\bar{u}(x, t)=u_{+}+\int_{\rho_{+}}^{\bar{\rho}(x, t)} \sqrt{R \gamma \rho_{+}^{1-\gamma} \theta_{+}} \; \xi^{\frac{\gamma-3}{2}} \,\mathrm{d} \xi,       \\[2mm]
& \bar{E}(x,t) = \bar{b}(x,t) = 0.
\end{aligned}\right.
\end{equation}
Due to $ w_- \geq 0 $ and \eqref{burgersfangcheng}, one has $ w(x,t) \equiv w_- $ on $ \mathbb{R}_- \times [0,+\infty)$.
From \eqref{gouzaoxishubo}, one easily knows that $ (\bar{\rho},\bar{u},\bar{\theta})(x,t) $ is constant on $ \mathbb{R}_-\times [0,+\infty) $ too.
Here we restrict $ (\bar{\rho},\bar{u},\bar{\theta})(x,t) $ in the half space $ \{x\geq0\} $ and still use $ (\bar{\rho},\bar{u},\bar{\theta})(x,t) $ to represent $ (\bar{\rho},\bar{u},\bar{\theta})(x,t)|_{x\geq 0} $.
Then one easily has
\begin{equation}
\left\{\begin{aligned}
& \bar{\rho}_{t}+(\bar{\rho} \bar{u})_{x} = 0 ,     \qquad x \in \mathbb{R}_{+}, \;\; t>0,      \\[2mm]
& (\bar{\rho} \bar{u})_{t}+\left(\bar{\rho} \bar{u}^{2} + \bar{p}\right)_{x} = 0,     \\[2mm]
& \left[\bar{\rho}\left(\frac{R}{\gamma-1} \bar{\theta}+\frac{1}{2} \bar{u}^{2}\right)\right]_{t}+\left[\bar{\rho} \bar{u}\left(\frac{R}{\gamma-1} \bar{\theta}+\frac{1}{2} \bar{u}^{2}\right)+\bar{p} \bar{u}\right]_{x} = 0,       \\[2mm]
& (\bar{\rho}, \bar{u}, \bar{\theta})(0, t) = \left(\rho_{-}, u_{-}, \theta_{-}\right), \quad(\bar{\rho}, \bar{u}, \bar{\theta})(x, 0) \rightarrow  \left\{\begin{array}{ll}
                          \left(\rho_{-}, u_{-}, \theta_{-}\right), &\;  x \rightarrow 0^{+},       \\[2mm]
                          \left(\rho_{+}, u_{+}, \theta_{+}\right), &\;  x \rightarrow +\infty,
                          \end{array}\right.
\end{aligned}\right.
\end{equation}
where $ \bar{p} := p(\bar{\rho},\bar{\theta}) = R \bar{\rho} \bar{\theta} $. Then the following lemma holds.

\begin{lemma}$($see \cite{huangfm-qin-jde-2009}\,$)$\label{xishubo-shuaijianlemma}
$ (\bar{\rho},\bar{u},\bar{\theta})(x,t) $ satisfies
\begin{itemize}
\item[$\mathrm{(i)}$]
$ 0 \leq \bar{\rho}_x,\, \bar{\theta}_x \leq C \bar{u}_x $,\;\; $ \left| (\bar{\rho}_{xx}, \bar{\theta}_{xx}) \right| \leq C (\left| \bar{u}_{xx}  \right| + \bar{u}_x^2  )  $, \; $ \forall \; (x,t)\in \mathbb{R}_+ \times \mathbb{R}_+ $;

\item[$\mathrm{(ii)}$]
For any $ p \; (1 \leq p \leq +\infty) $,
there exists a constant $ C_{pq} $ such that
\begin{align}
& \| (\bar{\rho}_x, \bar{u}_x, \bar{\theta}_x)(t) \|_{L^p}    \leq C_{pq} \min \{\delta_r \alpha^{1-\frac{1}{p}},  \delta_r^{\frac{1}{p}}(1+t)^{-1 + \frac{1}{p}} \},     \label{shuaijian-rare-yijiedao}   \\[2mm]
& \| (\bar{\rho}_{xx}, \bar{u}_{xx}, \bar{\theta}_{xx})(t) \|_{L^p}    \leq C_{pq} \min \{(\delta_r + \delta_r^2) \alpha^{2-\frac{1}{p}}, (\delta_r^{\frac{1}{p}} + \delta_r^{\frac{1}{q}} )(1+t)^{-1 + \frac{1}{q}} \};     \label{shuaijian-rare-erjiedao}
\end{align}

\item[$\mathrm{(iii)}$]
If $ x \leq (u_- + \sqrt{R \gamma \theta_-})(1+t)$, then $ (\bar{\rho},\bar{u},\bar{\theta})(x,t) - (\rho_-,u_-,\theta_-) \equiv 0 $;

\item[$\mathrm{(iv)}$]
$ \lim_{t\rightarrow +\infty} \sup_{x\in \mathbb{R}_+} \left| (\bar{\rho},\bar{u},\bar{\theta})(x,t) - (\rho^r,u^r,\theta^r)(\frac{x}{1+t}) \right| =0  $.

\end{itemize}

\end{lemma}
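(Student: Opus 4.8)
The plan is to reduce all four assertions to properties of the scalar field $w(x,t)$ solving the inviscid Burgers problem \eqref{burgersfangcheng}, and then transport those properties to $(\bar\rho,\bar u,\bar\theta)$ through the algebraic relations \eqref{gouzaoxishubo}. Since the datum $w(x,0)$ is smooth, nondecreasing, and interpolates monotonically from $w_-$ to $w_+=w_-+\delta_r$ (because $C_q\int_0^{+\infty}y^q\mathrm e^{-y}\,\mathrm dy=1$), the method of characteristics yields a unique global smooth solution whose characteristics $x=x_0+t\,w_0(x_0)$ never collide. Implicit differentiation along them gives $w_x=w_0'(x_0)/(1+t\,w_0'(x_0))\ge 0$, so $w$ is nondecreasing in $x$ with $w_-\le w\le w_+$ for all $t$; in particular the total variation $\int_{\mathbb R}w_x\,\mathrm dx=\delta_r$ is conserved in time. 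First I would record these qualitative facts, which already furnish the sign information needed later.

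For the quantitative decay in part $\mathrm{(ii)}$ I would estimate $w_x$ and $w_{xx}$ directly from this characteristic representation. The bound $w_x=w_0'/(1+t\,w_0')\le 1/t$ combined with $w_x\le \|w_0'\|_{L^\infty}\le C\delta_r\alpha$ gives, after evaluation at time $1+t$, the pointwise estimate $\|\partial_x w(\cdot,1+t)\|_{L^\infty}\le C\min\{\delta_r\alpha,(1+t)^{-1}\}$, the two entries reflecting the initial-data regime and the self-similar spreading regime. Interpolating this against the conserved mass $\|\partial_x w\|_{L^1}=\delta_r$ via $\|f\|_{L^p}\le\|f\|_{L^1}^{1/p}\|f\|_{L^\infty}^{1-1/p}$ reproduces exactly \eqref{shuaijian-rare-yijiedao}, since $\delta_r^{1/p}(\delta_r\alpha)^{1-1/p}=\delta_r\alpha^{1-1/p}$ and $\delta_r^{1/p}(1+t)^{-1+1/p}$ match the two terms of the $\min$. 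The second-derivative estimate \eqref{shuaijian-rare-erjiedao} follows by differentiating the characteristic identity once more; this is where the exponent $q\ge 1$ in the datum of \eqref{burgersfangcheng} is used, to keep $w_{xx}$ integrable near the slow edge of the fan.

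Next I would transport these to $(\bar\rho,\bar u,\bar\theta)$. The relations \eqref{gouzaoxishubo} make the profile a smooth, strictly monotone function of $w$ along the $3$-rarefaction curve: differentiating $\bar\rho^{1-\gamma}\bar\theta=\rho_+^{1-\gamma}\theta_+$ and the integral formula for $\bar u$ gives $\bar\theta_x=(\gamma-1)(\bar\theta/\bar\rho)\bar\rho_x$ and $\bar u_x=(\sqrt{R\gamma\bar\theta}/\bar\rho)\bar\rho_x$. Because $\bar\rho,\bar\theta$ stay in a fixed compact set bounded away from $0$, all three spatial derivatives are mutually comparable with constants depending only on the end states, which yields the sign conditions and the bound $\bar\theta_x\le C\bar u_x$ of part $\mathrm{(i)}$; differentiating once more produces the $\bar u_x^2$ term in the second-derivative bound of $\mathrm{(i)}$. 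Since $\bar u_x+\tfrac12\sqrt{R\gamma/\bar\theta}\,\bar\theta_x=\partial_x w(\cdot,1+t)$ forces $\bar u_x\simeq \partial_x w$, the $L^p$ bounds of $\mathrm{(ii)}$ descend verbatim from those for $w$. Part $\mathrm{(iii)}$ is then immediate: the slowest characteristic issues from $x_0=0$ with speed $w_-=\lambda_3(\rho_-,u_-,\theta_-)=u_-+\sqrt{R\gamma\theta_-}$, so if $x\le(u_-+\sqrt{R\gamma\theta_-})(1+t)$ then $w(x,1+t)=w_-$ and \eqref{gouzaoxishubo} forces $(\bar\rho,\bar u,\bar\theta)=(\rho_-,u_-,\theta_-)$. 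Finally, $\mathrm{(iv)}$ follows because $w(x,1+t)$ converges uniformly to the self-similar Burgers fan $w^r(x/(1+t))$, which under \eqref{gouzaoxishubo} corresponds precisely to $(\rho^r,u^r,\theta^r)(x/(1+t))$.

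The main obstacle is the sharp $L^p$ decay of part $\mathrm{(ii)}$, specifically capturing simultaneously the correct dependence on the wave strength $\delta_r$, the shape parameter $\alpha$, and the exponent $q$, while tracking both the short-time and long-time regimes encoded in the two-sided $\min$. Controlling $w_{xx}$ (equivalently $\bar u_{xx}$) uniformly in time is the delicate point, since it is exactly the estimate \eqref{shuaijian-rare-erjiedao} that later bounds the interaction errors between the rarefaction wave and the boundary layer. Because the statement is quoted from \cite{huangfm-qin-jde-2009}, I would cite that reference for the detailed Burgers computation and limit the present verification to checking that the algebraic transfer through \eqref{gouzaoxishubo} preserves all estimates in this half-line setting.
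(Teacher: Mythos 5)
The paper offers no proof of this lemma --- it is quoted directly from \cite{huangfm-qin-jde-2009} --- so there is no internal argument to compare against, and your sketch is precisely the standard characteristic-method proof from that reference (going back to Matsumura--Nishihara): global smooth monotone solution of \eqref{burgersfangcheng}, the bound $\|w_x\|_{L^\infty}\leq C\min\{\delta_r\alpha,(1+t)^{-1}\}$ interpolated against the conserved $L^1$ mass $\delta_r$, and the algebraic transfer through \eqref{gouzaoxishubo}. All the reductions you carry out (the identities $\bar{\theta}_x=(\gamma-1)(\bar{\theta}/\bar{\rho})\bar{\rho}_x$, $\bar{u}_x=(\sqrt{R\gamma\bar{\theta}}/\bar{\rho})\bar{\rho}_x$, the comparability $\bar{u}_x\simeq w_x$, and the finite-speed argument for part (iii)) are correct, and deferring the delicate $w_{xx}$ computation involving the exponent $q$ to the cited reference is exactly what the paper itself does.
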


Now we can give the local stability of the 3-rarefaction wave:
\begin{theorem}\label{wendingxing-xishubo}
Assume $ (\rho_+, u_+, \theta_+) \in \Omega^-_{\mathrm{sub}} \cup \{u\geq 0\} $, $ \theta_- < \theta_+ $ and
\begin{equation*}
  u_- = u_+ + \int_{\rho_+}^{(\theta_-/\theta_+)^{\frac{1}{\gamma-1}  }\rho_+} \sqrt{R \gamma \rho_+^{1-\gamma} \theta_+} \;\xi^{\frac{\gamma-3}{2}} \,\mathrm{d}\xi \geq - \sqrt{R\gamma \theta_-}  \,.
\end{equation*}
In addition, the dielectric constant $\varepsilon$ satisfies
\begin{equation}
0<\varepsilon<\bar{C}
\end{equation}
for some positive constant $\bar{C}$ $($depending only on $|u_\pm|$ and $ \theta_\pm $$)$.
There is a suitably small constant $ \varepsilon_2 >0 $ which is independent of $T$, such that if
\begin{equation}
  \alpha + \| (\rho_0,u_0,\theta_0,E_0,b_0) - (\bar{\rho}_0,\bar{u}_0,\bar{\theta}_0, 0, 0) \|_{H^1} \leq \varepsilon_2,
\end{equation}
then the outflow problem \eqref{yuanfangcheng}-\eqref{yuanfangcheng-chuzhi} and \eqref{yuanfangcheng-chuzhi-shuju}-\eqref{yuanfangcheng-bianzhi} has a unique global solution $ (\rho, u, \theta, E, b)(x,t) $.
Furthermore,
\begin{equation}
  \lim_{t\rightarrow +\infty} \sup_{x\in \mathbb{R}_+} \left| (\rho,u,\theta,E,b)(x,t) - (\rho^r,u^r,\theta^r,0,0)(x/t) \right| = 0.
\end{equation}

\end{theorem}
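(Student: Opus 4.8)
The plan is to prove Theorem \ref{wendingxing-xishubo} by the energy method together with a standard continuation argument, treating the smooth rarefaction profile as an approximate solution whose defect is controlled by Lemma \ref{xishubo-shuaijianlemma}. First I would pass to the perturbation variables $(\phi,\psi,\zeta,E,b):=(\rho-\bar\rho,\,u-\bar u,\,\theta-\bar\theta,\,E,\,b)$, recalling $\bar E=\bar b=0$, and derive the perturbation system. Since $(\bar\rho,\bar u,\bar\theta)$ solves the \emph{inviscid} Euler equations, the resulting equations carry inhomogeneous source terms of viscous and heat-conductive type, namely $\mu\bar u_{xx}$, $\kappa\bar\theta_{xx}$ and $\mu\bar u_x^2$, together with the Lorentz-force coupling $(E+ub)b$ and $(E+ub)^2$. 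The relevant boundary data become $\psi(0,t)=\zeta(0,t)=0$ (because $(\bar u,\bar\theta)(0,t)=(u_-,\theta_-)$ matches the physical boundary values) and $(\sqrt\varepsilon E-b)(0,t)=0$, while $\phi(0,t)$ stays free, as is appropriate for the outflow regime $\lambda_3(\rho_-,u_-,\theta_-)\ge0$.

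Next I would run the a priori estimate under the assumption that $N(T):=\sup_{0\le t\le T}\|(\phi,\psi,\zeta,E,b)(t)\|_{H^1}$ is suitably small; by local existence it suffices to close a uniform bound on $N(T)$. The zero-order step follows the scheme of Lemma \ref{lemmadijieenergy}: one forms the entropy-type functional $\int_{\mathbb{R}_+}\bigl(\rho\eta+\tfrac12\varepsilon E^2+\tfrac12 b^2+\varepsilon E\bar u b\bigr)\,\mathrm{d}x$, with $\eta$ the relative energy (positive definite for small data), and differentiates in $t$. The expansiveness $\bar u_x\ge0$ of the rarefaction wave --- in sharp contrast to the boundary-layer weight $\tilde u_x$, whose sign is uncontrolled near $x=0$ --- furnishes the \emph{good} term $\int_0^t\int_{\mathbb{R}_+}\bar u_x|(\phi,\psi,\zeta)|^2$ for free. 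The electromagnetic difficulty is the one described in the Introduction: there is no standalone dissipation for $b$, so the bad coupling term $-\int_0^t\int_{\mathbb{R}_+}(E+\psi b+\bar u b)\psi b$ must be packaged with the Maxwell terms to yield the compound good term $\int_0^t\int_{\mathbb{R}_+}(E+ub)^2$. The boundary contributions generated by the integrations by parts are handled using $\psi(0,t)=\zeta(0,t)=0$ together with the electromagnetic condition $(\sqrt\varepsilon E-b)(0,t)=0$; under the restriction $0<\varepsilon<\bar C$ this produces a boundary good term of size $\int_0^t\sqrt\varepsilon E^2(0,\tau)\,\mathrm{d}\tau$ that absorbs the boundary bad terms whose unfavorable sign stems from $u_-<0$. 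The profile source terms are estimated via the $L^p$-decay and $\alpha$-smallness bounds \eqref{shuaijian-rare-yijiedao}--\eqref{shuaijian-rare-erjiedao}.

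I would then carry out the first-order estimate by differentiating the system once in $x$ and pairing with the derivatives. The only genuinely new feature is the appearance of the boundary traces $E_x(0,t)$ and $b_x(0,t)$; these I would rewrite using the Maxwell equations $\eqref{yuanfangcheng}_4$--$\eqref{yuanfangcheng}_5$ evaluated at $x=0$ together with the relation $(\sqrt\varepsilon E-b)(0,t)=0$ (cf. \eqref{bdycons-diancichang} and \eqref{tiboundary-riemann-inv}), converting them into controllable quantities. Adding the zero- and first-order inequalities and choosing $\alpha$, $\varepsilon$ and $N(T)$ small enough that all remaining bad terms are absorbed, I obtain an estimate of the form $\|(\phi,\psi,\zeta,E,b)(t)\|_{H^1}^2+\int_0^t\bigl(\|(\phi_x,\psi_x,\zeta_x)\|^2+\|E+ub\|^2+\cdots\bigr)\,\mathrm{d}\tau\le C\|(\phi,\psi,\zeta,E,b)(0)\|_{H^1}^2+C\alpha$, hence a uniform bound $N(T)\le C\varepsilon_2$ and, by continuation, the global solution.

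Finally, the large-time behavior follows by the usual argument: the uniform $H^1$ bound and the time-integrability of the dissipation force $\frac{\mathrm{d}}{\mathrm{d}t}\|(\phi_x,\psi_x,\zeta_x)(t)\|^2\in L^1(0,\infty)$, so $\|(\phi_x,\psi_x,\zeta_x)(t)\|\to0$ and then $\sup_x|(\phi,\psi,\zeta)|\to0$ by Sobolev embedding; combined with Lemma \ref{xishubo-shuaijianlemma}$\mathrm{(iv)}$ this yields convergence of $(\rho,u,\theta)$ to $(\rho^r,u^r,\theta^r)(x/t)$, while the compound dissipation $\int_0^{\infty}\!\!\int_{\mathbb{R}_+}(E+ub)^2$ together with the uniform bounds drives $(E,b)\to0$. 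The main obstacle throughout is precisely the regularity-loss / lack-of-damping feature: because only the combination $E+ub$ is dissipated and $b$ possesses no direct decay mechanism, both the construction of the compound good term and the extraction of the boundary good term $\int_0^t\sqrt\varepsilon E^2(0,\tau)\,\mathrm{d}\tau$ are delicate, and it is this that forces the technical hypothesis $0<\varepsilon<\bar C$.
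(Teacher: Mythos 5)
Your proposal is correct and follows essentially the same route as the paper: the paper proves only the superposition case (Theorem \ref{wendingxing-fuhebo}) in detail and notes that Theorem \ref{wendingxing-xishubo} is the simpler specialization with no boundary layer, which is exactly what you carry out — perturbation around the smooth approximate rarefaction wave, the entropy functional augmented by $\tfrac12\varepsilon E^2+\tfrac12 b^2+\varepsilon E\bar u b$, the compound dissipation $\int\!\!\int(E+ub)^2$ in place of separate damping for $b$, the boundary good term $\int_0^t\sqrt{\varepsilon}E^2(0,\tau)\,\mathrm{d}\tau$ extracted from $(\sqrt{\varepsilon}E-b)(0,t)=0$ under $\varepsilon<\bar C$, rewriting $E_x(0,t)$ and $b_x(0,t)$ via the Maxwell equations at the boundary, and the standard continuation plus Sobolev argument for the large-time behavior. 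The only cosmetic difference is that, absent the boundary layer, the $M_0$-splitting of $\tilde u_x$-weighted integrals is not needed, which you correctly observe.
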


\subsection{Superposition of the boundary layer and rarefaction wave}\label{subsection2.3}

\noindent Now let $ (\rho_+,u_+,\theta_+)\in \Omega^-_{\mathrm{sub}} \cup \{u \geq 0 \} $.
For $ (\rho_*,u_*,\theta_*) \in R_3(\rho_+,u_+,\theta_+) \cap (\Omega^-_{\mathrm{sub}} \cup \Gamma^-_{\mathrm{tran}}) $,
let $ S_* := \{ (\rho,u,\theta)\, | \, \rho u = \rho_* u_* \} $ be a family of surfaces.
From Section \ref{subsection2.2}, we know that
for each point $ (\rho_*,u_*,\theta_*) $ there exists a uniquely determined 3-rarefaction wave connecting it and $ (\rho_+,u_+,\theta_+) $.
Among the three variables, $ \rho_* $, $ u_* $ and $ \theta_* $, just one is independent, the other two can be determined accordingly. Precisely speaking, if let $ \rho_* $ be independent, then
\begin{equation}
 \rho_{*}<\rho_{+}, \qquad \rho_{*}^{1-\gamma} \theta_{*}=\rho_{+}^{1-\gamma} \theta_{+}, \qquad u_{*}=u_{+}+\int_{\rho_{+}}^{\rho_{*}} \sqrt{R \gamma \rho_{+}^{1-\gamma} \theta_{+}} \;\xi^{\frac{\gamma-3}{2}} \,\mathrm{d} \xi .
\end{equation}
Obviously, both $ u_* $ and $ \theta_* $ are strictly increasing and continuously differentiable with respect to $ \rho_* $.
From Section \ref{subsection2.1}, we can easily know that each boundary layer belongs to one surface of the family.
Consider the family $ S_* $ to be a function of $ \rho_* $, then
\begin{equation}
  \frac{\mathrm{d} S_{*}}{\mathrm{d} \rho_{*}}=u_{*}+\rho_{*} \frac{\mathrm{d} u_{*}}{\mathrm{d} \rho_{*}} = u_{*} + \sqrt{R \gamma \theta_{*}} .
\end{equation}
Due to $ (\rho_*,u_*,\theta_*) \in  \Omega^-_{\mathrm{sub}} \cup \Gamma^-_{\mathrm{tran}}  $, we know that $ R_3(\rho_+,u_+,\theta_+) $ and each one of $ S_* $ owns a unique intersection point, i.e., $ (\rho_*,u_*,\theta_*) $.
Moreover, all of $ S_* $ never intersect each other,
especially when $ (\rho_*,u_*,\theta_*) \in \Gamma^-_{\mathrm{tran}}  $.
For Case 1: if
\begin{equation}
  0 \neq \left|  u_- - u_+ - \int_{\rho_+}^{(\theta_-/\theta_+)^{\frac{1}{\gamma-1}  }\rho_+} \sqrt{R \gamma \rho_+^{1-\gamma} \theta_+} \;\xi^{\frac{\gamma-3}{2}} \,\mathrm{d}\xi   \right|  \ll 1 ,
\end{equation}
then it is expected that there exists a unique point $ (\rho_*,u_*,\theta_*) \in R_3(\rho_+,u_+,\theta_+)   \cap \Omega^-_{\mathrm{sub}}  $ such that
$\rho_*$, $u_*$, $\theta_*$, $ u_- $ and $ \theta_- $ satisfy \eqref{wentaijie-fangcheng-1} just when $(\rho_+,u_+,\theta_+)$ is replaced by $ (\rho_*,u_*,\theta_*) $ there.
For Case 2: when $ (\rho_*,u_*,\theta_*) \in R_3(\rho_+,u_+,\theta_+)   \cap \Gamma^-_{\mathrm{tran}} $, it holds that $ u_*  =  - \sqrt{R \gamma \theta_*} $, which means $ (\rho_*,u_*,\theta_*) $ is unique too.

Let
\begin{equation}
  (\hat{\rho}, \hat{u}, \hat{\theta})=(\tilde{\rho}, \tilde{u}, \tilde{\theta})+(\bar{\rho}, \bar{u}, \bar{\theta})-\left(\rho_{*}, u_{*}, \theta_{*}\right),
\end{equation}
with $ \hat{E}=\hat{b}=0 $, and the strength of boundary layer denoted by $ \delta  = \left| (u_*-u_-, \theta_* - \theta_-) \right| $.
Under the preliminaries above, we can state the third result.

\begin{theorem}\label{wendingxing-fuhebo}
Assume $ (\rho_+, u_+, \theta_+) \in \Omega^-_{\mathrm{sub}} \cup \{u\geq 0\} $,
$ (\rho_*,u_*,\theta_*) \in R_3(\rho_+,u_+,\theta_+) \cap (\Omega^-_{\mathrm{sub}} \cup \Gamma^-_{\mathrm{tran}}) $ and
$\rho_*$, $u_*$, $\theta_*$, $ u_- $, $ \theta_- $ satisfy \eqref{wentaijie-fangcheng-1} just when $ (\rho_+,u_+,\theta_+) $ there is replaced by $ (\rho_*,u_*,\theta_*) $.
In addition, the dielectric constant $\varepsilon$ satisfies
\begin{equation}\label{jiedianchangshuxiao}
0<\varepsilon<\bar{C}
\end{equation}
for some positive constant $\bar{C}$ $($depending only on $|u_\pm|$ and $ \theta_\pm $$)$.
There exist two small positive constants $ \delta_2 $ and $ \varepsilon_3 $ which are independent of $T$, such that if $ 0 < \delta  < \min \{\delta_0, \delta_2 \}$ and
\begin{equation}
  \alpha + \| (\rho_0,u_0,\theta_0,E_0,b_0) - (\hat{\rho}_0,\hat{u}_0,\hat{\theta}_0, 0, 0) \|_{H^1(\mathbb{R}_+)} \leq  \varepsilon_3  ,
\end{equation}
then the outflow problem \eqref{yuanfangcheng}-\eqref{yuanfangcheng-chuzhi} and \eqref{yuanfangcheng-chuzhi-shuju}-\eqref{yuanfangcheng-bianzhi} has a unique global solution $ (\rho, u, \theta, E, b)(x,t) $.
Furthermore,
\begin{equation}\label{dashijianxingwei-1}
  \lim_{t\rightarrow +\infty}\sup_{x\in \mathbb{R}_+} \left| (\rho,u,\theta)(x,t) - (\tilde{\rho},\tilde{u},\tilde{\theta})(x) - (\rho^r,u^r,\theta^r)({x}/{t})  +  (\rho_*,u_*,\theta_*) \right| = 0,
\end{equation}
and
\begin{equation}\label{dashijianxingwei-2}
  \lim_{t\rightarrow +\infty}\sup_{x\in \mathbb{R}_+} \left| (E,b)(x,t) - (0,0) \right| = 0.
\end{equation}

\end{theorem}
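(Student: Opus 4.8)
The plan is to establish the result by the standard energy method for the asymptotic stability of composite waves, the principal novelty being the treatment of the electromagnetic coupling. I would argue in the perturbation framework together with a continuation argument, deferring the (routine) local-in-time existence to a standard iteration scheme.

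First I would introduce the perturbation $(\phi,\psi,\zeta,E,b):=(\rho-\hat\rho,\,u-\hat u,\,\theta-\hat\theta,\,E,\,b)$, recalling that $\hat E=\hat b=0$, and derive the system it satisfies. Since $(\hat\rho,\hat u,\hat\theta)$ is a superposition of the boundary layer and the smoothed rarefaction wave rather than an exact solution of \eqref{yuanfangcheng}, inserting it produces inhomogeneous error terms of three kinds: the boundary-layer defect, exponentially localized near $x=0$ by \eqref{shuaijian-feituihua}; the rarefaction defect, controlled by Lemma \ref{xishubo-shuaijianlemma}; and the interaction between $(\tilde\rho,\tilde u,\tilde\theta)$ and $(\bar\rho,\bar u,\bar\theta)$. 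Because the boundary layer concentrates near the boundary while the rarefaction fan propagates to the right with strictly positive speed (Lemma \ref{xishubo-shuaijianlemma}(iii)), all of these errors are time--space integrable and small in $\delta+\alpha$.

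The core is the uniform a priori estimate under the smallness hypothesis $N(T):=\sup_{0\le t\le T}\|(\phi,\psi,\zeta,E,b)(t)\|_{H^1}\ll1$, which I would obtain in four stages. The zero-order estimate uses an entropy-type functional $\eta$ as in Lemma \ref{lemmadijieenergy}; its key step is to multiply the Maxwell part by suitable combinations of $E$ and $b$ so that, in place of the unavailable dissipation $\int_0^t\!\int_{\mathbb{R}_+}b^2$, one recovers the compound good term $\int_0^t\!\int_{\mathbb{R}_+}(E+\hat u b+\psi b)^2$. Next, exploiting the boundary condition $(\sqrt\varepsilon E-b)(0,t)=0$ and the boundedness of $\varepsilon$, I would generate the favorable boundary contribution $\tfrac34\int_0^t\sqrt\varepsilon E^2(0,\tau)\,\mathrm{d}\tau$ (alongside $\int_0^t\phi^2(0,\tau)\,\mathrm{d}\tau$) and use it to absorb adverse boundary terms such as $\delta^3\int_0^t b^2(0,\tau)\,\mathrm{d}\tau$. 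To handle the weighted terms $-\int_0^t\!\int_{\mathbb{R}_+}\tfrac12\tilde u_x(\varepsilon E^2+b^2)$, I would split the spatial integral at the threshold $x=M_0$ beyond which $\tilde u_x\ge0$, discard the favorable far piece, and dominate the near piece by the boundary good term. Finally, differentiating the system once in $x$ and repeating the multiplier procedure gives the first-order estimate, where the new boundary quantities $E_x(0,t)$ and $b_x(0,t)$ are first rewritten via the Maxwell equations and the composite boundary condition before being controlled.

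Combining the four stages and choosing $\delta,\alpha,\varepsilon$ and the initial perturbation sufficiently small, all bad terms are absorbed and one arrives at a $T$-independent bound on $\|(\phi,\psi,\zeta,E,b)(t)\|_{H^1}^2$ together with the full dissipation; a continuation argument then yields the global solution. The uniform dissipation forces $\|(\psi_x,\zeta_x)(t)\|\to0$ and $\|(E+\hat u b)(t)\|\to0$, and a standard argument showing the relevant time-derivatives are integrable in $t$ upgrades this to $\|(\phi,\psi,\zeta)(t)\|_{H^1}\to0$ together with the decay of $(E,b)$, whence the embedding $H^1\hookrightarrow L^\infty$ delivers the convergences \eqref{dashijianxingwei-1}--\eqref{dashijianxingwei-2}. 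I expect the main obstacle to be precisely the interplay of the zero-order and boundary stages: the absence of any direct decay mechanism for the magnetic field $b$ means that every occurrence of $b$---at the boundary, under the weight $\tilde u_x$, and in the nonlinear products---must be funneled through the single compound good term $(E+\hat u b+\psi b)^2$ and the boundary term $\sqrt\varepsilon E^2(0,\tau)$. Arranging these two good terms to simultaneously dominate all such $b$-contributions is delicate, and is exactly where the hypothesis $0<\varepsilon<\bar C$ in \eqref{jiedianchangshuxiao} enters.
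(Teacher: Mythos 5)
Your proposal follows essentially the same route as the paper: the same perturbation around $(\hat\rho,\hat u,\hat\theta)=(\tilde\rho,\tilde u,\tilde\theta)+(\bar\rho,\bar u,\bar\theta)-(\rho_*,u_*,\theta_*)$, the same zero-order entropy estimate funnelling all $b$-contributions into the compound good term $(E+\psi b+\hat u b)^2$ and the boundary good term $\tfrac34\int_0^t\sqrt{\varepsilon}E^2(0,\tau)\,\mathrm{d}\tau$, the same splitting of the $\tilde u_x$-weighted integral at $M_0$, the same rewriting of $E_x(0,t)$ and $b_x(0,t)$ via the Maxwell equations and the composite boundary condition in the first-order estimates, and the same continuation-plus-Sobolev conclusion. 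The only cosmetic imprecision is the claim that $\|(\phi,\psi,\zeta)(t)\|_{H^1}\to0$: the paper only shows $\|(\phi_x,\psi_x,\zeta_x,E_x,b_x)(t)\|\to0$ and combines this with the uniform $L^2$ bound through the interpolation inequality $\|f\|_{L^\infty}\le\sqrt2\|f\|^{1/2}\|f_x\|^{1/2}$, which is all that is needed for \eqref{dashijianxingwei-1}--\eqref{dashijianxingwei-2}.
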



\begin{remark}
From \eqref{jiedianxishu-jieguo} for the case $ M_+ = 1 $, we can take the constant $ \bar{C} $ in \eqref{jiedianchangshuxiao} as
\begin{equation}\label{barc}
 \bar{C}  = \frac{1}{ 64 \max\{\left| u_- \right|, \left| u_+ \right|\}   \cdot \left( \max\{\left| u_- \right|, \left| u_+ \right|\}   +   \sqrt{ R \gamma \max\{\theta_-, \theta_+ \}} \right)  }   .
\end{equation}
Then for each given $\varepsilon$ satisfying the condition \eqref{jiedianchangshuxiao}, our system \eqref{yuanfangcheng} is explicitly well-defined.
On the one hand, when we take $ \max\{\left| u_- \right|, \left| u_+ \right|\}  $ suitably small,
 the dielectric
constant $ \varepsilon $ can be large enough, which can be seen from the conditions \eqref{jiedianchangshuxiao} and \eqref{barc} directly.
This fact can relax the requirement of smallness of $ \varepsilon $.
On the other hand, the conditions \eqref{jiedianchangshuxiao} and \eqref{barc} together can relax the restriction on $ \max\{\left| u_- \right|, \left| u_+ \right|\} $ as long as the dielectric constant $ \varepsilon $ is suitably small.
Thus, an interesting problem occurs,
that is
how to remove the technical condition \eqref{jiedianchangshuxiao} in future.
\end{remark}


\begin{remark}
For the compressible non-isentropic Navier-Stokes-Maxwell equations, the asymptotic stability of the wave patterns to the inflow problem and the impermeable wall problem can also be taken into account and remains to be studied in future.

\end{remark}



\section{Proofs of the theorems}\label{proof-theorem2.3}

\noindent It is easy to know that the proof of Theorems \ref{wendingxing-bianjieceng}, \ref{wendingxing-xishubo} is similar to and simpler than that of Theorem \ref{wendingxing-fuhebo} below, thus the details of Theorems \ref{wendingxing-bianjieceng}, \ref{wendingxing-xishubo} are omitted here.
And it is noted that Theorem \ref{wendingxing-fuhebo} concerns two cases of the boundary layer: one is non-degenerate, the other is degenerate.
If the boundary layer is not degenerate, i.e. decays exponentially, then employing a Poincar$\acute{\rm e}$-type inequality,
one easily knows that all the terms concerning the boundary layer are easier to be controlled than the counterparts of the degenerate case.
Hence, we only consider the proof of Theorem \ref{wendingxing-fuhebo} concerning the superposition of the degenerate boundary layer and the 3-rarefaction wave.

Recall
\begin{equation*}
  (\hat{\rho}, \hat{u}, \hat{\theta})(x,t)  =  (\tilde{\rho}, \tilde{u}, \tilde{\theta})(x)  +  (\bar{\rho}, \bar{u}, \bar{\theta})(x,t) - \left(\rho_{*}, u_{*}, \theta_{*}\right).
\end{equation*}
After some simple calculations, we can obtain
\begin{equation*}
\left\{\begin{aligned}
& \hat{\rho}_{t}+\hat{u} \hat{\rho}_{x}+\hat{\rho} \hat{u}_{x}=\hat{f},    \qquad  x \in \mathbb{R}_{+},\;\; t>0,     \\[2mm]
& \hat{\rho}\left(\hat{u}_{t} + \hat{u} \hat{u}_{x}\right) + \hat{p}_{x} = \mu \tilde{u}_{x x} + \hat{g},    \\[2mm]
& \frac{R}{\gamma-1} \hat{\rho}\left(\hat{\theta}_{t} + \hat{u} \hat{\theta}_{x}\right) + \hat{p} \hat{u}_{x} = \kappa \tilde{\theta}_{x x} + \mu \tilde{u}_{x}^{2} + \hat{h},    \\[2mm]
& (\hat{\rho}, \hat{u}, \hat{\theta})(0, t) = \left(\rho_{-}, u_{-}, \theta_{-}\right),  \qquad   (\hat{\rho}, \hat{u}, \hat{\theta})(+\infty, t) = \left(\rho_{+}, u_{+}, \theta_{+}\right),
\end{aligned}\right.
\end{equation*}
where $ \rho_- := (\rho_* u_*)/u_- $, and $ \rho_* $, $ u_* $, $ \theta_* $, $ u_- $, $ \theta_- $ satisfy \eqref{wentaijie-fangcheng-1}. Here $ \hat{p} := p(\hat{\rho},\hat{\theta}) = R \hat{\rho}\hat{\theta}$ and
\begin{equation*}
\left\{\begin{aligned}
& \hat{f} = \left(\bar{u}-u_{*}\right) \tilde{\rho}_{x}+\left(\bar{\rho}-\rho_{*}\right) \tilde{u}_{x}+\left(\tilde{u}-u_{*}\right) \bar{\rho}_{x}+\left(\tilde{\rho}-\rho_{*}\right) \bar{u}_{x},    \\[1mm]
& \hat{g} = \hat{\rho}\left[\left(\bar{u}-u_{*}\right) \tilde{u}_{x}+\left(\tilde{u}-u_{*}\right) \bar{u}_{x}\right]+\left(\bar{\rho}-\rho_{*}\right) \tilde{u} \tilde{u}_{x}+(\hat{p}-\tilde{p}-\bar{p})_{x}-\frac{\tilde{\rho}-\rho_{*}}{\bar{\rho}} \bar{p}_{x},     \\[1mm]
& \hat{h} = \frac{R}{\gamma-1} \hat{\rho}\left[\left(\bar{u}-u_{*}\right) \tilde{\theta}_{x}+\left(\tilde{u}-u_{*}\right) \bar{\theta}_{x}\right]+\frac{R}{\gamma-1}\left(\bar{\rho}-\rho_{*}\right) \tilde{u} \tilde{\theta}_{x}      \\[1mm]
& \qquad    + (\hat{p}-\tilde{p}) \tilde{u}_{x} + (\hat{p}-\bar{p}) \bar{u}_{x} - R \bar{\theta}\left(\tilde{\rho}-\rho_{*}\right) \bar{u}_{x} .
\end{aligned}\right.
\end{equation*}
Combining $ \bar{u}_x \geq 0 $ and \eqref{wentaijie-fangcheng-1}, we obtain that
\begin{equation}
|\hat{f}|  +  |\hat{g}|   +  |\hat{h}| \leq  C \left( (\bar{u} - u_*) \left| \tilde{u}_x \right|   +  \left| \tilde{u} - u_* \right| \bar{u}_x \right) .
\end{equation}
Define the perturbation as
\begin{equation}
 (\phi, \psi, \zeta, E, b)(x,t) = ( \rho-\hat{\rho}, u-\hat{u}, \theta - \hat{\theta}, E, b )(x,t).
\end{equation}
Then we transform the initial-boundary value problem \eqref{yuanfangcheng}-\eqref{yuanfangcheng-chuzhi} and \eqref{yuanfangcheng-chuzhi-shuju}-\eqref{yuanfangcheng-bianzhi} as
\begin{equation}\label{raodong-fuhebo}
 \left\{\begin{aligned}
& \phi_{t}+u \phi_{x}+\rho \psi_{x}=f,      \qquad  x \in \mathbb{R}_{+},\;\;  t>0,       \\[2mm]
& \rho\left(\psi_{t}+u \psi_{x}\right)+(p-\hat{p})_{x}=\mu \psi_{x x}  - (E + \psi b + \hat{u} b)b  + g  ,      \\[2mm]
& \frac{R}{\gamma-1} \rho\left(\zeta_{t}+u \zeta_{x}\right)+p \psi_{x} = \kappa \zeta_{x x}  +\mu \psi_{x}^{2}   + (E + \psi b + \hat{u} b)^2    + h,     \\[2mm]
& \varepsilon E_t - b_x + E + \psi b + \hat{u}b =0,    \\[2mm]
& b_t - E_x = 0,
 \end{aligned}\right.
\end{equation}
with the initial data
\begin{equation}\label{raodongchuzhi}
  (\phi_{0}, \psi_{0}, \zeta_{0}, E_0, b_0)(x) := (\phi, \psi, \zeta, E, b)(x, 0) \rightarrow(0,0,0,0,0), \quad  \text { as } x \rightarrow +\infty,
\end{equation}
and the boundary condition
\begin{equation}\label{raodongbdycon}
(\phi, \psi, \zeta, \sqrt{\varepsilon} E - b )(0, t)=(\rho(0,t) - \rho_-,0,0,0),
\end{equation}
where
\begin{equation}\label{raodongyuxiang-fuhebo}
\left\{\begin{aligned}
& f = -\hat{u}_{x} \phi-\hat{\rho}_{x} \psi  - \hat{f},       \\[1mm]
& g = -\rho \hat{u}_{x} \psi+\hat{p}_{x} \frac{\phi}{\hat{\rho}}-\mu \tilde{u}_{x x} \frac{\phi}{\hat{\rho}}+\mu \bar{u}_{x x}-\frac{\rho}{\hat{\rho}} \hat{g},     \\[1mm]
& h = -\frac{R}{\gamma-1} \rho \hat{\theta}_{x} \psi    - R \rho \hat{u}_{x} \zeta    -( \kappa \tilde{\theta}_{x x}+\mu \tilde{u}_{x}^{2} ) \frac{\phi}{\hat{\rho}}      \\[1mm]
& \quad\;\;\,   +\kappa \bar{\theta}_{x x}+2 \mu \hat{u}_{x} \psi_{x}+2 \mu \tilde{u}_{x} \bar{u}_{x}+\mu \bar{u}_{x}^{2}-\frac{\rho}{\hat{\rho}} \hat{h}.
\end{aligned}\right.
\end{equation}
For interval $I \subset [0, \infty)$, we define a function space $X(I)$ as
$$X(I) := \left \{
(\phi, \psi, \zeta, E, b) \; \left|\,
\begin{aligned}
& \;  \left(\phi, \psi, \zeta, E, b \right) \in L^{\infty}\left(I ; H^{1}(\mathbb{R}_+) \right),      \\[2mm]
& \;  \left( \phi_{x}, E_{x}, b_{x} \right)  \in L^{2}\left(I ; L^{2}(\mathbb{R}_+) \right),       \\[2mm]
& \;  \left(\psi_{x}, \zeta_{x}\right) \in L^{2}\left(I ; H^{1}(\mathbb{R}_+) \right)
\end{aligned}\right.
\right\}.$$

To prove Theorem \ref{wendingxing-fuhebo} for brevity, we only devote ourselves to deriving the uniform {\it a priori} estimates of the perturbation from the superposition of the degenerate boundary layer and the 3-rarefaction wave to the initial-boundary value problem \eqref{raodong-fuhebo}-\eqref{raodongyuxiang-fuhebo}.



\begin{proposition}\label{prop-1}$(${\it A priori} estimates$)$
Suppose that the boundary layer in Theorem $\ref{wendingxing-fuhebo}$ is degenerate.
Let $(\phi,\psi,\zeta,E,b)$ $\in X(0,T)$ be a smooth solution to the problem \eqref{raodong-fuhebo}-\eqref{raodongyuxiang-fuhebo} on $0\leq t\leq T$ for $T>0$.
There exist a positive constant $\bar{C}$ $($depending only on $|u_\pm|$ and $ \theta_\pm $$)$
and two suitably small positive constants $\delta_3$ and $\varepsilon_0$ such that
if the dielectric constant $\varepsilon$ and the strength of boundary layer $ \delta $ satisfy $ \varepsilon<\bar{C} $, $\delta < \min \{\delta_0, \delta_3 \}$ and the following {\it a priori} assumption holds:
\begin{equation}\label{xianyanjiashe}
 \sup_{0\leq t\leq T} \| (\phi, \psi, \zeta, E, b)(t) \|_{H^1}  \leq \varepsilon_0,
\end{equation}
then $ (\phi, \psi, \zeta, E, b)(x,t) $ satisfies
\begin{align}\label{zongnengliang-1}
  &\quad   \sup_{0\leq t\leq T} \left[ \| (\phi, \psi, \zeta, \sqrt{\varepsilon}E, b) \|^2_{H^1}      + \sqrt{\varepsilon} E^2(0,t)  \right]
     +  \int_0^T  \| (\phi_x, \psi_x, \zeta_x, E_x, b_x , \psi_{xx}, \zeta_{xx}) \|^2  \,\mathrm{d}\tau           \nonumber  \\[2mm]
  &\quad    +  \int_0^T  \left[  \phi^2(0,\tau)   + \phi^2_x(0,\tau)         + \sqrt{\varepsilon} E^2(0,\tau)   + \varepsilon^{\frac{3}{2}} E_\tau^2(0,\tau)      \right]   \,\mathrm{d}\tau         \nonumber  \\[2mm]
  &\quad    +  \int_0^T  \left[  \| E + \psi b + \hat{u} b \|^2       +  \| \sqrt{\bar{u}_x} (\phi, \psi, \zeta, \sqrt{\varepsilon}E, b )  \|^2    \right]   \,\mathrm{d}\tau         \nonumber  \\[2mm]
  &   \leq C \left( \| (\phi_0, \psi_0, \zeta_0, E_0, b_0) \|^2_{H^1}    +  \delta   +  \alpha^{\frac{1}{10}} \right) .
\end{align}

\end{proposition}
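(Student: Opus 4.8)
The plan is to establish \eqref{zongnengliang-1} by the energy method together with the standard continuation argument based on the \emph{a priori} assumption \eqref{xianyanjiashe}, organized into zero-order and first-order estimates. Throughout, the smallness of $\delta$, $\alpha$, $\varepsilon_0$ and the quantitative bound $\varepsilon<\bar C$ from \eqref{jiedianchangshuxiao} will be exploited to absorb the nonlinear and wave-interaction remainders into the dissipative good terms. First I would treat the hydrodynamic part of \eqref{raodong-fuhebo}: multiplying $\eqref{raodong-fuhebo}_1$--$\eqref{raodong-fuhebo}_3$ by the multipliers associated with the relative entropy $\eta$ of Lemma \ref{lemmadijieenergy} and integrating over $\mathbb{R}_+\times[0,t]$ should produce the good terms $\|\sqrt{\bar u_x}(\phi,\psi,\zeta)\|^2$ and $\|(\psi_x,\zeta_x)\|^2$, together with a favorable boundary contribution $\phi^2(0,\tau)$ whose sign is guaranteed by the outflow condition $u_-<0$. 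The error terms generated by $f,g,h$ will be controlled by the decay and interaction properties of the boundary layer and rarefaction wave from Lemmas \ref{wentaijie-lemma-1} and \ref{xishubo-shuaijianlemma}, feeding the $\delta+\alpha^{1/10}$ on the right-hand side of \eqref{zongnengliang-1}.

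The main obstacle, and the heart of the zero-order estimate, is the electromagnetic part. Multiplying $\eqref{raodong-fuhebo}_4$ by $E$ and $\eqref{raodong-fuhebo}_5$ by $b$, the absence of any damping mechanism for $b$ means the naive procedure only yields the sign-indefinite cross term $-\int_0^t\int_{\mathbb{R}_+}(E+\psi b+\hat u b)\psi b\,\mathrm{d}x\mathrm{d}\tau$ rather than a usable $\int b^2$. Following the strategy flagged in the introduction, I would package this term with the contribution coming from the energy equation so that the sum closes as the single compound good term $\int_0^t\int_{\mathbb{R}_+}(E+\psi b+\hat u b)^2\,\mathrm{d}x\mathrm{d}\tau$, which is the only field dissipation available. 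This forces the cross term $\varepsilon E\tilde u b$ to be carried inside the energy density, so that one works with $\frac12\varepsilon E^2+\frac12 b^2+\varepsilon E\tilde u b$; positivity of this quadratic form requires $\varepsilon\tilde u^2<1$, which is exactly what $\varepsilon<\bar C$ provides.

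Next I would handle the boundary terms under the composite condition $(\sqrt{\varepsilon}E-b)(0,t)=0$. Using this relation between $E(0,t)$ and $b(0,t)$ together with the bound on $\varepsilon$, I would extract the boundary good term $\frac34\int_0^t\sqrt{\varepsilon}E^2(0,\tau)\,\mathrm{d}\tau$ out of the flux $\int_0^t\int_{\mathbb{R}_+}(\tfrac12\hat u\varepsilon E^2+\tfrac12\hat u b^2+Eb)_x\,\mathrm{d}x\mathrm{d}\tau$; here the adverse sign of $u_-$ is precisely what necessitates this device. That boundary good term is then spent to absorb the bad boundary contribution $\delta^3\int_0^t b^2(0,\tau)\,\mathrm{d}\tau$ arising when a Poincaré-type inequality is applied to $\int_0^t\int_{\mathbb{R}_+}\tilde u_x^2 b^2\,\mathrm{d}x\mathrm{d}\tau$, which cannot otherwise be controlled because $\int b^2$ is unavailable. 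For the weighted interior term $-\int_0^t\int_{\mathbb{R}_+}\tfrac12\tilde u_x(\varepsilon E^2+b^2)\,\mathrm{d}x\mathrm{d}\tau$ I would split $\mathbb{R}_+=[0,M_0]\cup[M_0,+\infty)$, discard the favorable far part where $\tilde u_x\ge0$ by Lemma \ref{wentaijie-lemma-1}, and dominate the near part by $\delta$ times the boundary good term.

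To close the estimate at the $H^1$ level, I would differentiate \eqref{raodong-fuhebo} once in $x$ and repeat the procedure, recovering the dissipation $\|(\phi_x,\psi_x,\zeta_x,E_x,b_x)\|^2$; the boundary quantities $E_x(0,t)$ and $b_x(0,t)$ are rewritten through the Maxwell structure and the boundary condition into controllable forms, while $\varepsilon^{3/2}E_\tau^2(0,\tau)$ is read off from $\eqref{raodong-fuhebo}_4$. The top-order norms $\|(\psi_{xx},\zeta_{xx})\|^2$ are then obtained directly by solving $\eqref{raodong-fuhebo}_2$--$\eqref{raodong-fuhebo}_3$ for the second derivatives and using the lower-order bounds already established. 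Summing the zero- and first-order estimates and choosing $\delta$, $\alpha$, $\varepsilon_0$ small and $\varepsilon<\bar C$, every bad term is absorbed and \eqref{zongnengliang-1} follows. The hard part will be the simultaneous management of the missing $b$-dissipation and the boundary effects: one must engineer both the single interior good term $(E+\psi b+\hat u b)^2$ and the single boundary good term $\sqrt{\varepsilon}E^2(0,t)$, and verify that the positivity of $\frac12\varepsilon E^2+\frac12 b^2+\varepsilon E\tilde u b$ and the absorption of $\delta^3\int_0^t b^2(0,\tau)\,\mathrm{d}\tau$ can both be secured under the single quantitative constraint $\varepsilon<\bar C$.
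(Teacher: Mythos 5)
Your plan reproduces the paper's zero-order argument essentially verbatim: the relative-entropy multiplier for the fluid part, the packaging of the Maxwell equations (multipliers $E$, $b$ and $\hat u b$) into the single compound good term $\int_0^t\int_{\mathbb{R}_+}(E+\psi b+\hat u b)^2\,\mathrm{d}x\mathrm{d}\tau$, the cross term $\varepsilon E\hat u b$ carried inside the energy density under a quantitative smallness of $\beta_1^2\varepsilon$, the boundary good term $\frac34\int_0^t\sqrt{\varepsilon}E^2(0,\tau)\,\mathrm{d}\tau$ extracted from the flux via $(\sqrt{\varepsilon}E-b)(0,t)=0$ and $1-\beta_1\sqrt{\varepsilon}\ge\frac34$, and the splitting of $-\int_0^t\int_{\mathbb{R}_+}\frac12\tilde u_x(\varepsilon E^2+b^2)\,\mathrm{d}x\mathrm{d}\tau$ at $x=M_0$; likewise your treatment of $E_x(0,t)=\sqrt{\varepsilon}E_t(0,t)$ and $b_x(0,t)=\varepsilon E_t(0,t)+(1+u_-\sqrt{\varepsilon})E(0,t)$ is exactly Lemma \ref{gujiExbxlemma}.

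The one step that does not go through as written is the last one. You propose to obtain the top-order bounds ``directly by solving $\eqref{raodong-fuhebo}_2$--$\eqref{raodong-fuhebo}_3$ for the second derivatives and using the lower-order bounds already established.'' Reading $\mu\psi_{xx}=\rho(\psi_t+u\psi_x)+(p-\hat p)_x+(E+\psi b+\hat u b)b-g$ elliptically requires $\int_0^T\|\psi_\tau\|^2\,\mathrm{d}\tau$, which is not among the quantities controlled by Lemmas \ref{lemmadijieenergy}--\ref{gujiphixlemma} and cannot be recovered from the equation without already knowing $\psi_{xx}\in L^2_{x,t}$. Moreover \eqref{zongnengliang-1} also demands $\sup_{0\le t\le T}\|(\psi_x,\zeta_x)\|^2\le C(\|(\phi_0,\psi_0,\zeta_0,E_0,b_0)\|^2_{H^1}+\delta+\alpha^{1/10})$ --- a quantitative improvement over the a priori bound $\varepsilon_0$ that is indispensable for closing the continuation argument --- and no elliptic reading of the equations produces it. The paper instead multiplies $\eqref{raodong-fuhebo}_2$ by $\psi_{xx}/\rho$ and $\eqref{raodong-fuhebo}_3$ by $\zeta_{xx}/\rho$ and integrates, so that $\frac{\mathrm{d}}{\mathrm{d}t}\frac12\|\psi_x\|^2$ and $\mu\int_{\mathbb{R}_+}\psi_{xx}^2/\rho\,\mathrm{d}x$ appear together on the left, delivering both missing pieces at once; the boundary term $\int_0^t\psi_x^2(0,\tau)\,\mathrm{d}\tau$ it creates is then interpolated away via $\|\psi_x\|_{L^\infty}^2\le 2\|\psi_x\|\|\psi_{xx}\|$. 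A smaller gloss of the same kind occurs in your $\phi_x$ step: differentiating the continuity equation produces the signless term $\phi_x\psi_{xx}/\rho^2$, which is cancelled only by adding the momentum equation multiplied by $\phi_x/\rho^2$ (the classical Kawashima--Matsumura--Nishihara device, Lemma \ref{gujiphixlemma}), not by merely ``repeating the procedure.''
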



Owing to a smallness assumption \eqref{xianyanjiashe} on $ \| (\phi, \psi, \zeta, E, b)(t) \|_{H^1}  $, a quantity $ \| (\phi, \psi, \zeta, E, b)(t) \|_{L^{\infty}} $ is also sufficiently small, i.e.
\begin{equation}\label{raodongwuqiongmo}
\|(\phi, \psi, \zeta, E, b)(t) \|_{L^{\infty}}\leq \sqrt{2}\varepsilon_0,
\end{equation}
where we have used the following Sobolev inequality
\begin{equation}\label{sobolevlwuqiong}
\|f\|_{L^{\infty}}\leq \sqrt{2}\|f\|^\frac{1}{2}\|f_x\|^\frac{1}{2},\qquad {\rm for}\,\,f(x)\in H^1(\mathbb{R_+}).
\end{equation}


Once Proposition \ref{prop-1} is proved, we can close the {\it a priori} assumption \eqref{xianyanjiashe}.
The global existence of the solution to the initial-boundary value problem \eqref{raodong-fuhebo}-\eqref{raodongyuxiang-fuhebo} then follows from the
standard continuation argument based on the local existence and the {\it a priori} estimates.
For $0<\varepsilon<\bar{C}$, the estimate \eqref{zongnengliang-1} and the equations \eqref{raodong-fuhebo} imply that
\begin{equation*}
\int_0^\infty\left(\|(\phi_x,\psi_x,\zeta_x,E_x,b_x)(t)\|^2 + \left| \frac{\rm d }{{\rm d} t} \|(\phi_x,\psi_x,\zeta_x,E_x,b_x)(t)\|^2 \right| \right){\rm{d}}t <\infty,
\end{equation*}
which easily leads to
\begin{equation}\label{twuqiongfanshuling}
\lim_{t\to{+\infty}}\|(\phi_x,\psi_x,\zeta_x,E_x,b_x)(t)\|^2=0.
\end{equation}
Then using the Sobolev inequality \eqref{sobolevlwuqiong}, together with \eqref{twuqiongfanshuling}, directly implies the large time behavior of the solutions: \eqref{dashijianxingwei-1} and \eqref{dashijianxingwei-2}.

Firstly, due to \eqref{raodongwuqiongmo} and the smallness of $ \varepsilon_0 $, it is easy to deduce the following properties, which will be frequently used in the sequel.



\begin{lemma}\label{gezhongjie}
If the strength of boundary layer $\delta  = \left| (u_* - u_-,\;\theta_* - \theta_-) \right| $ is small enough, then
\begin{itemize}
\item[$\mathrm{(i)}$] $\tilde{u}(x)$, $ \bar{u}(x,t) $, $ \hat{u}(x,t) $ and $ u(x,t) $ satisfy
\begin{gather}
 \frac{3}{2} u_- < \tilde{u}(x) < \frac{1}{2} u_- < 0,\qquad    \frac{5}{4} u_- < \bar{u}(x,t) \leq u_+,       \nonumber  \\[2mm]
 \left| \hat{u}(x,t) \right| < \frac{3}{2} \max \{ \left| u_- \right|, \left| u_+ \right| \} ,  \qquad   \left| u(x,t) \right| < 2 \max \{ \left| u_- \right|, \left| u_+ \right| \}.   \nonumber
\end{gather}

\item[$\mathrm{(ii)}$] $\tilde{\theta}(x)$, $ \bar{\theta}(x,t) $, $ \hat{\theta}(x,t) $ and $ \theta(x,t) $ satisfy
\begin{gather}
 0 < \frac{1}{2} \theta_- < \tilde{\theta}(x) < \frac{3}{2} \theta_- , \qquad   0 < \frac{3}{4} \theta_- < \theta_* < \bar{\theta}(x,t) \leq \theta_+ ,     \nonumber \\[2mm]
 \frac{1}{2} \min\{\theta_-, \theta_+\} < \hat{\theta}(x,t)  < \frac{5}{4} \max\{\theta_-, \theta_+\},      \nonumber \\[2mm]
 \frac{1}{4} \min\{\theta_-, \theta_+\} < \theta(x,t)  < \frac{3}{2} \max\{\theta_-, \theta_+\}.    \nonumber
\end{gather}

\item[$\mathrm{(iii)}$] $ \rho_* $, $ \tilde{\rho}(x) $, $ \bar{\rho}(x,t) $, $ \hat{\rho}(x,t) $ and $ \rho(x,t) $ satisfy
\begin{gather}
0 <   \rho_+ \left( \frac{3}{4} \frac{\theta_-}{\theta_+} \right)^{\frac{1}{\gamma-1}}    < \rho_* \leq \rho_+,        \nonumber  \\[2mm]
\frac{1}{2} \rho_+ \left( \frac{3}{4} \frac{\theta_-}{\theta_+} \right)^{\frac{1}{\gamma-1}} < \tilde{\rho}(x) < \frac{3}{2} \rho_+,  \qquad     \rho_+ \left( \frac{3}{4} \frac{\theta_-}{\theta_+} \right)^{\frac{1}{\gamma-1}}  < \bar{\rho}(x,t) \leq \rho_+,        \nonumber \\[2mm]
\frac{1}{2} \rho_+  \left( \frac{3}{4} \frac{\theta_-}{\theta_+} \right)^{\frac{1}{\gamma-1}}   < \hat{\rho}(x,t)  < \frac{3}{2} \rho_+,    \qquad    \frac{1}{4} \rho_+ \left( \frac{3}{4} \frac{\theta_-}{\theta_+} \right)^{\frac{1}{\gamma-1}}    <   \rho(x,t)   < \frac{7}{4} \rho_+.       \nonumber
\end{gather}

\end{itemize}

\end{lemma}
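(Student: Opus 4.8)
The plan is to prove all the bounds by a chain of elementary triangle-inequality estimates, organized along the hierarchy $(\rho_*,u_*,\theta_*)\rightsquigarrow(\tilde\rho,\tilde u,\tilde\theta)\rightsquigarrow(\bar\rho,\bar u,\bar\theta)\rightsquigarrow(\hat\rho,\hat u,\hat\theta)\rightsquigarrow(\rho,u,\theta)$, feeding the output of each stage into the next. The far-field constants $u_\pm,\theta_\pm,\rho_\pm,\gamma$ are fixed throughout, and $\delta$ together with the a~priori bound $\varepsilon_0$ will be taken small relative to them. First I would pin down $(\rho_*,u_*,\theta_*)$: since $(\rho_*,u_*,\theta_*)\in R_3(\rho_+,u_+,\theta_+)$ we have $\rho_*\le\rho_+$ and the adiabatic relation $\rho_*^{1-\gamma}\theta_*=\rho_+^{1-\gamma}\theta_+$, so $\rho_*=\rho_+(\theta_*/\theta_+)^{1/(\gamma-1)}$. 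Because $\delta=|(u_*-u_-,\theta_*-\theta_-)|$ is small, $|\theta_*-\theta_-|\le\delta$ gives $\theta_*>\theta_--\delta>\tfrac34\theta_-$ once $\delta<\tfrac14\theta_-$, hence $\rho_*>\rho_+(\tfrac34\tfrac{\theta_-}{\theta_+})^{1/(\gamma-1)}$; with $\rho_*\le\rho_+$ this is precisely the claim (iii) for $\rho_*$, and membership of $(\rho_*,u_*,\theta_*)$ in $\Omega^-_{\mathrm{sub}}\cup\Gamma^-_{\mathrm{tran}}$ records $u_*<0$.

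Next I invoke Lemma~\ref{wentaijie-lemma-1} for the boundary layer, applied with $(\rho_+,u_+,\theta_+)$ replaced by the composite far-field state $(\rho_*,u_*,\theta_*)$: whether the layer is non-degenerate \eqref{shuaijian-feituihua} or degenerate \eqref{shuaijian-tuihua}, the $k=0$ estimate yields $|(\tilde u(x)-u_*,\tilde\theta(x)-\theta_*)|\le C\delta$ uniformly in $x$ (the degenerate bound $C\delta/(1+\delta x)\le C\delta$ suffices). Combining with $|u_*-u_-|,|\theta_*-\theta_-|\le\delta$ gives $|\tilde u-u_-|,|\tilde\theta-\theta_-|\le(C+1)\delta$; since $u_-<0$, choosing $\delta$ so that $(C+1)\delta<\tfrac12|u_-|$ and $(C+1)\delta<\tfrac12\theta_-$ places $\tilde u$ in $(\tfrac32 u_-,\tfrac12 u_-)$ and $\tilde\theta$ in $(\tfrac12\theta_-,\tfrac32\theta_-)$. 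The layer density follows from mass conservation $\tilde\rho\tilde u=\rho_*u_*$, i.e.\ $\tilde\rho=\rho_*u_*/\tilde u$; as $\tilde u$ is bounded away from $0$ this gives $|\tilde\rho-\rho_*|\le C\delta$, and inserting the $\rho_*$ window from the first step yields the stated bounds on $\tilde\rho$. For the rarefaction approximation I use Lemma~\ref{xishubo-shuaijianlemma}(i), namely $\bar u_x\ge0$ and $0\le\bar\rho_x,\bar\theta_x$: each of $\bar u,\bar\theta,\bar\rho$ is monotone in $x$ and ranges between its near-field value $(\rho_*,u_*,\theta_*)$ and its far-field value $(\rho_+,u_+,\theta_+)$, so $u_*\le\bar u\le u_+$, $\theta_*\le\bar\theta\le\theta_+$, $\rho_*\le\bar\rho\le\rho_+$; the lower bound $\tfrac54 u_-<\bar u$ then follows from $u_*>u_--\delta$ with $\delta<\tfrac14|u_-|$, while the $\bar\theta,\bar\rho$ windows follow by inserting the $\theta_*,\rho_*$ estimates.

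For the composite profile $(\hat\rho,\hat u,\hat\theta)=(\tilde\rho,\tilde u,\tilde\theta)+(\bar\rho,\bar u,\bar\theta)-(\rho_*,u_*,\theta_*)$ I rewrite $\hat u=\bar u+(\tilde u-u_*)$ and likewise for $\hat\theta,\hat\rho$, so that the $O(\delta)$ deviation of the tilded quantities from the starred states combines with the monotone windows for the barred quantities, via the triangle inequality, to give $|\hat u|<\tfrac32\max\{|u_-|,|u_+|\}$ and the stated windows for $\hat\theta,\hat\rho$ after one further shrinkage of $\delta$. Finally, writing $(\rho,u,\theta)=(\hat\rho,\hat u,\hat\theta)+(\phi,\psi,\zeta)$ and invoking the uniform bound $\|(\phi,\psi,\zeta)\|_{L^\infty}\le\sqrt2\,\varepsilon_0$ from \eqref{raodongwuqiongmo} (itself a consequence of the Sobolev inequality \eqref{sobolevlwuqiong} and the a~priori assumption \eqref{xianyanjiashe}), I absorb the perturbation into slightly wider windows to obtain $|u|<2\max\{|u_-|,|u_+|\}$ together with the temperature and density bounds, provided $\varepsilon_0$ is small.

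The argument carries no genuine analytic obstacle; it is entirely triangle inequalities driven by the decay of Lemma~\ref{wentaijie-lemma-1}, the monotonicity of Lemma~\ref{xishubo-shuaijianlemma}, and the $L^\infty$ control \eqref{raodongwuqiongmo}. The only point requiring real care, and the step I would single out as the crux, is the bookkeeping of the finitely many smallness thresholds on $\delta$ and $\varepsilon_0$: one must verify that a single choice of each, depending solely on the fixed data $u_\pm,\theta_\pm,\rho_\pm,\gamma$, makes every inequality hold simultaneously, and confirm that the generic constant $C$ from Lemma~\ref{wentaijie-lemma-1} is independent of both $x$ and $\delta$ so that the nested estimates do not deteriorate.
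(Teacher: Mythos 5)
Your proposal is correct and takes essentially the same approach the paper intends: the paper gives no written proof of Lemma \ref{gezhongjie}, asserting only that it follows easily from \eqref{raodongwuqiongmo} and the smallness of $\varepsilon_0$, and your chain of triangle inequalities --- pinning down $(\rho_*,u_*,\theta_*)$ via the adiabatic relation, transporting the $O(\delta)$ bounds of Lemma \ref{wentaijie-lemma-1} (with $\tilde{\rho}=\rho_*u_*/\tilde{u}$), using the monotone windows of Lemma \ref{xishubo-shuaijianlemma} for $(\bar{\rho},\bar{u},\bar{\theta})$, and absorbing the perturbation via \eqref{sobolevlwuqiong} --- is exactly the elementary argument the authors leave implicit. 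Your attention to the uniform-in-$x$, $\delta$-independent constant in the decay estimates and to the simultaneous choice of the finitely many thresholds on $\delta$ and $\varepsilon_0$ is consistent with the paper's usage of the lemma.
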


Next, the following useful lemma plays an important role in the proof of the {\it a priori} estimates.
\begin{lemma}\label{lemma-daiquanL2}

Assume functions $ z(x, t) \in H_x^1(\mathbb{R}_+) $, then
\begin{itemize}
\item[$\mathrm{(i)}$] for the boundary layer $ \tilde{u}(x) $ satisfying Subcase 2 of the transonic case $M_+ = 1$, it holds
\begin{equation}\label{guji-bianjieceng^2quan}
  \int_{\mathbb{R_+}} \tilde{u}_x^2  z^2  \,\mathrm{d}x   \leq C \delta ^3  z^2(0,t)   + C \delta ^2 \| z_x \|^2.
\end{equation}
\item[$\mathrm{(ii)}$] for the smooth approximate rarefaction wave $ \bar{u}(x,t) $, it holds
\begin{equation}\label{guji-xishubo^2quan}
  \int_{\mathbb{R_+}} \bar{u}_x^2 z^2 \,\mathrm{d}x  \leq   C  \alpha^{\frac{1}{3}} \| z \|  \left(  \| z_x \|^2   +  (1+t)^{-\frac{4}{3}}  \right) .
\end{equation}
\end{itemize}

\end{lemma}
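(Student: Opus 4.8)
The plan is to handle the two parts independently; in each case one first reads off a pointwise (respectively $L^p$) bound on the weight from the decay estimates already proved, and then reduces the statement to a one-dimensional weighted inequality.

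For part $\mathrm{(i)}$, the input is the degenerate decay rate \eqref{shuaijian-tuihua} of Subcase 2: taking $k=1$ gives $|\tilde u_x(x)|\le C\delta^2(1+\delta x)^{-2}$, so that $\tilde u_x^2\le C\delta^4(1+\delta x)^{-4}$. It then suffices to prove the weighted bound
\[
\int_{\mathbb{R_+}}\frac{z^2}{(1+\delta x)^4}\,\mathrm{d}x\ \le\ \frac{C}{\delta}\,z^2(0,t)+\frac{C}{\delta^2}\,\|z_x\|^2,
\]
since multiplying by $C\delta^4$ reproduces \eqref{guji-bianjieceng^2quan}. I would establish this by integration by parts, using $(1+\delta x)^{-4}=-\tfrac{1}{3\delta}\tfrac{\mathrm d}{\mathrm dx}(1+\delta x)^{-3}$: the boundary contribution at $x=0$ is exactly $\tfrac{1}{3\delta}z^2(0,t)$ (the endpoint at $+\infty$ vanishes because $z\in H^1_x$), while the remaining term $\tfrac{2}{3\delta}\int_{\mathbb{R_+}}(1+\delta x)^{-3}zz_x\,\mathrm{d}x$ is treated by the weighted Young inequality $|zz_x|(1+\delta x)^{-3}\le\tfrac{\eta}{2}z^2(1+\delta x)^{-4}+\tfrac{1}{2\eta}z_x^2(1+\delta x)^{-2}$. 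Choosing $\eta$ proportional to $\delta$ (for instance $\eta=\tfrac{3\delta}{2}$) makes the coefficient $\tfrac{\eta}{3\delta}$ a fixed fraction below $1$, so the first piece is absorbed into the left-hand side; since $(1+\delta x)^{-2}\le1$, the second piece is at most $\tfrac{C}{\delta^2}\|z_x\|^2$.

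For part $\mathrm{(ii)}$ the argument is pure norm interpolation. I would begin with $\int_{\mathbb{R_+}}\bar u_x^2z^2\,\mathrm{d}x\le\|z\|_{L^\infty}^2\|\bar u_x\|^2$ and apply the Sobolev inequality \eqref{sobolevlwuqiong} to get $\|z\|_{L^\infty}^2\le2\|z\|\,\|z_x\|$. The factor $\|\bar u_x\|^2$ is controlled by interpolating the two bounds contained in \eqref{shuaijian-rare-yijiedao} at $p=2$, namely $\|\bar u_x\|^2\le C\min\{\delta_r^2\alpha,\ \delta_r(1+t)^{-1}\}$; taking the geometric mean with weights $\tfrac13$ and $\tfrac23$ gives $\|\bar u_x\|^2\le C\delta_r^{4/3}\alpha^{1/3}(1+t)^{-2/3}\le C\alpha^{1/3}(1+t)^{-2/3}$. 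Combining the three bounds yields $\int_{\mathbb{R_+}}\bar u_x^2z^2\,\mathrm{d}x\le C\alpha^{1/3}\|z\|\,\|z_x\|(1+t)^{-2/3}$, and one more Young inequality $\|z_x\|(1+t)^{-2/3}\le\tfrac12\|z_x\|^2+\tfrac12(1+t)^{-4/3}$ produces \eqref{guji-xishubo^2quan}.

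The calculations are otherwise routine; the delicate point in both parts is the bookkeeping of the exponents. In part $\mathrm{(i)}$ everything turns on the weight being $(1+\delta x)^{-4}$ rather than a fixed weight, so that the boundary and gradient contributions scale like $\delta^{-1}$ and $\delta^{-2}$ and exactly cancel the prefactor $\delta^4$; the Young parameter must be taken proportional to $\delta$ (not merely small) for the absorption to succeed. In part $\mathrm{(ii)}$ the interpolation weight $\tfrac13$ is forced: it is the unique choice matching the prescribed $\alpha^{1/3}$ while leaving a residual decay $(1+t)^{-2/3}$ that becomes the stated $(1+t)^{-4/3}$ after the final Young step. These are the steps I would verify most carefully.
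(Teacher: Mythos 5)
Your proof is correct, and part (ii) coincides with the paper's argument step for step: bound by $\|z\|_{L^\infty}^2\|\bar u_x\|^2$, apply the Sobolev inequality \eqref{sobolevlwuqiong}, interpolate the two bounds in \eqref{shuaijian-rare-yijiedao} at $p=2$ with weights $\tfrac13,\tfrac23$, and finish with Young. For part (i) you take a genuinely different route. The paper simply inserts the pointwise Poincar\'e-type inequality \eqref{budengshi-Nikkuni}, $|z(x,t)|\le |z(0,t)|+x^{1/2}\|z_x\|$, squares it, and integrates against $\tilde u_x^2$, using $\int_{\mathbb{R}_+}\tilde u_x^2\,\mathrm{d}x\le C\delta^3$ and $\int_{\mathbb{R}_+}x\,\tilde u_x^2\,\mathrm{d}x\le C\delta^2$ from \eqref{shuaijian-tuihua}; the two moments of the weight directly produce the exponents $\delta^3$ and $\delta^2$. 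You instead prove the weighted inequality $\int_{\mathbb{R}_+}(1+\delta x)^{-4}z^2\,\mathrm{d}x\le C\delta^{-1}z^2(0,t)+C\delta^{-2}\|z_x\|^2$ by integration by parts and an absorption with Young parameter $\eta\sim\delta$, then multiply by $C\delta^4$. Both arguments rest on the same decay input \eqref{shuaijian-tuihua} and yield the same constants' scaling; the paper's version is shorter and reusable wherever a weight with finite zeroth and first moments appears (it is applied again to $E$ and $b$ in \eqref{guji-I1}), whereas yours is self-contained (no appeal to the Nikkuni inequality) and makes explicit why the absorption forces $\eta$ to be proportional to $\delta$. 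One minor point to state when writing it up: the vanishing of the boundary term at $x=+\infty$ uses that $z\in H^1(\mathbb{R}_+)$ implies $z(x)\to 0$ as $x\to+\infty$, which is standard but worth a word.
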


\begin{proof}[Proof]
\eqref{guji-bianjieceng^2quan} follows easily by employing the following Poincar$\acute{\rm e}$ type inequality used by Nikkuni \cite{Nikkuni1999}:
\begin{equation}\label{budengshi-Nikkuni}
  \left| z(x,t) \right| \leq \left| z(0,t) \right| + x^{\frac{1}{2}} \| z_x \| ,  \quad  z(x, t) \in H_x^1(\mathbb{R}_+)
\end{equation}
and the decay rate of the degenerate boundary layer.
For \eqref{guji-xishubo^2quan},
by using the Sobolev inequality \eqref{sobolevlwuqiong} and \eqref{shuaijian-rare-yijiedao} in Lemma \ref{xishubo-shuaijianlemma}, we have
\begin{align}
\int_{\mathbb{R_+}} \bar{u}_x^2 z^2 \,\mathrm{d}x
& \leq  \| z \|^2_{L^{\infty}_x} \| \bar{u}_x \|^2    \leq  2 \| z \| \cdot \| z_x \|\cdot \| \bar{u}_x \|^2     \leq  C \| z \|\cdot \| z_x \| \cdot \alpha^{\frac{1}{3}} (1+t)^{-\frac{2}{3}}        \nonumber     \\[2mm]
&   \leq   C \alpha^{\frac{1}{3}}  \| z \|  \left(  \| z_x \|^2   +  (1+t)^{-\frac{4}{3}}  \right) .    \nonumber
\end{align}

\end{proof}

\subsection{Zero-order energy estimates}

\begin{lemma} \label{lemmadijieenergy}
Suppose that the conditions in Proposition \ref{prop-1} hold.
Then for all $ 0 < t < T $, we have the following zero-order energy estimates
\begin{align}\label{jibennengliang}
& \| (\phi, \psi, \zeta, \sqrt{\varepsilon}E, b ) \|^2           + \int_0^t      \left[ \phi^2(0, \tau)     + \sqrt{\varepsilon} E^2(0,\tau)      + \left\| \left(   \psi_x, \zeta_x,  E + \psi b + \hat{u} b   \right)  \right\|^2     \right]       \,\mathrm{d}\tau       \nonumber  \\[2mm]
&\quad  +  \int_0^t  \| \sqrt{\bar{u}_x}( \phi, \psi, \zeta, \sqrt{\varepsilon}E, b) \|^2  \,\mathrm{d}\tau       \leq  C \| (\phi_0, \psi_0, \zeta_0, E_0, b_0) \|^2              + C (\delta  + \alpha^{\frac{1}{10}})        \nonumber  \\[2mm]
&\qquad\qquad\qquad\qquad\qquad\qquad\qquad\qquad   + C (\delta  + \alpha^{\frac{1}{10}}) \int_0^t \| (\phi_x, \sqrt{\varepsilon} E_x, b_x  ) \|^2  \,\mathrm{d}\tau  .
\end{align}


\end{lemma}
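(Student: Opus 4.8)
The plan is to run a relative-entropy argument for the fluid block and couple it to the Maxwell energy, organizing everything around the combination $G := E + ub = E + \psi b + \hat u b$ so that the only electromagnetic dissipation that survives is $\int G^2$. First I would fix the fluid relative-entropy density $\eta(x,t)$, built from the convex function $\Phi(s) = s - \ln s - 1 \geq 0$, normalized so that $\rho\eta$ is equivalent to $|(\phi,\psi,\zeta)|^2$ under the \emph{a priori} bound \eqref{xianyanjiashe} and Lemma \ref{gezhongjie}. Multiplying $\eqref{raodong-fuhebo}_1$, $\eqref{raodong-fuhebo}_2$, $\eqref{raodong-fuhebo}_3$ by the canonical factors ($\psi$ for the momentum equation, $(\theta-\hat\theta)/\theta$ for the energy equation) and adding, I would reach a balance
\[
(\rho\eta)_t + (\cdots)_x + c_1\mu\psi_x^2 + c_2\kappa\zeta_x^2 = -G\psi b + \frac{\zeta}{\theta}G^2 + (\text{profile sources}),
\]
in which the expansive rarefaction interaction yields a favorable term $\sim \bar u_x|(\phi,\psi,\zeta)|^2$ (right sign since $\bar u_x\ge0$) and, because $u(0,t)=u_-<0$, the boundary flux at $x=0$ produces the good term $c\,\phi^2(0,t)$.

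Then I would record the Maxwell identity: multiplying $\eqref{raodong-fuhebo}_4$ by $E$ and $\eqref{raodong-fuhebo}_5$ by $b$ and adding gives $\tfrac12(\varepsilon E^2+b^2)_t - (Eb)_x + EG = 0$, where the boundary condition $(\sqrt\varepsilon E - b)(0,t)=0$ converts the flux into the \emph{good} boundary term $(Eb)(0,t) = \sqrt\varepsilon E^2(0,t)$. The decisive algebraic step is $EG = G^2 - Gub = G^2 - G\psi b - G\hat u b$: when this identity is added to the fluid balance, the two copies of $G\psi b$ cancel exactly, turning the bad Lorentz term $-G\psi b$ into the compound dissipation $\int G^2$ at the price of the residual $\int G\hat u b$. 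Because $b$ carries no damping of its own, this residual cannot be closed by Cauchy-Schwarz; to remove it I would append the cross term $\varepsilon E\hat u b$ to the energy density and compute $\frac{d}{dt}\int\varepsilon E\hat u b$ via $\varepsilon E_t = b_x - G$ and $b_t = E_x$, which reproduces exactly $-\int G\hat u b$ together with $-\tfrac12\int\hat u_x(\varepsilon E^2+b^2)$ and the boundary contribution $-\tfrac12 u_-(\varepsilon E^2+b^2)(0,t)$. The amended density $\rho\eta + \tfrac12(\varepsilon E^2 + b^2) + \varepsilon E\hat u b$ stays positive definite (hence equivalent to $\|(\phi,\psi,\zeta,\sqrt\varepsilon E,b)\|^2$) precisely when $\varepsilon\hat u^2 < 1$, which is one of the defining constraints on $\bar C$.

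After these two cancellations the only undifferentiated volume term is $\tfrac12\int\hat u_x(\varepsilon E^2+b^2)$ with $\hat u_x = \tilde u_x + \bar u_x$. The $\bar u_x$-part is nonnegative and delivers the good terms $\|\sqrt{\bar u_x}(\sqrt\varepsilon E, b)\|^2$; the $\tilde u_x$-part is the delicate one, since in the degenerate transonic case $\tilde u_x$ changes sign near $x=0$. Following the splitting $\int_0^\infty = \int_0^{M_0} + \int_{M_0}^\infty$ from Lemma \ref{wentaijie-lemma-1}, I would discard the piece on $[M_0,\infty)$ (right sign there) and, on $[0,M_0]$, apply the Poincar\'e inequality \eqref{budengshi-Nikkuni} to bound $\int_0^{M_0}|\tilde u_x|(\varepsilon E^2+b^2)$ by $C\delta\big((\varepsilon E^2+b^2)(0,t)+\|(\sqrt\varepsilon E_x, b_x)\|^2\big)$. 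Collecting all boundary contributions and using $b^2(0,t)=\varepsilon E^2(0,t)$, the net boundary coefficient of $\sqrt\varepsilon E^2(0,t)$ equals $(1-|u_-|\sqrt\varepsilon)$ up to $O(\delta)$, which is strictly positive under $\varepsilon<\bar C$; a fixed fraction of this good boundary term then absorbs the $O(\delta)$ boundary remainders.

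Finally I would close the inequality by absorbing the cubic term $|\frac{\zeta}{\theta}G^2|\le C\varepsilon_0 G^2$ into $\int G^2$ and by controlling the profile sources $\hat f, \hat g, \hat h$ — bounded by $C((\bar u - u_*)|\tilde u_x| + |\tilde u - u_*|\bar u_x)$ — along with the remaining wave-interaction integrals through Cauchy-Schwarz, Lemma \ref{lemma-daiquanL2}, and the decay rates in Lemmas \ref{wentaijie-lemma-1} and \ref{xishubo-shuaijianlemma}. These generate the factors $\delta$ and $\alpha^{1/10}$ together with the small-coefficient term $C(\delta+\alpha^{1/10})\int_0^t\|(\phi_x,\sqrt\varepsilon E_x, b_x)\|^2\,\mathrm{d}\tau$ on the right-hand side; integrating the resulting differential inequality in time yields \eqref{jibennengliang}. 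I expect the main obstacle to be the simultaneous execution of the two cancellations in the second paragraph: arranging the $G\psi b$ cancellation and the $\int G\hat u b$ cancellation at once while keeping both the amended energy density positive definite and the boundary coefficient $(1-|u_-|\sqrt\varepsilon)$ positive, which is exactly what forces the quantitative smallness $\varepsilon<\bar C$ of the dielectric constant.
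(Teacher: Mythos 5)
Your proposal follows essentially the same route as the paper's proof of Lemma \ref{lemmadijieenergy}: the relative entropy $\rho\eta$ built from $\Phi$ for the fluid block, the Maxwell energy identity, the cross term $\varepsilon E\hat u b$ appended to the energy density so that $-G\psi b$ is converted into the compound dissipation $\int G^2$ with $G=E+\psi b+\hat u b$, the boundary identity $(Eb)(0,t)=\sqrt\varepsilon E^2(0,t)$ producing the net coefficient $1-|u_-|\sqrt\varepsilon$, the positivity requirement $\varepsilon\hat u^2<1$ for the amended density, and the $[0,M_0]$ versus $[M_0,+\infty)$ splitting with Nikkuni's inequality \eqref{budengshi-Nikkuni} for the $\tilde u_x$-weighted term. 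All of these are exactly the steps the paper carries out.

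There is, however, one concrete omission. You claim that computing $\frac{\mathrm{d}}{\mathrm{d}t}\int\varepsilon E\hat u b$ via $\varepsilon E_t=b_x-G$ and $b_t=E_x$ reproduces \emph{exactly} $-\int G\hat u b$, $-\tfrac12\int\hat u_x(\varepsilon E^2+b^2)$ and the boundary flux. It does not: since $\hat u$ depends on $t$ through $\bar u$, the product rule also generates $\int\varepsilon E\hat u_t b=\int\varepsilon Eb\,\bar u_t$, which is precisely the extra right-hand side of \eqref{coueub-2}. This term cannot be dropped. Because $|\bar u_t|\le\frac54\beta_3\bar u_x$ and the rarefaction strength is \emph{not} assumed small, $\int_0^t\!\int\varepsilon Eb\,\bar u_t$ is of the same order as the good term $\tfrac12\int_0^t\!\int\bar u_x(\varepsilon E^2+b^2)$ and must be absorbed into it; the paper does this in \eqref{guji-diancichang-xishubo}--\eqref{yuxiang-diancichang-2} by writing $E=(E+ub)-ub$ and invoking the constraint $\beta_1\beta_3\varepsilon\le\frac1{64}$ of \eqref{jiedianxishu-no.1}, which turns out to be the binding condition that determines $\bar C$ in \eqref{jiedianxishu-jieguo}. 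Your argument is repairable by the same absorption (a direct Cauchy--Schwarz also works under a slightly different smallness condition on $\varepsilon$), but as written the summed energy identity is incomplete and the very constraint that fixes $\bar C$ is absent from your accounting.
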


\begin{proof}[Proof]
The proof of the zero-order energy estimates in Lemma \ref{lemmadijieenergy} includes the following two steps.

{\bf Step 1:}
Inspired by the work of the compressible Navier-Stokes equations in \cite{Kawashima1986-feidengshang} and \cite{qinxiaohong-2011}, we take $ \Phi(s) = s - 1 -\ln s $\, for $s>0$, and set
$$
\eta(x,t) =  \frac{1}{2} \psi^2  + R \hat{\theta} \Phi \left( \frac{\hat{\rho}}{\rho} \right)  + \frac{R}{\gamma -1 } \hat{\theta} \Phi \left( \frac{\theta}{\hat{\theta}} \right).
$$
Tedious calculations give rise to
\begin{equation}\label{shangliudui-0}
( \rho\cdot\eta(x,t) )_t      +  H_x     +  \mu \frac{\hat{\theta}}{\theta} \psi_x^2    + \kappa \frac{\hat{\theta}}{\theta^2} \zeta_x^2     =        Q  + (E + \psi b + \hat{u}b)^2 \frac{\zeta}{\theta} - (E + \psi b + \hat{u}b)\psi b ,
\end{equation}
where
\begin{align}
H & =  \rho u \cdot \eta(x,t) +  ( p - \hat{p} ) \psi  - \mu \psi \psi_x  - \kappa \frac{\zeta \zeta_x}{\theta},        \nonumber  \\[2mm]
Q & =   -\frac{R \hat{\theta} \hat{\rho}_{x} \phi \psi}{\hat{\rho}}       - \frac{R \hat{\theta}  \hat{f} \phi}{\hat{\rho}}        +  g \psi          + \left( 2 \mu \hat{u}_{x} \psi_{x}     +     2 \mu \tilde{u}_{x} \bar{u}_{x}    + \mu \bar{u}_{x}^{2}       + \kappa \bar{\theta}_{x x}\right) \frac{\zeta}{\theta}       \nonumber  \\[2mm]
&\quad     -\left(\frac{p \hat{\theta}_{x} \psi}{(\gamma-1) \hat{\theta}} + \frac{\kappa \tilde{\theta}_{x x}(p-\hat{p})+\mu \tilde{u}_{x}^{2}(p-\hat{p})   + p \hat{h}}{\hat{p}}\right) \frac{\zeta}{\theta}           +\kappa  \frac{\zeta \zeta_{x} \hat{\theta}_{x}}{\theta^2}      \nonumber    \\[2mm]
&\quad   + \rho\left(\frac{R}{\gamma-1} \hat{\theta}_{x} \psi+\frac{\kappa \tilde{\theta}_{x x}+\mu \tilde{u}_{x}^{2}-\hat{p} \hat{u}_{x}   +  \hat{h} }{\hat{\rho}}\right)\left[(\gamma-1) \Phi\left(\frac{\hat{\rho}}{\rho}\right)+\Phi\left(\frac{\theta}{\hat{\theta}}\right)\right]  .   \nonumber 
\end{align}



Next, we use the specific structure of the Maxwell equations to treat the terms about electromagnetic fields on the right-hand side of \eqref{shangliudui-0}.
Multiplying $\eqref{raodong-fuhebo}_{4}$ by $E$ and
$\eqref{raodong-fuhebo}_{5}$ by $b$ respectively, then summing them up gives
\begin{equation}\label{coueub-1}
  \frac{1}{2} (\varepsilon E^2 + b^2)_t  - (Eb)_x  + (E + \psi b + \hat{u}b)E = 0.
\end{equation}
Multiplying $\eqref{raodong-fuhebo}_{4}$ by $ \hat{u} b$ and applying $ \eqref{raodong-fuhebo}_{5} $ deduces
\begin{align}\label{coueub-2}
  ( \varepsilon E \hat{u} b)_t   - \frac{1}{2} \left( \hat{u} (\varepsilon E^2 + b^2) \right)_x        + \frac{1}{2} \bar{u}_x (\varepsilon E^2 + b^2 )    +  (E + \psi b + \hat{u}b) \hat{u} b       \nonumber  \\[2mm]
   =    - \frac{1}{2}   \tilde{u}_x \left( \varepsilon E^2 + b^2  \right)       +  \varepsilon E b \hat{u}_t .
\end{align}
Then summing \eqref{coueub-1}, \eqref{coueub-2} and \eqref{shangliudui-0} up gives
\begin{align}\label{wode-shangliudui}
& \left( \rho\cdot \eta(x,t)    +  \frac{1}{2} \varepsilon E^2    + \frac{1}{2} b^2    + \varepsilon E \hat{u} b \right) _t           + \left(  H  -   \left(  \frac{1}{2} \hat{u}\varepsilon E^2 +  \frac{1}{2} \hat{u} b^2 + Eb   \right) \right)_x          +  \mu \frac{\hat{\theta}}{\theta} \psi_x^2        \nonumber  \\[2mm]
&  + \kappa \frac{\hat{\theta}}{\theta^2} \zeta_x^2       + \frac{1}{2} \bar{u}_x (\varepsilon E^2 + b^2 )      +  \left( E + \psi b + \hat{u} b  \right)^2     =  Q     - \frac{1}{2} \tilde{u}_x (\varepsilon E^2 + b^2 )      + \varepsilon E b \bar{u}_t        \nonumber  \\[2mm]
& \qquad\qquad\qquad\qquad\qquad\qquad\qquad\qquad\qquad\qquad\quad  +  \left( E + \psi b + \hat{u} b  \right)^2 \frac{\zeta}{\theta} .
\end{align}
Set $ \beta_1 := \max\{\left| u_- \right|, \left| u_+ \right|\} $.
The boundary condition $\sqrt{\varepsilon}E(0,t)  =  b(0,t)  $ implies the following boundary estimate
\begin{align}\label{guji-boundary-eb1}
  &\quad \int_0^t\int_{\mathbb{R_+}} -   \left(  \frac{1}{2} \hat{u}\varepsilon E^2 +  \frac{1}{2} \hat{u} b^2 + Eb   \right)_x  \,\mathrm{d}x \mathrm{d}\tau       \nonumber \\[2mm]
  &=  \int_0^t \left( \frac{1}{2} u_- \varepsilon E^2(0,\tau)    +  \frac{1}{2} u_- b^2(0,\tau)    + E(0,\tau)b(0,\tau) \right) \,\mathrm{d}\tau         \nonumber \\[2mm]
  &=  \int_0^t \left(  u_- \varepsilon E^2(0,\tau)       + \sqrt{\varepsilon} E^2(0,\tau)  \right) \,\mathrm{d}\tau        = \int_0^t      \left( 1 - \left| u_- \right| \sqrt{\varepsilon}  \right) \sqrt{\varepsilon} E^2(0,\tau)   \,\mathrm{d}\tau   \nonumber \\[2mm]
  & \geq  \int_0^t      \left( 1 -  \beta_1 \sqrt{\varepsilon}  \right) \sqrt{\varepsilon} E^2(0,\tau)   \,\mathrm{d}\tau    \geq   \frac{3}{4} \int_0^t  \sqrt{\varepsilon}E^2(0,\tau)  \,\mathrm{d}\tau ,
\end{align}
where in the last inequality we have taken
\begin{equation}\label{jiedianxishu-bdyguji-1}
 1 -  \beta_1 \sqrt{\varepsilon} \geq  \frac{3}{4},  \quad   \mathrm{i.e.}   \quad  \beta_1^2 \varepsilon \leq \frac{1}{16} .
\end{equation}
Moreover, using the boundary condition \eqref{raodongbdycon},
thanks to the good sign of $ u_- $, we can get
\begin{align}\label{guji-boundary-ns1}
 \int_0^t \int_{\mathbb{R_+}}  H_x   \,\mathrm{d}x \mathrm{d}\tau    =  -   \int_0^t  H(0,\tau)  \,\mathrm{d}\tau
  & =  -\, u_-  R \rho_- \theta_-  \int_0^t \Phi\left(\frac{\hat{\rho}}{\rho}\right)(0,\tau) \,\mathrm{d}\tau      \nonumber  \\[2mm]
  & \geq    c \int_0^t \phi^2(0,\tau) \,\mathrm{d}\tau .
\end{align}
Integrating \eqref{wode-shangliudui} with respect to $ x $ and $ t $, then plugging \eqref{guji-boundary-eb1} and \eqref{guji-boundary-ns1} into the resulting equality, and employing \eqref{raodongwuqiongmo}, we obtain
\begin{align}\label{guji-jibennl-1}
&\quad   \int_{\mathbb{R_+}} \left( \rho\cdot \eta(x,t)    +  \frac{1}{2} \varepsilon E^2    + \frac{1}{2} b^2    \right) \,\mathrm{d}x         +     \frac{1}{2} \int_0^t \| \sqrt{\bar{u}_x}(\sqrt{\varepsilon}E, b) \|^2  \,\mathrm{d}\tau                          \nonumber  \\[2mm]
&\quad    + \frac{3}{4} \int_0^t  \sqrt{\varepsilon}E^2(0,\tau)  \,\mathrm{d}\tau       +  c_1 \int_0^t      \left[ \phi^2(0, \tau)       + \left\| \left(   \psi_x, \zeta_x,  E + \psi b + \hat{u} b   \right)  \right\|^2  \right]       \,\mathrm{d}\tau           \nonumber  \\[2mm]
& \leq  C \| (\phi_0, \psi_0, \zeta_0, E_0, b_0) \|^2       +  \int_0^t\int_{\mathbb{R_+}} Q \,\mathrm{d}x \mathrm{d}\tau        +  \int_0^t\int_{\mathbb{R_+}}  \varepsilon E b \bar{u}_t   \,\mathrm{d}x \mathrm{d}\tau             \nonumber  \\[2mm]
&\quad      - \int_0^t\int_{\mathbb{R_+}}    \frac{1}{2} \tilde{u}_x (\varepsilon E^2 + b^2 )  \,\mathrm{d}x \mathrm{d}\tau         + \int_{\mathbb{R_+}}  \left| \varepsilon E \hat{u} b \right| \,\mathrm{d}x   .
\end{align}


{\bf Step 2:} First, we will estimate the terms $ -\int_0^t\int_{\mathbb{R_+}}  \frac{1}{2} \tilde{u}_x (\varepsilon E^2 + b^2 ) \,\mathrm{d}x \mathrm{d}\tau  $ and $  \int_0^t\int_{\mathbb{R_+}} \varepsilon E b \bar{u}_t  \,\mathrm{d}x \mathrm{d}\tau   $ on the right-hand side of \eqref{guji-jibennl-1}.
For $ - \int_0^t\int_{\mathbb{R_+}}  \frac{1}{2} \tilde{u}_x (\varepsilon E^2 + b^2 ) \,\mathrm{d}x \mathrm{d}\tau  $,
motivated by the work of \cite{qinxiaohong-2011},
we note that in Subcase 2 of Case (ii) $ (\tilde{u},\tilde{\theta})(x) $ is parallel to the vector $ (-\mu u_*, (\gamma-1)\kappa ) $ at $ (u_*, \theta_*) $.
Hence, for each $ (\tilde{u}, \tilde{\theta})(x) $ there exists a constant $ M_0 \geq 1 $ just depending on $ u_-, \theta_-, \rho_*, u_*, \theta_* $ such that if $ x> M_0 $, then $ (\tilde{u}, \tilde{\theta})(x) \nearrow (u_*, \theta_*) $ as $ x \rightarrow +\infty $.
This implies $ \tilde{u}_x \geq 0 $ and $ \tilde{\theta}_x \geq 0 $ on $ [M_0,+\infty) $.
Thanks to this important observation, we divide the integral into two parts:
\begin{align}\label{fenqujianbanfa-1}
- \int_0^t\int_{\mathbb{R_+}}    \frac{1}{2} \tilde{u}_x (\varepsilon E^2 + b^2 )    \,\mathrm{d}x \mathrm{d}\tau  =    -  \left\{      \int_0^t \int_0^{M_0}     +   \int_0^t\int_{M_0}^{+\infty}      \right\}     \frac{1}{2} \tilde{u}_x (\varepsilon E^2 + b^2 )      \,\mathrm{d}x\mathrm{d}\tau        =: I_1 + I_2.
\end{align}
For $ I_1 $, employing the Poincar$\acute{\rm e}$ type inequality \eqref{budengshi-Nikkuni} on $ E $ and $ b $,
then using the boundary condition $\sqrt{\varepsilon}E(0,t) = b(0,t)$,
we can derive
\begin{align}\label{guji-I1}
  I_1
  & \leq C   \int_0^t\int_0^{M_0}   \frac{\delta ^2}{(1+\delta x  )^2 } (\varepsilon E^2 + b^2 )     \,\mathrm{d}x \mathrm{d}\tau            \nonumber     \\[2mm]
  & \leq  C \int_0^t \left( \varepsilon E^2(0,\tau) + b^2(0,\tau) \right)   \left( \int_0^{M_0} \frac{\delta ^2 }{(1+\delta x  )^2 }  \,\mathrm{d}x  \right)  \,\mathrm{d}\tau      \nonumber     \\[2mm]
  & \quad    +  C   \int_0^t   \left( \varepsilon \| E_x \|^2  +  \| b_x \|^2 \right)   \left( \int_0^{M_0} \frac{\delta ^2 x}{(1+\delta x  )^2 }  \,\mathrm{d}x  \right)  \mathrm{d}\tau          \nonumber     \\[2mm]
  &  \leq    C \delta  \int_0^t \varepsilon E^2(0,\tau) \,\mathrm{d}\tau      +    C  \delta    \int_0^t   \left( \varepsilon \| E_x \|^2  +  \| b_x \|^2 \right)   \mathrm{d}\tau ,
\end{align}
where in the last inequality we have used the following simple inequality:
\begin{equation*}
   0 \leq  \ln (1+x) \leq x, \quad x \in \mathbb{R}_+.
 \end{equation*}
Moreover, $ \tilde{u}_x \geq 0 \; (x > M_0) $ implies
\begin{equation}\label{guji-I2}
  I_2 =  - \int_0^t \int_{M_0}^{+\infty} \frac{1}{2} \tilde{u}_x (\varepsilon E^2 + b^2 )  \,\mathrm{d}x \mathrm{d}\tau  \leq  0.
\end{equation}
Combining \eqref{guji-I1} and \eqref{guji-I2}, we can get
\begin{align}\label{yuxiang-diancichang-1}
 - \int_0^t \int_{\mathbb{R_+}} \frac{1}{2} \tilde{u}_x (\varepsilon E^2 + b^2 )  \,\mathrm{d}x\mathrm{d}\tau    \leq  C \delta  \int_0^t \varepsilon E^2(0,\tau) \,\mathrm{d}\tau           + C \delta  \int_0^t \| (\sqrt{\varepsilon} E_x, b_x) \|^2 \,\mathrm{d}\tau  .
\end{align}


For $  \int_0^t\int_{\mathbb{R_+}} \varepsilon E b \bar{u}_t  \,\mathrm{d}x \mathrm{d}\tau   $, we have
\begin{align}\label{guji-diancichang-xishubo}
 & \int_0^t \int_{\mathbb{R_+}}  \varepsilon E b \bar{u}_t   \,\mathrm{d}x \mathrm{d}\tau     =    \int_0^t \int_{\mathbb{R_+}} \varepsilon (E + ub - ub)b \bar{u}_t \,\mathrm{d}x \mathrm{d}\tau         \nonumber  \\[2mm]
 &=  - \int_0^t \int_{\mathbb{R_+}} \varepsilon u b^2 \bar{u}_t \,\mathrm{d}x \mathrm{d}\tau          +  \int_0^t \int_{\mathbb{R_+}} \varepsilon (E + u b) b \bar{u}_t \,\mathrm{d}x \mathrm{d}\tau      =: J_1  +  J_2 .
\end{align}
Set $ \beta_2 := \max\{\theta_-, \theta_+ \} $, $ \beta_3 :=  \beta_1 + \sqrt{R \gamma \beta_2} $ and recall $ \beta_1 = \max\{\left| u_- \right|, \left| u_+ \right|\} $.
Due to (i) of Lemma \ref{gezhongjie}, elementary calculations give rise to
\begin{align}\label{guji-baru_t}
\left|  \bar{u}_t \right|
& =  \left| - \bar{u} \bar{u}_x  -  \frac{\bar{p}_x}{\bar{\rho}} \right|    =  \left| - \left( \bar{u} + \sqrt{ R \gamma \bar{\theta}}  \right) \bar{u}_x \right|     \leq \left( \frac{5}{4} \beta_1 + \sqrt{R \gamma \beta_2}  \right) \bar{u}_x          \nonumber \\[1mm]
&\qquad\qquad\qquad\qquad\qquad\qquad    \leq   \frac{5}{4} \left( \beta_1 + \sqrt{R \gamma \beta_2}  \right)  \bar{u}_x    =  \frac{5}{4} \beta_3 \bar{u}_x .
\end{align}
Then using (i) of Lemma \ref{gezhongjie} and \eqref{guji-baru_t} gives that
\begin{equation}\label{guji-J1}
  \left| J_1 \right|    \leq     \frac{5}{2} \beta_1 \beta_3 \varepsilon  \int_0^t \int_{\mathbb{R_+}} \bar{u}_x b^2  \,\mathrm{d}x \mathrm{d}\tau       \leq  \frac{5}{128} \int_0^t \int_{\mathbb{R_+}} \bar{u}_x b^2  \,\mathrm{d}x \mathrm{d}\tau ,
\end{equation}
where in the last inequality we have chosen
\begin{equation}\label{jiedianxishu-no.1}
   \frac{5}{2} \beta_1 \beta_3 \varepsilon \leq \frac{5}{128}, \quad \mathrm{i.e.} \quad  \beta_1 \beta_3 \varepsilon \leq \frac{1}{64}.
\end{equation}

By using the Cauchy inequality and the decay rate of smooth approximate rarefaction wave, we can deduce
\begin{align}
  \left| J_2 \right|
  & \leq  \int_0^t\int_{\mathbb{R_+}} \left| E + ub \right| \cdot \left| \frac{5}{4} \beta_3 \varepsilon \bar{u}_x b \right|  \,\mathrm{d}x \mathrm{d}\tau     \nonumber  \\[1mm]
  & \leq  \frac{1}{2} c_1 \int_0^t\int_{\mathbb{R_+}} \left( E + ub \right)^2  \,\mathrm{d}x \mathrm{d}\tau     + \frac{1}{2 c_1} \int_0^t\int_{\mathbb{R_+}} \frac{25}{16} \beta_3^2 \varepsilon^2 \bar{u}_x^2 b^2 \,\mathrm{d}x \mathrm{d}\tau            \nonumber  \\[1mm]
  & \leq  \frac{1}{2} c_1 \int_0^t\int_{\mathbb{R_+}} \left( E + ub \right)^2  \,\mathrm{d}x \mathrm{d}\tau      + \frac{25}{32 c_1} \beta_3^2 \varepsilon^2   \int_0^t\int_{\mathbb{R_+}} \| \bar{u}_x \|_{L^\infty_x}  \bar{u}_x b^2 \,\mathrm{d}x \mathrm{d}\tau            \nonumber  \\[1mm]
  & \leq  \frac{1}{2} c_1 \int_0^t\int_{\mathbb{R_+}} \left( E + ub \right)^2  \,\mathrm{d}x \mathrm{d}\tau      + \frac{25}{32 c_1} \beta_3^2 \varepsilon^2 \alpha  \int_0^t\int_{\mathbb{R_+}}  \bar{u}_x b^2 \,\mathrm{d}x \mathrm{d}\tau ,   \nonumber
\end{align}
where $ c_1 $ is a constant the same as that in \eqref{guji-jibennl-1}.
Then choosing $ \alpha $ suitably small leads to
\begin{equation}\label{guji-J2}
  \left| J_2 \right|  \leq    \frac{1}{2} c_1 \int_0^t\int_{\mathbb{R_+}} \left( E + ub \right)^2  \,\mathrm{d}x \mathrm{d}\tau       + \frac{3}{128} \int_0^t\int_{\mathbb{R_+}}  \bar{u}_x b^2 \,\mathrm{d}x \mathrm{d}\tau .
\end{equation}
Putting \eqref{guji-J1} and \eqref{guji-J2} into \eqref{guji-diancichang-xishubo}, we have
\begin{align}\label{yuxiang-diancichang-2}
\left|    \int_0^t \int_{\mathbb{R_+}}  \varepsilon E b \bar{u}_t  \,\mathrm{d}x\mathrm{d}\tau    \right|       \leq   \frac{1}{16}   \int_0^t \int_{\mathbb{R_+}} \bar{u}_x b^2  \,\mathrm{d}x \mathrm{d}\tau       + \frac{1}{2} c_1 \int_0^t\int_{\mathbb{R_+}} \left( E + ub \right)^2  \,\mathrm{d}x \mathrm{d}\tau   .
\end{align}

Next, recalling $  \left| \hat{u}(x,t) \right| < \frac{3}{2} \beta_1  $ and using the Cauchy inequality, the space integration term $ \int_{\mathbb{R_+}} \left| \varepsilon E \hat{u} b \right|  \,\mathrm{d}x  $ in \eqref{guji-jibennl-1} can be estimated as follows:
\begin{align}\label{guji-danjifen-E,b}
  \int_{\mathbb{R_+}} \left| \varepsilon E \hat{u} b \right|  \,\mathrm{d}x    \leq  \int_{\mathbb{R_+}} \varepsilon \left| E \cdot \frac{3}{2} \beta_1  b  \right|  \,\mathrm{d}x      \leq  \frac{1}{4} \int_{\mathbb{R_+}} \varepsilon E^2  \,\mathrm{d}x      + \frac{9}{4} \beta_1^2 \varepsilon \int_{\mathbb{R_+}} b^2 \,\mathrm{d}x     \leq    \frac{1}{4} \int_{\mathbb{R_+}} (\varepsilon E^2  +  b^2)  \,\mathrm{d}x  ,
\end{align}
where in the last inequality we have chosen
\begin{equation}\label{jiedianxishu-no.2}
   \beta_1^2 \varepsilon \leq \frac{1}{9}.
\end{equation}
Combining \eqref{yuxiang-diancichang-1}, \eqref{yuxiang-diancichang-2}, \eqref{guji-danjifen-E,b} and \eqref{guji-jibennl-1}, then choosing $\delta $ suitably small, we can obtain
\begin{align}\label{guji-jibennl-2}
&\quad   \int_{\mathbb{R_+}} \left( \rho\cdot \eta(x,t)    +  \frac{1}{4} \varepsilon E^2    + \frac{1}{4} b^2  \right) \,\mathrm{d}x         +     \frac{7}{16} \int_0^t \| \sqrt{\bar{u}_x}(\sqrt{\varepsilon}E, b) \|^2  \,\mathrm{d}\tau                          \nonumber  \\[2mm]
&\quad    + \frac{1}{2} \int_0^t  \sqrt{\varepsilon}E^2(0,\tau)  \,\mathrm{d}\tau       +  c_2 \int_0^t      \left[ \phi^2(0, \tau)       + \left\| \left(   \psi_x, \zeta_x,  E + \psi b + \hat{u} b   \right)  \right\|^2  \right]       \,\mathrm{d}\tau           \nonumber  \\[2mm]
& \leq  C \| (\phi_0, \psi_0, \zeta_0, E_0, b_0) \|^2        + C \delta  \int_0^t \| (\sqrt{\varepsilon} E_x, b_x) \|^2 \,\mathrm{d}\tau        + \int_0^t\int_{\mathbb{R_+}} Q \,\mathrm{d}x \mathrm{d}\tau .
\end{align}



Finally, for the terms $ \int_0^t\int_{\mathbb{R_+}} Q \,\mathrm{d}x \mathrm{d}\tau $, by performing some similar computations as in \cite{qinxiaohong-2011} and choosing $ \delta  $ and $ \alpha $ suitably small, we obtain
\begin{align}\label{guji-Q-jieguo}
& \quad   \int_0^t\int_{\mathbb{R_+}} Q \,\mathrm{d}x \mathrm{d}\tau      + c_3 \int_0^t\int_{\mathbb{R_+}}  \bar{u}_x \left| \left( \phi, \psi, \zeta \right) \right|^2   \,\mathrm{d}x \mathrm{d}\tau     \nonumber  \\[2mm]
& \leq   \frac{1}{4} c_2 \int_0^t  \| (\psi_x,\zeta_x) \|^2  \,\mathrm{d}\tau
  +  C \delta  \int_0^t \phi^2(0,\tau) \,\mathrm{d}\tau             + C (\delta  + \alpha^{\frac{1}{10}}) \left( 1 + \int_0^t  \| (\phi_x,\psi_x,\zeta_x) \|^2   \,\mathrm{d}\tau  \right) ,
\end{align}
where $ c_2 $ is a constant the same as that in \eqref{guji-jibennl-2}.
Putting \eqref{guji-Q-jieguo} into \eqref{guji-jibennl-2}, then taking $ \delta  $ and $ \alpha $ small enough, we can arrive at \eqref{jibennengliang}.

This completes the proof of Lemma \ref{lemmadijieenergy}.


\end{proof}


\subsection{High-order energy estimates}

\noindent In the following lemma, we will control the energy $\|(\sqrt{\varepsilon}E_x,b_x)\|^2$.


\begin{lemma}\label{gujiExbxlemma}
Suppose that the conditions in Proposition \ref{prop-1} hold.
Then for all $ 0 < t < T $, we have the following energy estimate
\begin{align}\label{gujiExbxjieguo}
  &\quad \| ( \sqrt{\varepsilon} E_x, b_x) \|^2        + \sqrt{\varepsilon} E^2(0,t)        +  \int_0^t  \| (E_x, b_x) \|^2  \,\mathrm{d}\tau      + \int_0^t \varepsilon^{\frac{3}{2}} E_\tau^2(0,\tau) \,\mathrm{d}\tau         \nonumber \\[1mm]
  & \leq   C \left( \| (\phi_0, \psi_0, \zeta_0) \|^2     + \| (E_0, b_0) \|^2_{H^1}  \right)       + C (\delta  + \alpha^{\frac{1}{10}}) \left( 1 +  \int_0^t \| \phi_x \|^2  \,\mathrm{d}\tau  \right) .
\end{align}
\end{lemma}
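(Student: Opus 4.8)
The plan is to derive an evolution identity for $\|(\sqrt{\varepsilon}E_x,b_x)\|^2$ directly from the Maxwell part of \eqref{raodong-fuhebo}, exploiting the algebraic coupling $b_t=E_x$ to sidestep the lost regularity. First I differentiate $\eqref{raodong-fuhebo}_4$ in $x$ and multiply the result by $E_x$; the term $-b_{xx}E_x$ is rewritten, after one integration by parts together with the substitution $E_{xx}=b_{xt}$ read off from $\eqref{raodong-fuhebo}_5$, as $\tfrac12\frac{\mathrm{d}}{\mathrm{d}t}\|b_x\|^2+E_x(0,\tau)b_x(0,\tau)$. This single multiplier produces the energy $\tfrac12\frac{\mathrm{d}}{\mathrm{d}t}(\varepsilon\|E_x\|^2+\|b_x\|^2)$, the dissipation $\|E_x\|^2$, the boundary contribution $\int_0^t E_x(0,\tau)b_x(0,\tau)\,\mathrm{d}\tau$ coming from $-(E_xb_x)_x$, and the interior remainder $\int_0^t\int_{\mathbb{R}_+}(\psi b+\hat{u}b)_x E_x$. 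To recover the missing $\int_0^t\|b_x\|^2\,\mathrm{d}\tau$, which is unavailable as a damping term because of the regularity-loss structure, I use the pointwise relation $b_x=\varepsilon E_t+(E+\psi b+\hat{u}b)$ obtained from $\eqref{raodong-fuhebo}_4$, multiply it by $b_x$, and integrate by parts in $\tau$ in the cross term $\varepsilon\int_0^t\int_{\mathbb{R}_+} E_\tau b_x$, again invoking $b_{x\tau}=E_{xx}$ and $EE_{xx}=(EE_x)_x-E_x^2$. This is exactly where the second boundary term $2\varepsilon(EE_x)_x$ arises and where an additional $\varepsilon\|E_x\|^2$ is generated, already controlled by the first estimate; the zeroth-order bound on $\int_0^t\|E+\psi b+\hat{u}b\|^2$ from Lemma \ref{lemmadijieenergy} absorbs the remaining quadratic-in-$N$ contribution.

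The heart of the argument, and the step I expect to be the main obstacle, is the treatment of the boundary traces $E_x(0,\tau)$ and $b_x(0,\tau)$: since there is no good term $\int_0^t\int_{\mathbb{R}_+}b^2$, these traces cannot be bounded by the Sobolev/trace inequality \eqref{sobolevlwuqiong} and must be eliminated algebraically. Differentiating the composite boundary condition $(\sqrt{\varepsilon}E-b)(0,\tau)=0$ in time and using $\eqref{raodong-fuhebo}_5$ at $x=0$ gives $E_x(0,\tau)=b_\tau(0,\tau)=\sqrt{\varepsilon}E_\tau(0,\tau)$; evaluating $\eqref{raodong-fuhebo}_4$ at $x=0$, where $\psi(0,\tau)=0$ and $\hat{u}(0,\tau)=u_-$ by \eqref{raodongbdycon} and $b(0,\tau)=\sqrt{\varepsilon}E(0,\tau)$, gives $b_x(0,\tau)=\varepsilon E_\tau(0,\tau)+(1+u_-\sqrt{\varepsilon})E(0,\tau)$. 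Substituting, the boundary integrand $E_x(0,\tau)b_x(0,\tau)$ becomes $\varepsilon^{3/2}E_\tau^2(0,\tau)+\tfrac12\sqrt{\varepsilon}(1+u_-\sqrt{\varepsilon})\bigl(E^2(0,\tau)\bigr)_\tau$, whose first piece is precisely the good dissipative trace $\int_0^t\varepsilon^{3/2}E_\tau^2(0,\tau)\,\mathrm{d}\tau$ on the left-hand side and whose second piece integrates to $+\tfrac12\sqrt{\varepsilon}(1+u_-\sqrt{\varepsilon})E^2(0,t)$; similarly $-2\varepsilon E(0,\tau)E_x(0,\tau)=-\varepsilon^{3/2}\bigl(E^2(0,\tau)\bigr)_\tau$ contributes, after moving to the left, $+\varepsilon^{3/2}E^2(0,t)$. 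The correct positive signs of these recovered boundary energies require $1+u_-\sqrt{\varepsilon}>0$, which is exactly the role of the dielectric-constant smallness $\varepsilon<\bar{C}$ already imposed in Proposition \ref{prop-1}.

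It remains to control the interior nonlinear terms $\int_0^t\int_{\mathbb{R}_+}(\psi b+\hat{u}b)_xE_x$ and the cross terms produced by the $b_x$-estimate. Expanding $(\psi b+\hat{u}b)_x=\psi_xb+\psi b_x+\hat{u}_xb+\hat{u}b_x$, the products $\psi b_xE_x$ and $\psi_xbE_x$ are handled by the Cauchy inequality together with the a priori smallness \eqref{raodongwuqiongmo} and \eqref{sobolevlwuqiong}, absorbing fractions of $\|E_x\|^2$ and $\|b_x\|^2$ and leaving quantities already estimated in Lemma \ref{lemmadijieenergy}. The terms $\hat{u}b_xE_x$ carry the bounded coefficient $\hat{u}$ controlled by Lemma \ref{gezhongjie}, so the smallness $\varepsilon<\bar{C}$ makes their effective coefficient small enough to be absorbed into the $\|(E_x,b_x)\|^2$ dissipation. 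The weighted terms $\hat{u}_xbE_x$, with $\hat{u}_x=\tilde{u}_x+\bar{u}_x$, are split according to the two pieces of the profile and estimated by Lemma \ref{lemma-daiquanL2}, trading the weight $\tilde{u}_x$ or $\bar{u}_x$ for the small factors $\delta$ or $\alpha^{1/3}$ at the cost of a $\delta^3b^2(0,\tau)=\delta^3\varepsilon E^2(0,\tau)$ boundary residue, which is in turn absorbed by the good boundary term $\sqrt{\varepsilon}E^2(0,t)$ recovered above. Collecting all contributions, choosing $\delta$, $\alpha$ and $\varepsilon$ suitably small, and retaining the unavoidable coupling $\int_0^t\|\phi_x\|^2\,\mathrm{d}\tau$ on the right-hand side, yields \eqref{gujiExbxjieguo}.
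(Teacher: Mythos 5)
Your proposal follows essentially the same route as the paper: the same two-stage structure (the $E_x$ multiplier on the $x$-differentiated fourth equation combined with the fifth, then the $b_x$ multiplier on the undifferentiated fourth equation to recover $\int_0^t\|b_x\|^2\,\mathrm{d}\tau$), the same boundary identities $E_x(0,\tau)=\sqrt{\varepsilon}E_\tau(0,\tau)$ and $b_x(0,\tau)=\varepsilon E_\tau(0,\tau)+(1+u_-\sqrt{\varepsilon})E(0,\tau)$, and the same use of Lemma \ref{lemma-daiquanL2} for the $\hat{u}_x b E_x$ terms.

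One sign needs correcting. Since $-\int_0^{\infty}(EE_x)_x\,\mathrm{d}x=+(EE_x)(0,\tau)$, the boundary term produced by the cross term $\varepsilon\int\int E_\tau b_x$ sits on the \emph{right}-hand side of the $\|b_x\|^2$ identity as $+\varepsilon(EE_x)(0,\tau)=+\tfrac12\varepsilon^{3/2}\bigl(E^2(0,\tau)\bigr)_\tau$, which integrates to a \emph{bad} term $+\tfrac12\varepsilon^{3/2}E^2(0,t)$ (not, as you write, a good term $+\varepsilon^{3/2}E^2(0,t)$ appearing on the left). In the paper's \eqref{guji-boundary-eb3}--\eqref{gujibx-3} this term, after the second estimate is scaled by $8\beta_1^2$ and added to the first, eats into the good boundary energy and leaves the coefficient $\tfrac14-8\beta_1^2\varepsilon$ in front of $\sqrt{\varepsilon}E^2(0,t)$; keeping it positive is precisely the condition \eqref{jiedianxishu-no.4}, $\beta_1^2\varepsilon\le\tfrac{1}{64}$, which is one of the constraints determining $\bar{C}$. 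So the smallness of $\varepsilon$ is needed not only for $1+u_-\sqrt{\varepsilon}>0$ but also for this absorption; since you already assume $\varepsilon<\bar{C}$, the repair is immediate, but the argument as written misattributes the role of that hypothesis. A second, minor imprecision: the residue $\delta^3\int_0^t\varepsilon E^2(0,\tau)\,\mathrm{d}\tau$ is absorbed by the \emph{time-integrated} boundary good term $\int_0^t\sqrt{\varepsilon}E^2(0,\tau)\,\mathrm{d}\tau$ supplied by Lemma \ref{lemmadijieenergy}, not by the pointwise quantity $\sqrt{\varepsilon}E^2(0,t)$ recovered in this lemma.
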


\begin{proof}[Proof]
Firstly, taking the derivative of $ \eqref{raodong-fuhebo}_4 $ with respect to $ x $ and multiplying it by $ E_x $, then integrating the resulting equality with respect to $ x $, we obtain
\begin{equation}\label{Exbx1}
\frac{\mathrm{d}}{\mathrm{d}t}  \int_{\mathbb{R_+}}   \frac{1}{2} \varepsilon E_x^2 \,\mathrm{d}x   - \int_{\mathbb{R_+}} b_{xx} E_x \,\mathrm{d}x        + \int_{\mathbb{R_+}} E^2_x \,\mathrm{d}x  =   - \int_{\mathbb{R_+}} (\psi b)_x E_x \,\mathrm{d}x        - \int_{\mathbb{R_+}} (\hat{u} b)_x E_x \,\mathrm{d}x   .
\end{equation}
Secondly, taking the derivative of $ \eqref{raodong-fuhebo}_5 $ with respect to $ x $ and multiplying $ b_x $, then integrating the resulting equality with respect to $ x $, we get
\begin{equation}\label{Exbx2}
\frac{\mathrm{d}}{\mathrm{d}t}  \int_{\mathbb{R_+}}  \frac{1}{2}  b^2_x \,\mathrm{d}x    - \int_{\mathbb{R_+}} E_{xx}b_x \,\mathrm{d}x  =0.
\end{equation}
Combining $ \eqref{Exbx1} $ and $ \eqref{Exbx2} $, we obtain
\begin{align}\label{Exbx3}
&\quad \frac{\mathrm{d}}{\mathrm{d}t}  \int_{\mathbb{R_+}}  \frac{1}{2} \left( \varepsilon E_x^2  +  b_x^2 \right)  \,\mathrm{d}x       - \int_{\mathbb{R_+}} \left( E_x b_x \right)_x  \,\mathrm{d}x          + \int_{\mathbb{R_+}} E_x^2 \,\mathrm{d}x       \nonumber  \\[1mm]
&=  - \int_{\mathbb{R_+}} (\psi b)_x E_x \,\mathrm{d}x     - \int_{\mathbb{R_+}} (\hat{u} b)_x E_x \,\mathrm{d}x.
\end{align}


Now we estimate the boundary term in \eqref{Exbx3}.
The boundary condition $\sqrt{\varepsilon}E(0,t)  =  b(0,t)$  implies
\begin{equation}\label{bdycons-Ebguanxi-2}
 \sqrt{\varepsilon}E_t(0,t) = b_t(0,t) .
\end{equation}
Moreover, taking $ x = 0 $ for $\eqref{raodong-fuhebo}_5$ leads to
\begin{equation}\label{bdycons-Ebguanxi-3}
  b_t(0,t) = E_x(0,t).
\end{equation}
Combining \eqref{bdycons-Ebguanxi-2} and \eqref{bdycons-Ebguanxi-3}, we obtain
\begin{equation}\label{bdycons-Ebguanxi-4}
   E_x(0,t)  = b_t(0,t) =  \sqrt{\varepsilon} E_t(0,t) .
\end{equation}
On the other hand, taking $ x = 0 $ for $\eqref{raodong-fuhebo}_4$, together with the boundary condition $\sqrt{\varepsilon}E(0,t)  =  b(0,t)$, we have
\begin{equation}\label{bdycons-Ebguanxi-5}
  b_x(0,t) =  \varepsilon E_t(0,t)  + E(0,t) + u_-b(0,t)
           =  \varepsilon E_t(0,t)  + \left( 1 + u_- \sqrt{\varepsilon} \right)  E(0,t).
\end{equation}
Using \eqref{bdycons-Ebguanxi-4} and \eqref{bdycons-Ebguanxi-5}, we can derive
\begin{align}\label{bdy-guji-B2-kaishi}
     &\quad  - \int_0^t\int_{\mathbb{R_+}}  \left( E_x b_x  \right)_x  \,\mathrm{d}x \mathrm{d}\tau
      = \int_0^t  \left( E_x b_x \right) (0,\tau)  \,\mathrm{d}\tau       \nonumber \\[2mm]
     &=  \int_0^t    \sqrt{\varepsilon} E_\tau(0,\tau) \cdot \left[ \varepsilon E_\tau(0,\tau)       + \left(  1  + u_- \sqrt{\varepsilon} \right) E(0,\tau)  \right]     \,\mathrm{d}\tau        \nonumber \\[2mm]
     &=  \int_0^t  \varepsilon^{\frac{3}{2}} E^2_\tau(0,\tau)  \,\mathrm{d}\tau            + \sqrt{\varepsilon} \left(  1 +  u_- \sqrt{\varepsilon} \right)  \int_0^t  (E_\tau E)(0,\tau)  \,\mathrm{d}\tau      \nonumber \\[2mm]
     &=   \int_0^t  \varepsilon^{\frac{3}{2}} E^2_\tau(0,\tau)  \,\mathrm{d}\tau        +   \frac{1}{2} \sqrt{\varepsilon} \left(  1 +  u_- \sqrt{\varepsilon} \right)  E^2(0,t)       - \frac{1}{2} \sqrt{\varepsilon} \left(  1 +  u_- \sqrt{\varepsilon} \right) E^2(0,0) .
\end{align}
Due to $ u_- < 0  $, we have
\begin{equation*}
  1 + u_-\sqrt{\varepsilon} = 1 - \left| u_- \right| \sqrt{\varepsilon}  \geq   1 - \beta_1 \sqrt{\varepsilon}.
\end{equation*}
By choosing
\begin{equation}\label{jiedianxishu-bdyguji-2}
  1 - \beta_1 \sqrt{\varepsilon}  \geq  \frac{1}{2}, \quad  \mathrm{i.e.}  \quad   \beta_1^2 \varepsilon \leq  \frac{1}{4} ,
\end{equation}
we can get
\begin{equation}\label{bdy-guji-B21}
   \frac{1}{2} \sqrt{\varepsilon} \left(  1 +  u_- \sqrt{\varepsilon} \right)  E^2(0,t)    \geq  \frac{1}{4} \sqrt{\varepsilon} E^2(0,t) .
\end{equation}
Furthermore, using the Sobolev inequality implies
\begin{equation*}
  \left| - \frac{1}{2} \sqrt{\varepsilon} \left(  1 +  u_- \sqrt{\varepsilon} \right) E^2(0,0)  \right|  \leq     \left|  \frac{1}{2} \sqrt{\varepsilon} \left(  1 +  u_- \sqrt{\varepsilon} \right) \right|  \cdot \| E_0 \|^2_{L^\infty}   \leq  C \| E_0 \|^2_{H^1} .
\end{equation*}
It follows that
\begin{equation}\label{bdy-guji-B22}
  - \frac{1}{2} \sqrt{\varepsilon} \left(  1 +  u_- \sqrt{\varepsilon} \right) E^2(0,0)   \geq  - \, C \| E_0 \|^2_{H^1} .
\end{equation}
By plugging \eqref{bdy-guji-B21} and \eqref{bdy-guji-B22} into \eqref{bdy-guji-B2-kaishi}, we can deduce the following boundary estimate
\begin{equation}\label{guji-boundary-eb2}
    - \int_0^t\int_{\mathbb{R_+}}  \left( E_x b_x  \right)_x  \,\mathrm{d}x \mathrm{d}\tau    \geq    \int_0^t  \varepsilon^{\frac{3}{2}} E^2_\tau(0,\tau)  \,\mathrm{d}\tau         + \frac{1}{4} \sqrt{\varepsilon} E^2(0,t)      - C \| E_0 \|^2_{H^1}  .
\end{equation}

Next, by using \eqref{xianyanjiashe}, $ \left| \hat{u}(x,t) \right| < \frac{3}{2} \beta_1 $, and the Cauchy inequality, we have
\begin{align}\label{Exbxyuxiang-1}
  & \quad - \int_{\mathbb{R_+}} (\psi b)_x E_x \,\mathrm{d}x     - \int_{\mathbb{R_+}} (\hat{u} b)_x E_x \,\mathrm{d}x     \nonumber   \\[1mm]
  & \leq  \int_{\mathbb{R_+}}  \left(  \left| \psi_x b E_x \right|     + \left| \psi b_x  E_x  \right|       +  \left| \hat{u} b_x E_x  \right|       +  \left| \hat{u}_x b E_x \right|  \right)     \,\mathrm{d}x    \nonumber \\[1mm]
  & \leq  C (\| b \|_{{L^{\infty}}}  + \| \psi \|_{{L^{\infty}}}   ) \cdot ( \| \psi_x \|^2  + \| E_x \|^2  +  \| b_x \|^2 )      \nonumber   \\[1mm]
  & \quad   +   \frac{1}{4} \int_{\mathbb{R_+}} E_x^2 \,\mathrm{d}x   +  \frac{9}{4}  \beta_1^2  \int_{\mathbb{R_+}} b_x^2 \,\mathrm{d}x         + \int_{\mathbb{R_+}} \left| \hat{u}_x b E_x  \right| \,\mathrm{d}x   \nonumber \\[1mm]
  &  \leq C \varepsilon_0 \| (\psi_x, E_x, b_x) \|^2       +   \frac{1}{4} \| E_x \|^2      +  \frac{9}{4} \beta_1^2   \| b_x \|^2          + \int_{\mathbb{R_+}} \left| \hat{u}_x b E_x  \right| \,\mathrm{d}x.
\end{align}
Moreover, using the Cauchy inequality, \eqref{guji-bianjieceng^2quan}, \eqref{guji-xishubo^2quan} and $\sqrt{\varepsilon}E(0,t)  =  b(0,t)$ leads to
\begin{align}\label{Exbxyuxiang-2}
  \int_{\mathbb{R_+}} \left| \hat{u}_x b E_x \right|  \,\mathrm{d}x
  & \leq  \frac{1}{8} \int_{\mathbb{R_+}} E^2_x \,\mathrm{d}x      +  C \int_{\mathbb{R_+}} \hat{u}^2_x b^2  \,\mathrm{d}x     \leq  \frac{1}{8} \| E_x \|^2       +  C \int_{\mathbb{R_+}} \tilde{u}^2_x b^2  \,\mathrm{d}x           +  C \int_{\mathbb{R_+}} \bar{u}^2_x b^2  \,\mathrm{d}x          \nonumber \\[2mm]
  & \leq  \frac{1}{8} \| E_x \|^2       + C \delta ^3  b^2(0,t)      +  C \delta ^2 \| b_x \|^2         +  C \varepsilon_0 \alpha^{\frac{1}{3}}  \left(  \| b_x \|^2   +  (1+t)^{-\frac{4}{3}}  \right)          \nonumber   \\[2mm]
  & \leq  \frac{1}{8} \| E_x \|^2       + C \delta ^3  \varepsilon E^2(0,t)      +  C \left( \delta ^2 + \varepsilon_0 \alpha^{\frac{1}{3}}  \right)            \| b_x \|^2         +  C \varepsilon_0 \alpha^{\frac{1}{3}} (1+t)^{-\frac{4}{3}} .
\end{align}
Thus putting \eqref{Exbxyuxiang-1}-\eqref{Exbxyuxiang-2} into \eqref{Exbx3},
and integrating the resulting inequality with respect to $ t $,
then employing \eqref{guji-boundary-eb2} and choosing $ \varepsilon_0$, $\alpha $ and $ \delta  $ suitably small gives
\begin{align}\label{Exguji-2}
  &\qquad  \| (\sqrt{\varepsilon}E_x, b_x ) \|^2       + \frac{1}{4} \sqrt{\varepsilon} E^2(0,t)       + \int_0^t \| E_x \|^2  \,\mathrm{d}\tau       + \int_0^t \varepsilon^{\frac{3}{2}} E_\tau^2(0,\tau)  \,\mathrm{d}\tau         \nonumber \\[2mm]
  & \leq  C \left( \| E_0 \|^2_{H^1} + \| b_{0x} \|^2   \right)     +  C \alpha^{\frac{1}{3}}    + C \varepsilon_0 \int_0^t \| \psi_x \|^2  \,\mathrm{d}\tau     + C\delta ^3 \int_0^t \varepsilon E^2(0,\tau) \,\mathrm{d}\tau        + 5 \beta_1^2 \int_0^t \| b_x \|^2  \,\mathrm{d}\tau .
\end{align}


Now we turn to estimate the term $ 5 \beta_1^2 \int_0^t \| b_x \|^2  \,\mathrm{d}\tau  $ in \eqref{Exguji-2}.
Multiplying $\eqref{raodong-fuhebo}_{4}$ by $-b_x$ and integrating the resulting equation with respect to $x$, together with $\eqref{raodong-fuhebo}_{5}$, we can deduce
\begin{align}
& \quad -\frac{\mathrm{d}}{\mathrm{d}t} \int_{\mathbb{R_+}} \varepsilon Eb_x \,\mathrm{d}x + \int_{\mathbb{R_+}}  b_x^2 \,\mathrm{d}x             \nonumber  \\[2mm]
& =   -\int_{\mathbb{R_+}} \varepsilon E b_{tx} \,\mathrm{d}x     + \int_{\mathbb{R_+}} (E + \psi b + \hat{u}b )b_x \,\mathrm{d}x          =    -\int_{\mathbb{R_+}} \varepsilon E E_{xx}  \,\mathrm{d}x      + \int_{\mathbb{R_+}} (E + \psi b + \hat{u}b)b_x \,\mathrm{d}x               \nonumber  \\[2mm]
& =   - \int_{\mathbb{R_+}} \varepsilon (E E_x)_x  \,\mathrm{d}x         +  \int_{\mathbb{R_+}} \varepsilon E^2_{x} \,\mathrm{d}x     + \int_{\mathbb{R_+}} (E + \psi b + \hat{u}b)b_x \,\mathrm{d}x               \nonumber\\[2mm]
& \leq  - \int_{\mathbb{R_+}} \varepsilon (E E_x)_x  \,\mathrm{d}x        + \int_{\mathbb{R_+}} \varepsilon E^2_{x} \,\mathrm{d}x         + \frac{1}{2} \int_{\mathbb{R_+}} (E + \psi b + \hat{u}b)^2 \,\mathrm{d}x       + \frac{1}{2} \int_{\mathbb{R_+}} b^2_x \,\mathrm{d}x , \nonumber
\end{align}
i.e.
\begin{equation}\label{gujibx-2}
- \frac{\mathrm{d}}{\mathrm{d}t} \int_{\mathbb{R_+}} 2\varepsilon Eb_x \,\mathrm{d}x      + \| b_x \|^2
\leq       - \int_{\mathbb{R_+}} 2\varepsilon (E E_x)_x  \,\mathrm{d}x      +  2 \varepsilon \| E_{x} \|^2       + \| E + \psi b + \hat{u}b \|^2.
\end{equation}
For the first term on the right-hand side of \eqref{gujibx-2}, using \eqref{bdycons-Ebguanxi-4} gives the following boundary estimate
\begin{align}\label{guji-boundary-eb3}
  - \int_0^t\int_{\mathbb{R_+}}  2 \varepsilon \left( E E_x  \right)_x  \,\mathrm{d}x \mathrm{d}\tau
  & =   \int_0^t 2 \varepsilon (E E_x)(0,\tau)  \,\mathrm{d}\tau     =    \int_0^t 2 \varepsilon^{\frac{3}{2}} (E E_\tau)(0,\tau) \,\mathrm{d}\tau         \nonumber \\[2mm]
  & =   \varepsilon^{\frac{3}{2}} E^2(0,t)      -  \varepsilon^{\frac{3}{2}} E^2(0,0)  \leq    \varepsilon^{\frac{3}{2}} E^2(0,t) .
\end{align}
Multiplying \eqref{gujibx-2} by $ 8 \beta_1^2 $ and integrating the resulting inequality with respect to $ t $, then adding it to \eqref{Exguji-2} and employing \eqref{guji-boundary-eb3} leads to
\begin{align}\label{gujibx-3}
   &\quad   \| ( \sqrt{\varepsilon} E_x, b_x) \|^2        + \left( \frac{1}{4}  -  8\beta_1^2 \varepsilon \right) \sqrt{\varepsilon} E^2(0,t)       + 3 \beta_1^2 \int_0^t \| b_x \|^2 \,\mathrm{d}\tau         \nonumber  \\[2mm]
   &\quad   +  (1 - 16 \beta_1^2 \varepsilon)  \int_0^t \| E_x \|^2 \,\mathrm{d}\tau           +  \int_0^t \varepsilon^{\frac{3}{2}} E^2_\tau (0,\tau) \,\mathrm{d}\tau                           \nonumber  \\[2mm]
   & \leq   C \left( \| E_0 \|^2_{H^1}   +  \| b_{0x} \|^2  \right)      + C \alpha^{\frac{1}{3}}           +  8 \beta_1^2  \int_0^t \| E + \psi b + \hat{u} b \|^2   \,\mathrm{d}\tau                   \nonumber  \\[2mm]
   & \quad    +  C \varepsilon_0  \int_0^t \| \psi_x \|^2  \,\mathrm{d}\tau       + C \delta ^3 \int_0^t \varepsilon E^2(0,\tau) \,\mathrm{d}\tau                + 16 \beta_1^2  \int_{\mathbb{R_+}}  \varepsilon \left| E b_x \right|  \,\mathrm{d}x  .
\end{align}
In order to absorb the bad term $ 16 \beta_1^2  \int_{\mathbb{R_+}}  \varepsilon \left| E b_x \right|  \,\mathrm{d}x $ and hold the time-space integrable good term $ \int_0^t \| E_x \|^2 \,\mathrm{d}\tau  $, we choose $ \beta_1^2 \varepsilon $ satisfying
\begin{equation}\label{jiedianxishu-no.3}
1  -  16 \beta_1^2 \varepsilon \geq \frac{1}{2} , \quad \mathrm{i.e.} \quad  \beta_1^2 \varepsilon  \leq  \frac{1}{32}.
\end{equation}
Hence,
\begin{equation}\label{gaojiedanjifenguji-1}
   16 \beta_1^2  \int_{\mathbb{R_+}} \varepsilon \left| E b_x \right|  \,\mathrm{d}x      \leq   16 \beta_1^2  \varepsilon  \int_{\mathbb{R_+}} \left( \frac{1}{2}  b^2_x   +  \frac{1}{2} E^2  \right)  \,\mathrm{d}x    \leq     \frac{1}{4} \| b_x \|^2       +  8 \beta_1^2 \| \sqrt{\varepsilon} E \|^2 .
\end{equation}
What's more, taking
\begin{equation}\label{jiedianxishu-no.4}
  \frac{1}{4}  -  8 \beta_1^2 \varepsilon \geq \frac{1}{8}, \quad \mathrm{i.e.} \quad   \beta_1^2 \varepsilon \leq  \frac{1}{64} ,
\end{equation}
so that we can transform the term $ \left( 1/4  -  8\beta_1^2 \varepsilon \right) \sqrt{\varepsilon} E^2(0,t) $ in \eqref{gujibx-3} into a good term.
Putting \eqref{gaojiedanjifenguji-1} into \eqref{gujibx-3} and using \eqref{jibennengliang},
then choosing $ \varepsilon_0,\,\delta  $ and $ \alpha $ suitably small,
we can conclude \eqref{gujiExbxjieguo}.

Recalling $ \beta_1 = \max\{\left| u_- \right|, \left| u_+ \right|\} $, $  \beta_2 = \max\{\theta_-, \theta_+ \} $ and $ \beta_3 = \beta_1 + \sqrt{R \gamma \beta_2} $,
from the discussion about $ \varepsilon $ in \eqref{jiedianxishu-bdyguji-1}, \eqref{jiedianxishu-no.1}, \eqref{jiedianxishu-no.2}, \eqref{jiedianxishu-bdyguji-2}, \eqref{jiedianxishu-no.3} and \eqref{jiedianxishu-no.4},
we can determine the constant $ \bar{C} $ in the following manner:
\begin{align}\label{jiedianxishu-jieguo}
\varepsilon
&  <  \min \left\{ \frac{1}{16 \beta_1^2}, \;\; \frac{1}{64 \beta_1 \beta_3}, \;\; \frac{1}{9 \beta_1^2}, \;\; \frac{1}{4 \beta_1^2}, \;\; \frac{1}{32 \beta_1^2}, \;\; \frac{1}{64 \beta_1^2}  \right\}          \nonumber  \\[2mm]
&  =  \frac{1}{64 \beta_1}   \min \left\{ \frac{1}{ \beta_3}, \;\; \frac{1}{\beta_1}  \right\}      =  \frac{1}{64 \beta_1 \beta_3 }  =   \frac{1}{64 \beta_1  \left(  \beta_1    +  \sqrt{\gamma R \beta_2} \right) }      =: \bar{C},
\end{align}
such that $ \varepsilon < \bar{C} $.
This completes the proof of Lemma $\ref{gujiExbxlemma}$.




\end{proof}


In the following two lemmas, we show the estimates for the first-order derivative $ (\phi_x, \psi_x, \zeta_x) $.

\begin{lemma}\label{gujiphixlemma}
Suppose that the conditions in Proposition \ref{prop-1} hold.
Then for all $ 0 < t < T $, we have the following energy estimate
\begin{equation}\label{gujiphixjieguo}
\| \phi_x \|^2  +  \int_0^t   \left(  \phi_{ x}^2(0,\tau)    + \| \phi_x \|^2  \right)  \,\mathrm{d}\tau
  \leq   C \left(  \| (\psi_0, \zeta_{0}) \|^2    +  \| (\phi_0, E_0, b_0) \|^2_{H^1}    \right)     +  C (\delta  + \alpha^{\frac{1}{10}}) .
\end{equation}

\end{lemma}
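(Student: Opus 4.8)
Since the continuity equation $\eqref{raodong-fuhebo}_1$ is of first order, the derivative $\phi_x$ carries no parabolic dissipation of its own, so the plan is to recover it from the pressure gradient in the momentum equation $\eqref{raodong-fuhebo}_2$, whose leading contribution is $R\hat{\theta}\,\phi_x$. The device I would use is to eliminate the viscous term $\mu\psi_{xx}$ with the help of the continuity equation: writing $\psi_x=\rho^{-1}(f-\phi_t-u\phi_x)$ and differentiating in $x$, the dominant new contribution inside $\mu\psi_{xx}$ is $-\mu\rho^{-1}\phi_{tx}$. Feeding this back into $\eqref{raodong-fuhebo}_2$ and dividing by $\rho$, one arranges the momentum equation into the form
\begin{equation*}
\left(\psi+\frac{\mu}{\rho^2}\phi_x\right)_t+u\psi_x+\frac{R\hat{\theta}}{\rho}\phi_x=\mathcal{R},
\end{equation*}
where $\mathcal{R}$ collects $\zeta_x$, the remaining first-order and error terms $f,g$ from \eqref{raodongyuxiang-fuhebo}, the $\rho_t,\rho_x$-weighted factors, the spatial remainder $\mu\rho^{-1}\partial_x(\rho^{-1}(f-u\phi_x))$, and the electromagnetic forcing $-\rho^{-1}(E+\psi b+\hat{u}b)b$.

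\textbf{Main steps.} I would multiply this identity by $\phi_x$ and integrate over $\mathbb{R}_+$. The pressure term yields the sought good term $\int_{\mathbb{R}_+}\frac{R\hat{\theta}}{\rho}\phi_x^2\,\mathrm{d}x\geq c\|\phi_x\|^2$, using the lower bounds on $\hat{\theta}$ and $\rho$ from Lemma \ref{gezhongjie}. The contribution $\int(\frac{\mu}{\rho^2}\phi_x)_t\phi_x\,\mathrm{d}x$ produces $\frac{\mathrm{d}}{\mathrm{d}t}\int\frac{\mu}{2\rho^2}\phi_x^2\,\mathrm{d}x$ up to a $\rho_t$-weighted remainder, while $\int\psi_t\phi_x\,\mathrm{d}x=\frac{\mathrm{d}}{\mathrm{d}t}\int\psi\phi_x\,\mathrm{d}x-\int\psi\phi_{xt}\,\mathrm{d}x$; in the last integral I integrate by parts and substitute $\phi_t=f-u\phi_x-\rho\psi_x$, the boundary contribution vanishing because $\psi(0,t)=0$ by \eqref{raodongbdycon}, leaving a combination of $\|\psi_x\|^2$ and terms absorbable into the good $\|\phi_x\|^2$ by Cauchy's inequality. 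The convective term $\int u\psi_x\phi_x$ and the electromagnetic term are handled by Cauchy's inequality, the former against $\|\psi_x\|^2$ and the latter against $\|b\|_{L^\infty}^2\,\|E+\psi b+\hat{u}b\|^2$ with $\|b\|_{L^\infty}\leq\sqrt{2}\varepsilon_0$ via \eqref{sobolevlwuqiong}; all of these are time-integrable thanks to Lemma \ref{lemmadijieenergy} and Lemma \ref{gujiExbxlemma}.

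\textbf{The boundary term.} The good boundary term $\int_0^t\phi_x^2(0,\tau)\,\mathrm{d}\tau$ is precisely where the outflow sign $u_-<0$ enters. It emerges when the spatial remainder $\mu\rho^{-1}\partial_x(\rho^{-1}(f-u\phi_x))$ is integrated by parts against $\phi_x$: the boundary value at $x=0$ contributes a term proportional to $u_-\phi_x^2(0,t)$, and since $u_-<0$ this has exactly the right sign to be retained as the good term $c\int_0^t\phi_x^2(0,\tau)\,\mathrm{d}\tau$ on the left-hand side. The accompanying boundary cross term involving $f(0,\tau)$ is controlled by Cauchy's inequality with a small constant, absorbed into this boundary good term, together with the explicit expression of $f$ at $x=0$.

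\textbf{Closing and main obstacle.} Integrating in time, the total-derivative terms give $\|\phi_x\|^2$ at time $t$ (the cross term $\int\psi\phi_x$ absorbed by Cauchy and by the coercive $\int\frac{\mu}{2\rho^2}\phi_x^2$) plus initial data in $\|(\psi_0,\zeta_0)\|^2+\|(\phi_0,E_0,b_0)\|_{H^1}^2$. All remaining right-hand integrals reduce, via Lemma \ref{lemmadijieenergy} and Lemma \ref{gujiExbxlemma}, to $C(\delta+\alpha^{\frac{1}{10}})$ plus small multiples of $\int_0^t\|\phi_x\|^2\,\mathrm{d}\tau$; choosing $\delta,\alpha,\varepsilon_0$ suitably small absorbs the latter into the left-hand side and yields \eqref{gujiphixjieguo}. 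I expect the main obstacle to be bookkeeping in $\mathcal{R}$ — in particular verifying that the $\rho_t$- and $\rho_x$-weighted remainders and the electromagnetic forcing remain subordinate to the good terms under the smallness of $\delta$, $\alpha$, $\varepsilon_0$ and $\varepsilon$ — rather than a conceptual difficulty, since the genuinely delicate point is simply capturing the correct sign of the $u_-$ boundary contribution.
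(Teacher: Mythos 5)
Your proposal is correct and follows essentially the same route as the paper: the paper likewise eliminates the term $\mu\phi_x\psi_{xx}/\rho^2$ by adding $\mu$ times the $x$-differentiated continuity equation (weighted by $\phi_x/\rho^3$) to the momentum equation (weighted by $\phi_x/\rho^2$), producing the total derivative $\bigl(\tfrac{\mu\phi_x^2}{2\rho^3}+\tfrac{\phi_x\psi}{\rho}\bigr)_t$, the coercive term $\tfrac{R\theta}{\rho^2}\phi_x^2$, and the boundary good term from the flux $\tfrac{\mu u\phi_x^2}{2\rho^3}$ via $u_-<0$, exactly as in your rearrangement. The remainder is then closed with Lemmas \ref{lemmadijieenergy} and \ref{gujiExbxlemma} and the smallness of $\delta$, $\alpha$, $\varepsilon_0$, as you describe.
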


\begin{proof}

Motivated by the work of \cite{Kawashima1986-feidengshang} and \cite{qinxiaohong-2011}, firstly, differentiating  $\eqref{raodong-fuhebo}_{1}$ with respect to $x$ and multiplying the resulting equation by $ \frac{\phi_x}{\rho^3}$ yields
\begin{equation}\label{guji-phix1}
  \left(\frac{\phi_{x}^{2}}{2 \rho^{3}}\right)_{t}     + \left(\frac{u \phi_{x}^{2}}{2 \rho^{3}}\right)_{x}    -\hat{u}_{x} \frac{\phi_{x}^{2}}{\rho^{3}}      + \hat{\rho}_{x} \frac{\phi_{x} \psi_{x}}{\rho^{3}}     + \frac{\phi_{x} \psi_{x x}}{\rho^{2}}     =  f_{x} \frac{\phi_{x}}{\rho^{3}}  .
\end{equation}


To remove the high-order spatial derivative term $ \frac{\phi_x \psi_{xx}}{\rho^2}  $ of \eqref{guji-phix1},
we need to employ the equation $\eqref{raodong-fuhebo}_2$.
Multiplying $\eqref{raodong-fuhebo}_2$ by $ \frac{\phi_x}{\rho^2} $, after some elementary computations, we can get
\begin{align}\label{guji-varphix2}
&  \left(\frac{\phi_{x} \psi}{\rho}\right)_{t}-\left(\frac{\phi_{t} \psi}{\rho}+\frac{\hat{\rho}_{x} \psi^{2}}{\rho}\right)_{x}+\frac{\hat{\rho}_{x} \hat{u}_{x} \phi \psi}{\rho^{2}}+\frac{\hat{\rho}_{x x} \psi^{2}}{\rho}        + \frac{\left(\hat{\rho}_{x} \hat{u}-\bar{\rho} \bar{u}_{x}-\bar{\rho}_{x} \bar{u}\right) \psi \phi_{x}}{\rho^{2}}           - \psi_{x}^{2}      \nonumber   \\[2mm]
&   +  \frac{\left(2 \hat{\rho}_{x} \psi-\hat{u}_{x} \phi\right) \psi_{x}}{\rho}       + \frac{(p - \hat{p})_{x} \phi_{x}}{\rho^{2}}      - \frac{\hat{f} \psi_{x}}{\rho}      + \frac{\hat{\rho}_{x} \hat{f} \psi}{\rho^{2}}     =  \mu \frac{\phi_{x} \psi_{x x}}{\rho^{2}}   +   g \frac{\phi_{x}}{\rho^{2}}           -  (E + \psi b + \hat{u}b ) b \frac{\phi_x}{\rho^2}  .
\end{align}
$ \mu \times \eqref{guji-phix1}  + \eqref{guji-varphix2} $ gives
\begin{equation}\label{guji-Q4-1}
  \left( \frac{\mu \phi_{x}^{2}}{2 \rho^{3}}   + \frac{\phi_{x} \psi}{\rho} \right)_{t}     + \left(\frac{\mu u \phi_{x}^{2}}{2 \rho^{3}}-\frac{\phi_{t} \psi}{\rho}-\frac{\hat{\rho}_{x} \psi^{2}}{\rho}\right)_{x}       + \frac{ R \theta }{\rho^{2}} \phi_{x}^{2}   =   Q_{4}     -  (E + \psi b + \hat{u}b ) b \frac{\phi_x}{\rho^2}  ,
\end{equation}
where
\begin{align}
 Q_{4}
&  =  -\frac{2 \mu \hat{\rho}_{x} \phi_{x} \psi_{x}}{\rho^{3}}-\frac{\mu\left(\hat{u}_{x x} \phi+\hat{\rho}_{x x} \psi+\hat{f}_{x}\right) \phi_{x}}{\rho^{3}}-\frac{\hat{\rho}_{x} \hat{u}_{x} \psi \phi}{\rho^{2}}-\frac{\hat{\rho}_{x x} \psi^{2}}{\rho}      \nonumber  \\[2mm]
&\quad   - \frac{\left[\tilde{\rho}_{x} \hat{u}-\bar{\rho} \bar{u}_{x}+\bar{\rho}_{x}\left(\tilde{u}-u_{*}\right)\right] \psi \phi_{x}}{\rho^{2}}-2 \frac{\hat{\rho}_{x} \psi \psi_{x}}{\rho}+\psi_{x}^{2}      \nonumber  \\[2mm]
&\quad   +\frac{\hat{u}_{x} \phi \psi_{x}}{\rho}-\frac{\left(R \hat{\rho}_{x} \zeta +R \rho \zeta_{x}+R \hat{\theta}_{x} \phi\right) \phi_{x}}{\rho^{2}}+\frac{\hat{f} \psi_{x}}{\rho}-\frac{\hat{\rho}_{x} \hat{f} \psi}{\rho^{2}}      \nonumber  \\[2mm]
&\quad   -\left(\rho \hat{\rho} \hat{u}_{x} \psi-\hat{p}_{x} \phi+\mu \tilde{u}_{x x} \phi-\mu \hat{\rho} \bar{u}_{x x}+\rho \hat{g}\right) \frac{\phi_{x}}{\hat{\rho} \rho^{2}}      \nonumber  \\[2mm]
& \leq  \frac{R \theta}{2 \rho^{2}} \phi_{x}^{2}+C\left(\psi_{x}^{2}+\zeta_{x}^{2}\right)+C\left(\hat{f}^{2}+\hat{f}_{x}^{2}+\hat{g}^{2}+\bar{u}_{x x}^{2}\right)      \nonumber  \\[2mm]
&\quad   + C\left[\left(\tilde{u}_{x}^{2}+\left|\tilde{u}_{x x}\right|\right)+\left(\bar{u}_{x}^{2}+\left|\bar{u}_{x x}\right|\right)\right]\left(\phi^{2}+\psi^{2}+\zeta^{2}\right) .
\end{align}
Integrating \eqref{guji-Q4-1} with respect to $ x $ and $ t $, due to the good sign of $ u_- $ again, we obtain
\begin{align}\label{guji-phix-1}
&  \| \phi_x \|^2  +  \int_0^t   \left(  \phi_{ x}^2(0,\tau)    + \| \phi_x \|^2  \right)  \,\mathrm{d}\tau     \leq   C \| (\psi_0, \phi_{0 x}) \|^2        + \sum_{i=1}^3 K_i        \nonumber \\[2mm]
&  \qquad \quad  + C \left( \| \psi \|^2    + \int_0^t \| (\psi_x, \zeta_x, E + \psi b + \hat{u} b) \|^2  \,\mathrm{d}\tau   \right)  ,
\end{align}
where
\begin{align}\label{}
& K_1  : =  C \int_{0}^{t} \int_{\mathbb{R}_{+}}\left(\hat{f}^{2}+\hat{f}_{x}^{2}+\hat{g}^{2}+\bar{u}_{x x}^{2}\right) \,\mathrm{d} x \mathrm{d} \tau ,       \nonumber \\[2mm]
& K_2  : =  C \int_{0}^{t} \int_{\mathbb{R}_{+}}\left(\tilde{u}_{x}^{2}+\left|\tilde{u}_{x x}\right|\right)\left(\phi^{2}+\psi^{2}   + \zeta^{2}\right) \, \mathrm{d} x \mathrm{d} \tau  ,     \nonumber \\[2mm]
& K_3  : =  C \int_{0}^{t} \int_{\mathbb{R}_{+}}\left(\bar{u}_{x}^{2}+\left|\bar{u}_{x x}\right|\right)\left(\phi^{2}+\psi^{2} + \zeta^{2}\right)  \, \mathrm{d} x \mathrm{d} \tau  .   \nonumber
\end{align}


Recalling $ \bar{u}(0,t) = u_* $ and using the following elementary inequality on $ \bar{u} - u_* $:
\begin{equation}\label{budengshi-poincare2}
  \left| z(x,t) \right| = \left| z(0,t) + \int_0^x  z_y(y,t)  \, \mathrm{d} y  \right|  \leq  \left| z(0,t) \right| + x \| z_x \|_{ L^\infty_x } ,
\end{equation}
we can derive
\begin{align}\label{guji-K1}
K_1
& \leq  C \int_0^t\int_{\mathbb{R_+}} \left[  (\bar{u} - u_*)^2 \tilde{u}_x^2  +  (\tilde{u} - u_* )^2 \bar{u}_x^2  + \bar{u}_{xx}^2   \right]  \,\mathrm{d}x \mathrm{d}\tau    \nonumber \\[2mm]
& \leq  C \int_0^t\int_{\mathbb{R_+}}  \left[  x^2 \| \bar{u}_x \|^2_{L^\infty}  \tilde{u}_x^2      +  (\tilde{u} - u_* )^2 \bar{u}_x^2  + \bar{u}_{xx}^2   \right]  \,\mathrm{d}x \mathrm{d}\tau       \leq  C (\delta + \alpha ) .
\end{align}
Moreover, it is easy to deduce
\begin{align}
& K_2  \leq C \delta   \int_0^t \left(  \phi^2(0,\tau)    +    \| (\phi_x, \psi_x, \zeta_x) \|^2 \right)   \,\mathrm{d}\tau ,   \label{guji-K2}      \\[2mm]
& K_3 \leq C \alpha^{\frac{1}{3}} \left( 1  +  \int_0^t \| (\phi_x,\psi_x,\zeta_x) \|^2   \,\mathrm{d}\tau  \right) .      \label{guji-K3}
\end{align}
Plugging \eqref{guji-K1}-\eqref{guji-K3} into \eqref{guji-phix-1} then employing \eqref{jibennengliang} and \eqref{gujiExbxjieguo}, together with taking $ \delta  $ and $ \alpha $ small enough, we can conclude \eqref{gujiphixjieguo}.
This completes the proof of Lemma $\ref{gujiphixlemma}$.

\end{proof}


\begin{lemma}\label{gujipsixzetaxlemma}
Suppose that the conditions in Proposition \ref{prop-1} hold.
Then for all $ 0 < t < T $, we have the following energy estimate
\begin{equation}\label{guji-psixx,zetaxxjieguo}
 \| (\psi_x, \zeta_x) \|^2         + \int_0^t \| (\psi_{xx}, \zeta_{xx}) \|^2  \,\mathrm{d}\tau       \leq   C \left(   \| (\phi_0, \psi_0, \zeta_0, E_0, b_0) \|^2_{H^1}    + \delta   + \alpha^{\frac{1}{10}}  \right) .
\end{equation}

\end{lemma}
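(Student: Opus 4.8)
The plan is to carry out the standard first-order energy estimate for the parabolic components, multiplying the momentum equation $\eqref{raodong-fuhebo}_2$ by $-\psi_{xx}$ and the energy equation $\eqref{raodong-fuhebo}_3$ by $-\zeta_{xx}$, integrating over $\mathbb{R_+}\times[0,t]$ and adding the two with suitable weights. The time-derivative terms, after an integration by parts in $x$ that exploits $\psi(0,t)=\zeta(0,t)=0$ (hence $\psi_t(0,t)=\zeta_t(0,t)=0$) from \eqref{raodongbdycon}, reproduce $\frac{\mathrm{d}}{\mathrm{d}t}\int_{\mathbb{R_+}}\tfrac12\rho\psi_x^2\,\mathrm{d}x$ and $\frac{\mathrm{d}}{\mathrm{d}t}\int_{\mathbb{R_+}}\tfrac{R}{2(\gamma-1)}\rho\zeta_x^2\,\mathrm{d}x$, while the viscous and heat-conduction terms furnish the good dissipation $\mu\|\psi_{xx}\|^2$ and $\kappa\|\zeta_{xx}\|^2$. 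The convection terms produce boundary contributions proportional to $\rho(0,t)u_-\psi_x^2(0,t)$ and $\rho(0,t)u_-\zeta_x^2(0,t)$; since $u_-<0$ these land on the unfavorable side, but they are controlled by the trace form of the Sobolev inequality \eqref{sobolevlwuqiong}, $\psi_x^2(0,t)\le 2\|\psi_x\|\|\psi_{xx}\|$, which absorbs a small multiple of $\|\psi_{xx}\|^2$ into the dissipation and leaves $\int_0^t\|\psi_x\|^2\,\mathrm{d}\tau$, already controlled by the zero-order estimate \eqref{jibennengliang}.

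First I would dispose of the pressure coupling without integrating by parts: estimating $-\int_{\mathbb{R_+}}(p-\hat p)_x\psi_{xx}\,\mathrm{d}x$ and $-\int_{\mathbb{R_+}}p\psi_x\zeta_{xx}\,\mathrm{d}x$ directly by the Cauchy inequality produces $\tfrac\mu8\|\psi_{xx}\|^2+\tfrac\kappa8\|\zeta_{xx}\|^2$ plus $C(\|\phi_x\|^2+\|\psi_x\|^2+\|\zeta_x\|^2)$ together with smooth-profile remainders, which sidesteps any $\psi_x(0,t)$ or $\zeta_x(0,t)$ boundary terms from the pressure. The lower-order terms created by the time-derivative and convection manipulations (those carrying $\rho_t$, $\rho_x$, $(\rho u)_x$) are likewise $L^\infty$-small and absorbed.

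The delicate part is the electromagnetic forcing. For $\int_{\mathbb{R_+}}(E+\psi b+\hat u b)b\,\psi_{xx}\,\mathrm{d}x$ I would use $|b|\le\|b\|_{L^\infty}\le\sqrt2\varepsilon_0$ and Cauchy to bound it by $\tfrac\mu8\|\psi_{xx}\|^2+C\varepsilon_0^2\|E+\psi b+\hat u b\|^2$; likewise $\int_{\mathbb{R_+}}(E+\psi b+\hat u b)^2\zeta_{xx}\,\mathrm{d}x\le\tfrac\kappa8\|\zeta_{xx}\|^2+C\|E+\psi b+\hat u b\|_{L^\infty}^2\|E+\psi b+\hat u b\|^2$, and $\int_{\mathbb{R_+}}\mu\psi_x^2\zeta_{xx}\,\mathrm{d}x\le\tfrac\kappa8\|\zeta_{xx}\|^2+C\|\psi_x\|_{L^\infty}^2\|\psi_x\|^2$. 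The crucial point, as stressed in the remarks following Theorem \ref{wendingxing-bianjieceng}, is that the magnetic field carries no separate time--space dissipation, so every electromagnetic remainder must be packaged against the single compound good term $\int_0^t\|E+\psi b+\hat u b\|^2\,\mathrm{d}\tau$ with a small prefactor (either $\varepsilon_0$ or an $L^\infty$ norm made small by \eqref{raodongwuqiongmo}); this term is exactly what the zero-order estimate \eqref{jibennengliang} supplies. I expect this to be the main obstacle, the quartic contribution $(E+\psi b+\hat u b)^2\zeta_{xx}$ being the sharpest, since it forces reliance on the $L^\infty$ smallness of $E+\psi b+\hat u b$ rather than on any genuine dissipation of $b$.

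Finally, the wave-interaction remainders coming from $f,g,h$ in \eqref{raodongyuxiang-fuhebo} and from $\tilde u_{xx},\bar u_{xx}$ are handled as in Lemma \ref{gujiphixlemma}: using the algebraic decay of the degenerate boundary layer \eqref{shuaijian-tuihua}, the rarefaction-wave bounds \eqref{shuaijian-rare-yijiedao}--\eqref{shuaijian-rare-erjiedao}, the pointwise bounds of Lemma \ref{gezhongjie}, and the weighted inequalities \eqref{guji-bianjieceng^2quan}--\eqref{guji-xishubo^2quan}, so that all of these integrate in time to a contribution of order $C(\delta+\alpha^{\frac1{10}})$. After integrating in $t$, the initial data give $\|(\psi_{0x},\zeta_{0x})\|^2\le\|(\psi_0,\zeta_0)\|_{H^1}^2$; I then absorb the residual $\int_0^t\|\phi_x\|^2\,\mathrm{d}\tau$ by \eqref{gujiphixjieguo} and the residual $\int_0^t\|(\psi_x,\zeta_x,E+\psi b+\hat u b)\|^2\,\mathrm{d}\tau$ by \eqref{jibennengliang}, and choose $\varepsilon_0,\delta,\alpha$ suitably small to arrive at \eqref{guji-psixx,zetaxxjieguo}.
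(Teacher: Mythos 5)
Your proposal is correct and follows essentially the same route as the paper: the paper multiplies $\eqref{raodong-fuhebo}_2$ by $\psi_{xx}/\rho$ and $\eqref{raodong-fuhebo}_3$ by $\zeta_{xx}/\rho$ (equivalent, up to harmless commutators, to your $-\psi_{xx}$, $-\zeta_{xx}$ multipliers), integrates by parts using $\psi(0,t)=\zeta(0,t)=0$, controls the resulting traces $\psi_x^2(0,\tau)$, $\zeta_x^2(0,\tau)$ by exactly the interpolation $\|\psi_x\|\,\|\psi_{xx}\|$ you invoke, and absorbs the electromagnetic forcing terms into $\int_0^t\|E+\psi b+\hat u b\|^2\,\mathrm{d}\tau$ with an $\varepsilon_0$ prefactor before closing via Lemmas \ref{lemmadijieenergy}--\ref{gujiphixlemma}. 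No substantive difference.
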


\begin{proof}

Motivated by the treatment can be done for the Navier-Stokes equations,
we multiply $\eqref{raodong-fuhebo}_2$ by $\frac{\psi_{xx}}{\rho}$ to obtain
\begin{align}\label{}
   \left( \frac{1}{2} \psi_x^2 \right)_t  - \left( \psi_t \psi_x  +  \frac{1}{2} u \psi_x^2  \right)_x      +  \mu \frac{\psi_{xx}^2 }{\rho}     &  =  - \frac{1}{2} u_x \psi_x^2    + (p - \hat{p})_x \frac{\psi_{xx}}{\rho}    - g \frac{\psi_{xx}}{\rho}       \nonumber \\[2mm]
   & \quad  + (E + \psi b  + \hat{u} b)b \frac{\psi_{xx}}{\rho}    .     \nonumber
\end{align}
Integrating the above equality over $  \mathbb{R}_+ \times [0,t] $ gives
\begin{align}\label{guji-psixx-chubu}
&  \| \psi_x \|^2      +  \int_0^t \| \psi_{xx} \|^2  \,\mathrm{d}\tau       + \int_0^t \| \sqrt{\bar{u}_x} \psi_x \|^2  \,\mathrm{d}\tau      \leq   C \| \psi_{0x} \|^2      +  C \int_0^t \psi_x^2 (0,\tau) \,\mathrm{d}\tau      + C \int_0^t\int_{\mathbb{R_+}} \left| \psi_x \right|^3   \,\mathrm{d}x \mathrm{d}\tau        \nonumber \\[2mm]
&\qquad\qquad\qquad\qquad  + C \int_0^t\int_{\mathbb{R_+}} | \tilde{u}_x | \psi_x^2  \,\mathrm{d}x \mathrm{d}\tau      + C \int_0^t\int_{\mathbb{R_+}} \left[ (p - \hat{p})_x^2  +  (E + \psi b + \hat{u} b)^2   +  g^2  \right]  \,\mathrm{d}x \mathrm{d}\tau  .
\end{align}
Utilizing the Sobolev inequality yields
\begin{align}\label{guji-zaxiang-1}
C \int_0^t \psi_x^2(0,\tau) \,\mathrm{d}\tau   \leq  C \int_0^t \| \psi_x \|^2_{L^\infty_x}  \,\mathrm{d}\tau    \leq   C  \int_0^t \| \psi_x \|   \| \psi_{xx} \|    \,\mathrm{d}\tau          \nonumber   \\[1mm]
\leq   \frac{1}{4} \int_0^t \| \psi_{xx} \|^2  \,\mathrm{d}\tau     + C \int_0^t \| \psi_x \|^2  \,\mathrm{d}\tau   ,
\end{align}
and
\begin{align}\label{guji-zaxiang-2}
&\quad  C\int_0^t\int_{\mathbb{R_+}}  \left| \psi_x \right|^3   \,\mathrm{d}x \mathrm{d}\tau     \leq    C \int_0^t \| \psi_x \|_{L^\infty_x} \| \psi_x \|^2   \,\mathrm{d}\tau     \leq  C \int_0^t  \| \psi_{xx} \|^{\frac{1}{2}} \| \psi_x \|^{\frac{5}{2}}  \,\mathrm{d}\tau       \nonumber  \\[1mm]
&  \leq   \frac{1}{4} \int_0^t \| \psi_{xx} \|^2  \,\mathrm{d}\tau      +  C  \sup_{0 \leq \tau \leq t} \| \psi_x \|^{\frac{4}{3}}   \int_0^t  \| \psi_x \|^2   \,\mathrm{d}\tau          \leq   \frac{1}{4} \int_0^t \| \psi_{xx} \|^2  \,\mathrm{d}\tau        + C \varepsilon_0^{\frac{4}{3}}  \int_0^t  \| \psi_x \|^2   \,\mathrm{d}\tau .
\end{align}
Similar to the treatment of \eqref{guji-K1}, by employing \eqref{guji-bianjieceng^2quan} and \eqref{guji-xishubo^2quan} in Lemma \ref{lemma-daiquanL2}, we have
\begin{align}\label{guji-zaxiang-3}
&\quad  C \int_0^t\int_{\mathbb{R_+}} | \tilde{u}_x | \psi_x^2  \,\mathrm{d}x \mathrm{d}\tau      + C \int_0^t\int_{\mathbb{R_+}} \left[ (p - \hat{p})_x^2  +  (E + \psi b + \hat{u} b)^2   +  g^2  \right]  \,\mathrm{d}x \mathrm{d}\tau     \nonumber \\[1mm]
& \leq    C \int_0^t\int_{\mathbb{R_+}}   \left[  | \tilde{u}_x | \psi_x^2       + ( \phi_x^2  +  \zeta_x^2  + \phi^2 \zeta_x^2  + \zeta^2 \phi_x^2 )     +  (\hat{\theta}_x^2 \phi^2  +   \hat{\rho}_x^2 \zeta^2 )    +  g^2       +  (E + \psi b + \hat{u} b)^2  \right]    \,\mathrm{d}x \mathrm{d}\tau       \nonumber \\[1mm]
& \leq  C \left\{    \int_0^t   \left[ \| (\phi_x,\psi_x,\zeta_x) \|^2     + \phi^2(0,\tau)     + \| E+ \psi b + \hat{u}b \|^2   \right]    \,\mathrm{d}\tau           +  \alpha^{\frac{1}{3}}  + \delta     \right\}   .
\end{align}
Plugging \eqref{guji-zaxiang-1}-\eqref{guji-zaxiang-3} into \eqref{guji-psixx-chubu},
then employing \eqref{jibennengliang}, \eqref{gujiExbxjieguo} and \eqref{gujiphixjieguo}, we can obtain
\begin{equation}\label{guji-psixx-jieguo}
\| \psi_x \|^2      +  \int_0^t \| \psi_{xx} \|^2  \,\mathrm{d}\tau       + \int_0^t \| \sqrt{\bar{u}_x} \psi_x \|^2  \,\mathrm{d}\tau    \leq   C \left( \| \zeta_0 \|^2    +  \| (\phi_0, \psi_0, E_0, b_0) \|^2_{H^1}    + \delta   + \alpha^{\frac{1}{10}}  \right)  .
\end{equation}


On the other hand, multiplying $\eqref{raodong-fuhebo}_3$ by $ \frac{\zeta_{xx}}{\rho} $, we can get
\begin{align}
  & \quad  \frac{R}{\gamma-1} \left[ \frac{1}{2} \left( \zeta_x^2 \right) _t  -  \left( \zeta_t \zeta_x  +  \frac{1}{2}  u \zeta_x^2 \right) _x    \right]     +  \frac{R}{2(\gamma-1)} \bar{u}_x \zeta_x^2     + \kappa \frac{\zeta_{xx}^2}{\rho}                    \nonumber \\[2mm]
  &   =   R \theta  \psi_x \zeta_{xx}   - \frac{R}{2(\gamma-1)} (\psi_x  +  \tilde{u}_x)\zeta_x^2      - \mu \psi_x^2 \frac{\zeta_{xx}}{\rho}    - h \frac{\zeta_{xx}}{\rho}     -  (E + \psi b + \hat{u} b)^2 \frac{\zeta_{xx}}{\rho}.    \nonumber
\end{align}
Integrating the above equality over $  \mathbb{R}_+ \times [0,t] $ gives
\begin{align}\label{guji-zetaxx-chubu}
 &\| \zeta_x \|^2        + \int_0^t \| \zeta_{xx} \|^2  \,\mathrm{d}\tau         +  \int_0^t \| \sqrt{\bar{u}_x} \zeta_x \|^2  \,\mathrm{d}\tau      \leq    C \| \zeta_{0x} \|^2    +  C \int_0^t  \zeta_x^2(0,\tau) \,\mathrm{d}\tau           \nonumber  \\[2mm]
 &\quad  + C \int_0^t\int_{\mathbb{R_+}}    \left\{ \left| \psi_x + \tilde{u}_x \right| \zeta_x^2 +   \left[ \left| \psi_x \right|  + \psi_x^2    + \left| h \right|      + (E + \psi b + \hat{u}b )^2  \right] \cdot \left| \zeta_{xx} \right|     \right\}    \,\mathrm{d}x \mathrm{d}\tau   .
\end{align}
Analogously, we can also obtain the following estimates:
\begin{align}\label{guji-zetaxx-1}
&\quad  C \int_0^t \zeta^2_x(0,\tau) \,\mathrm{d}\tau    + C \int_0^t\int_{\mathbb{R_+}}    \left\{ \left| \psi_x + \tilde{u}_x \right| \zeta_x^2 +   \left( \left| \psi_x \right|  + \psi_x^2    + \left| h \right|   \right)   \left| \zeta_{xx} \right|     \right\}    \,\mathrm{d}x \mathrm{d}\tau       \nonumber  \\[2mm]
& \leq      \frac{1}{4} \int_0^t \| \zeta_{xx} \|^2  \,\mathrm{d}\tau           +   C \left\{   \int_0^t  \left[ \| ( \phi_x, \psi_x, \zeta_x, \psi_{xx} ) \|^2    +  \phi^2(0,\tau) \right]   \,\mathrm{d}\tau        + \delta   +  \alpha^{\frac{1}{3}} \right\}    ,
\end{align}
and
\begin{align}\label{guji-zetaxx-2}
\int_0^t\int_{\mathbb{R_+}}   (E + \psi b + \hat{u}b )^2    \left| \zeta_{xx} \right|   \,\mathrm{d}x \mathrm{d}\tau   \leq  C \varepsilon_0 \int_0^t \| E + \psi b + \hat{u} b \| \| \zeta_{xx} \|   \,\mathrm{d}\tau     \nonumber \\[2mm]
\leq  \frac{1}{4} \int_0^t  \| \zeta_{xx} \|^2  \,\mathrm{d}\tau     + C \int_0^t  \| E + \psi b + \hat{u}b \|^2  \,\mathrm{d}\tau   .
\end{align}
Combining \eqref{guji-zetaxx-chubu}-\eqref{guji-zetaxx-2}, then employing \eqref{jibennengliang}, \eqref{gujiExbxjieguo}, \eqref{gujiphixjieguo} and \eqref{guji-psixx-jieguo} yields
\begin{equation}\label{guji-zetaxx-jieguo}
 \| \zeta_x \|^2        + \int_0^t \| \zeta_{xx} \|^2  \,\mathrm{d}\tau         +  \int_0^t \| \sqrt{\bar{u}_x} \zeta_x \|^2  \,\mathrm{d}\tau       \leq   C \left(   \| (\phi_0, \psi_0, \zeta_0, E_0, b_0) \|^2_{H^1}    + \delta   + \alpha^{\frac{1}{10}}  \right)   .
\end{equation}

Lemma \ref{gujipsixzetaxlemma} thus follows easily from \eqref{guji-psixx-jieguo} and \eqref{guji-zetaxx-jieguo}.

\end{proof}


\begin{proof}[Proof of Proposition \ref{prop-1}:]
We combine Lemma $\ref{lemmadijieenergy}$-Lemma $\ref{gujipsixzetaxlemma}$,
then choose $\varepsilon_0$, $\delta $ and $\alpha$ small enough to finish the proof of Proposition \ref{prop-1}.
\end{proof}


\section{Appendix: Derivation of 1-D models}

\noindent In this appendix, we will give a mathematical derivation of system \eqref{yuanfangcheng} for the completeness.
As a by-product, we also obtain some additional 1-D models.

NSM concerns the motion of conducting fluids (gases) in the electromagnetic fields with a very broad range of applications.
Since the dynamic motion of the fluid and the electromagnetic fields interact strongly on each other,
we must take into account the hydrodynamic and electrodynamic effects for the governing system.
The equations of NSM flows have the following form (\cite{Imai}, \cite{Kawashima1}, \cite{xux}):
\begin{equation}\label{chushifangcheng}
\left\{
\begin{aligned}
& \partial_{t} \rho+\operatorname{div}(\rho \boldsymbol{u})=0,   \\[1mm]
& \rho \left( \partial_{t}\boldsymbol{u} + \boldsymbol{u} \cdot \nabla \boldsymbol{u} \right)   +   \nabla p  =  \mu' \Delta\boldsymbol{u} + (\lambda+\mu') \nabla(\operatorname{div} \boldsymbol{u}) + \rho_e \boldsymbol{E} + \boldsymbol{J} \times \boldsymbol{B},  \\[1mm]
& \rho \frac{\partial e}{\partial \theta} (\partial_t \theta + \boldsymbol{u} \cdot \nabla \theta) + \theta \frac{\partial p}{\partial \theta} \operatorname{div}\boldsymbol{u} = \operatorname{div}(\kappa \nabla \theta)  +  \mathbb{N}(\boldsymbol{u})  +  (\boldsymbol{J}  -  \rho_e \boldsymbol{u}) \cdot (\boldsymbol{E} + \boldsymbol{u}\times\boldsymbol{B}),   \\[1mm]
& \varepsilon \partial_{t} \boldsymbol{E}  -  \frac{1}{\mu_0}\operatorname{curl}\boldsymbol{B}+\boldsymbol{J}=0,  \\[1mm]
& \partial_{t} \boldsymbol{B}  +  \operatorname{curl} \boldsymbol{E}=0,   \\[1mm]
&\partial_t \rho_e + \operatorname{div}\boldsymbol{J} = 0, \quad \varepsilon \operatorname{div}\boldsymbol{E} = \rho_e, \quad \operatorname{div}\boldsymbol{B} = 0,
\end{aligned}\right.
\end{equation}
where $(\boldsymbol{x}, t)\in\mathbb{R}^{3}\times\mathbb{R}_{+}$.
Here, $\rho(\boldsymbol{x},t)>0$ denotes the mass density,
$\boldsymbol{u}=\left(u_{1}, u_{2}, u_{3}\right)\in\mathbb{R}^{3}$ the fluid velocity,
$ \theta(\boldsymbol{x},t) > 0 $ the absolute temperature,
$\boldsymbol{E}=\left(E_{1}, E_{2}, E_{3}\right)\in\mathbb{R}^{3}$ the electric field,
$\boldsymbol{B}=\left(B_{1}, B_{2}, B_{3}\right)\in\mathbb{R}^{3}$ the magnetic field,
and $ \rho_e (\boldsymbol{x},t) $ the electric charge density.
The pressure $ p $ and the internal energy $ e $ are expressed by the equations of states for polytropic fluids: $ p = R \rho \theta$,\; $e = \frac{R}{\gamma-1} \theta $.
$ R >0 $ is the gas constant and $ \gamma>1 $ is the adiabatic exponent.
$ \mathbb{N}(\boldsymbol{u}) $ in $ \eqref{chushifangcheng}_3 $ denotes the viscous dissipation function:
$ \mathbb{N}(\boldsymbol{u}) = \sum_{i,j=1}^3 \frac{\mu'}{2} \left( \frac{\partial u_i}{\partial x_j} + \frac{\partial u_j}{\partial x_i} \right)^2 + \lambda \left( \operatorname{div}\boldsymbol{u} \right)^2 $.
$\mu'$ and $\lambda$ are the viscosity coefficients of the fluid which satisfy $\mu'>0$ and $2\mu'+3\lambda >0$.
The electric current density $\boldsymbol{J}$ can be expressed by Ohm's law: $ \boldsymbol{J} = \rho_e \boldsymbol{u} + \sigma(\boldsymbol E + \boldsymbol u \times \boldsymbol B) $.
$\sigma>0$ denotes the electic conductivity coefficient.
The heat conductivity coefficient $\kappa$ in $\eqref{chushifangcheng}_3$ and the magnetic permeability $\mu_0$ in $\eqref{chushifangcheng}_4$ are positive constants.
Finally, $\varepsilon>0$ in $\eqref{chushifangcheng}_4$ is the dielectric constant.


There is quite limited mathematical progress for the original nonlinear system since as pointed out by Kawashima in \cite{Kawashima1}, the system $\eqref{chushifangcheng}$ is neither symmetric hyperbolic nor strictly hyperbolic.
Because of this, the classical local well-posedness theorem (cf. \cite{Kato})
is invalid to the system $\eqref{chushifangcheng}$.
In addition, the tightly coupled hydrodynamic and electrodynamic effects produce strong nonlinearities, which leads to many difficulties.
Owing to the mathematically complicate structure of the original nonlinear system $\eqref{chushifangcheng}$,
some simplified models are derived according to the actual physical application.
As it was pointed out by Imai in \cite{Imai}, the assumption that the electric charge density $\rho_{e} \simeq 0$ is physically beneficial to the research of plasmas.
Here, we would mention that the quasi-neutrality assumption $\rho_{e} \simeq 0$ is different from the assumption of exact neutrality $\rho_{e} = 0$ since the latter would lead to the superfluous condition $\operatorname{div}\boldsymbol{E}=0 $.
According to this quasi-neutrality assumption, we can eliminate the terms involving $\rho_e$ in the system $ \eqref{chushifangcheng} $ and derive the following simplified system (cf. \cite{xux}):
\begin{equation}\label{NSM}
\left\{
\begin{aligned}
& \partial_{t} \rho+\operatorname{div}(\rho \boldsymbol{u})=0,   \\[1mm]
& \rho \left( \partial_{t}\boldsymbol{u} + \boldsymbol{u} \cdot \nabla \boldsymbol{u} \right)   +   \nabla p  =  \mu' \Delta\boldsymbol{u} + (\lambda+\mu') \nabla(\operatorname{div} \boldsymbol{u})  +  \boldsymbol{J} \times \boldsymbol{B},  \\[1mm]
& \frac{R}{\gamma-1} \rho (\partial_t \theta + \boldsymbol{u} \cdot \nabla \theta) + p \operatorname{div}\boldsymbol{u} = \operatorname{div}(\kappa \nabla \theta)  +  \mathbb{N}(\boldsymbol{u})  +  \boldsymbol{J} \cdot (\boldsymbol{E} + \boldsymbol{u}\times\boldsymbol{B}),  \\[1mm]
& \varepsilon \partial_{t} \boldsymbol{E}  -  \operatorname{curl}\boldsymbol{B}+\boldsymbol{J}=0,   \\[1mm]
& \partial_{t} \boldsymbol{B}  +  \operatorname{curl} \boldsymbol{E}=0, \quad \operatorname{div}\boldsymbol{B} = 0,
\end{aligned}\right.
\end{equation}
with the electric current density $\boldsymbol{J}  =  \boldsymbol E + \boldsymbol u \times \boldsymbol B $. Here we take $\mu_0 = \sigma = 1$ for simplicity.
The system \eqref{NSM} is usually called the Navier-Stokes-Maxwell equations (NSM) because it is obtained from the Navier-Stokes equations coupling with the Maxwell equations through the Lorentz force.

We shall derive the one-dimensional motion on a spatial domain.
Without loss of generality,
consider a three-dimensional NSM flow with spatial variables $ \boldsymbol{x} = ( x_1 , x_2 , x_3 )$ which is moving only in the longitudinal direction $ x_1 $ and uniform in the transverse directions $( x_2 , x_3 )$.
This means that all the quantities $ (\rho, \boldsymbol{u}, \theta, \boldsymbol{E}, \boldsymbol{B}) $ appearing in \eqref{NSM} are independent of the second and the third component of space variable $ (x_1 , x_2 , x_3 ) $.
According to the location of the dependent spatial variable, for example $ x_1 $ (below $x_1$ will be denoted by $x$),
we set $ \boldsymbol{u}:=(u,\; 0,\; 0) $ for the sake of the hydrodynamic feature of this one-dimensional flow.
If the dependent spatial variable takes $ x_2 $ (respectively, $ x_3 $), we can set $ \boldsymbol{u}:=(0,\; u,\; 0) $ (respectively, $ \boldsymbol{u}:=(0,\; 0,\; u) $) correspondingly.
And the discussion is analogous.
For this reason we have only to consider the following nine different cases.

\vspace{2mm}
$\bullet$ Case 1:\quad $\boldsymbol{u}:=(u,\; 0,\; 0), \quad  \boldsymbol{E}:=(0,\; 0,\; E),  \quad \boldsymbol{B}:=(0,\; b,\; 0). $

\vspace{1mm}
$\bullet$ Case 2:\quad $\boldsymbol{u}:=(u,\; 0,\; 0), \quad  \boldsymbol{E}:=(0,\; E,\; 0),  \quad \boldsymbol{B}:=(0,\; 0,\; \tilde{b}).  $

Denote $ \mu:= \lambda + 2\mu' $.
For Case 1, by employing direct calculations for \eqref{NSM}, we can deduce the following one-dimensional system:
\begin{equation}\label{system-1}
\left\{
\begin{aligned}
   &\rho_t + (\rho u)_x = 0,   \\
   &\rho (u_t + uu_x ) + p_x = \mu u_{xx} - (E + u b)b,   \\
   &\frac{R}{\gamma-1} \rho (\theta_t  + u \theta_x ) + p u_x = \mu u_x^2  +  \kappa \theta_{xx}  + (E + u b)^2,   \\
   &\varepsilon E_t  - b_x + E + u b =0,  \\
   &b_t - E_x =0 .
\end{aligned}
\right.
\end{equation}
For Case 2, we introduce a new dependent variable: $ b = - \,\tilde{b} $. Then the resulting system is the same as \eqref{system-1}.
We should mention that Fan et al. in \cite{fanhu} chose the component of $ \boldsymbol{u} $, $ \boldsymbol{E} $ and $ \boldsymbol{B} $ the same as that of Case 1 and first obtained system \eqref{system-1}.

\vspace{2mm}
$\bullet$ Case 3:\quad $\boldsymbol{u}:=(u,\; 0,\; 0), \quad  \boldsymbol{E}:=(0,\; 0,\; E), \quad \boldsymbol{B}:=(0,\; 0,\; b). $

\vspace{1mm}
$\bullet$ Case 4:\quad $\boldsymbol{u}:=(u,\; 0,\; 0), \quad  \boldsymbol{E}:=(0,\; E,\; 0), \quad \boldsymbol{B}:=(0,\; b,\; 0) $.

For these two cases, we can deduce the following system:
\begin{equation}\label{system-2}
\left\{
\begin{aligned}
   &\rho_t + (\rho u)_x = 0,    \\
   &\rho (u_t + uu_x ) + p_x = \mu u_{xx} - ub^2,    \\
   &\frac{R}{\gamma-1} \rho (\theta_t  + u \theta_x ) + p u_x = \mu u_x^2  +  \kappa \theta_{xx}    +  E^2    + (u b)^2,    \\
   &\varepsilon E_t   +  E  =  0,  \quad  E_x = 0,     \\
   &b_x - ub  = 0 ,    \quad   b_t =0.
\end{aligned}
\right.
\end{equation}
From $\eqref{system-2}_4$ and $\eqref{system-2}_5$, we can obtain
\begin{equation*}
E = E(t) = E(0) \,{\mathop{\mathrm{e}}}^{-\frac{t}{\varepsilon}},\quad   b = b(x) = b(0) \,{\mathop{\mathrm{e}}}^{\int_0^x u(y, 0)\,\mathrm{d}y}.
\end{equation*}


\vspace{2mm}
$\bullet$ Case 5:\quad $\boldsymbol{u}:=(u,\; 0,\; 0), \quad  \boldsymbol{E}:=(0,\; 0,\; E), \quad \boldsymbol{B}:=(b,\; 0,\; 0). $

\vspace{1mm}
$\bullet$ Case 6:\quad $\boldsymbol{u}:=(u,\; 0,\; 0), \quad  \boldsymbol{E}:=(0,\; E,\; 0), \quad \boldsymbol{B}:=(b,\; 0,\; 0) $.

For these two cases, we can deduce the following system:
\begin{equation}\label{system-3}
\left\{
\begin{aligned}
&\rho_t + (\rho u)_x = 0,     \\
&\rho (u_t + uu_x ) + p_x = \mu u_{xx} ,    \\
&\frac{R}{\gamma-1} \rho (\theta_t  + u \theta_x ) + p u_x = \mu u_x^2  +  \kappa \theta_{xx}    +  E^2 ,    \\
& E b=0,   \\
& \varepsilon E_{t}+E=0, \quad E_{x}=0,   \\
& b_{t}=0,    \quad   b_x=0 .
\end{aligned}
\right.
\end{equation}
$\eqref{system-3}_5$ and $\eqref{system-3}_6$ lead to
$$ E=E(t)=E(0) \,{\mathop{\mathrm{e}}}^{-\frac{t}{\varepsilon}}, \quad  b = \mathrm{constant}. $$
The equation $\eqref{system-3}_4$ permits at least one zero-solution, i.e.
$$ E=E(t)=E(0) \,{\mathop{\mathrm{e}}}^{-\frac{t}{\varepsilon}}, \quad  b = 0 ; $$
or
$$ E = 0, \quad  b = \mathrm{constant}. $$


\vspace{2mm}
$\bullet$ Case 7:\quad $\boldsymbol{u}:=(u,\; 0,\; 0), \quad  \boldsymbol{E}:=(E,\; 0,\; 0), \quad \boldsymbol{B}:=(0,\; b,\; 0) $.

\vspace{1mm}
$\bullet$ Case 8:\quad $\boldsymbol{u}:=(u,\; 0,\; 0), \quad  \boldsymbol{E}:=(E,\; 0,\; 0), \quad \boldsymbol{B}:=(0,\; 0,\; b). $

For these two cases, we can deduce the following system:
\begin{equation}\label{system-4}
\left\{
\begin{aligned}
&\rho_t + (\rho u)_x = 0,     \\
&\rho (u_t + uu_x ) + p_x = \mu u_{xx}   - u b^2 ,    \\
&\frac{R}{\gamma-1} \rho (\theta_t  + u \theta_x ) + p u_x = \mu u_x^2  +  \kappa \theta_{xx}    +  E^2    + (ub)^2 ,    \\
& E b=0,  \quad \varepsilon E_{t}+E=0,   \\
& b_x-ub=0, \quad  b_{t}=0.
\end{aligned}
\right.
\end{equation}
By a reasoning similar to the above, we obtain that
$$E = E(x,t) = E(x,0) \,{\mathop{\mathrm{e}}}^{ -\frac{t}{\varepsilon}},\quad b = 0 ; $$
or
$$ E = 0,  \quad   b = b(x) = b(0) \,{\mathop{\mathrm{e}}}^{\int_0^x  u(y,0) \,\mathrm{d}y} . $$


\vspace{2mm}
$\bullet$ Case 9:\quad $\boldsymbol{u}:=(u,\; 0,\; 0), \quad  \boldsymbol{E}:=(E,\; 0,\; 0),  \quad  \boldsymbol{B}:=(b,\; 0,\; 0). $

For this case, we can deduce the following system:
\begin{equation}\label{system-5}
\left\{\begin{aligned}
&\rho_t + (\rho u)_x = 0,     \\
&\rho (u_t + uu_x ) + p_x = \mu u_{xx}  ,    \\
&\frac{R}{\gamma-1} \rho (\theta_t  + u \theta_x ) + p u_x = \mu u_x^2  +  \kappa \theta_{xx}    +  E^2 ,    \\
& \varepsilon E_{t}+E=0,    \\
& b_{t}=0, \quad   b_x = 0.
\end{aligned}\right.
\end{equation}
From $\eqref{system-5}_4$ and $\eqref{system-5}_5$, we can obtain
$$E = E(x,t) = E(x,0) \,{\mathop{\mathrm{e}}}^{ -\frac{t}{\varepsilon}},\quad b = \mathrm{constant} . $$





As a result, for the above-mentioned cases, we obtain five 1-D compressible non-isentropic models: \eqref{system-1}-\eqref{system-5}.
We can see that system \eqref{system-1} (i.e., system \eqref{yuanfangcheng}) includes the electrodynamic effects into the dissipative structure of the hydrodynamic equations and turns out to be more complicated than that in the other four models.
Moreover, it may be interesting to note that the electromagnetic fields $ E $ and $ b $ in systems \eqref{system-2}-\eqref{system-5} can in principle be determined explicitly by the dielectric constant $ \varepsilon $, the boundary data for $ b $ at $ x = 0 $, and the initial data for $ E $ and $u$.

Further discussion about these five one-dimensional models in mathematics and physics would be meaningful and interesting.



\section*{Acknowledgments}


\noindent The research was supported by the National Natural Science Foundation of China $\#$11771150, 11831003 and Guangdong Basic and Applied Basic Research Foundation $\#$2020B1515310015. 


\vspace{4mm}

\end{spacing}
\end{document}